\documentclass[12pt]{amsart}

\usepackage{latexsym}
\usepackage{amscd, amsfonts, mathrsfs, amsmath, amssymb, amsthm, mathtools}
\usepackage{baskervald}

\usepackage[color,arrow,matrix,curve]{xy}

\usepackage[colorlinks=true,
linkcolor=black,
citecolor=black,
filecolor=black,
urlcolor=black,
bookmarks=true,
bookmarksopen=true,
bookmarksopenlevel=3,
plainpages=false,
pdfpagelabels=true]{hyperref}

\usepackage{cleveref}
\usepackage{wasysym}

\newtheorem{thm}{Theorem}[section] 
\crefname{thm}{theorem}{theorems}

\crefname{algo}{algorithm}{algorithms}

\crefname{claim}{claim}{claims}

\crefname{conj}{conjecture}{conjectures}
\newtheorem{cor}[thm]{Corollary}
\crefname{cor}{corollary}{corollaries}
\newtheorem{defn}[thm]{Definition}
\crefname{defn}{definition}{definitions}

\crefname{exer}{exercise}{exercises}

\crefname{Ex}{example}{examples}
\newtheorem{lemma}[thm]{Lemma}
\crefname{lemma}{lemma}{lemmas}
\newtheorem{prop}[thm]{Proposition}
\crefname{prop}{proposition}{propositions}

\crefname{prob}{problem}{problems}

\crefname{rem}{remark}{remarks}

\crefname{notation}{notation}{notation}

\crefname{ques}{question}{questions}

\def\ra{\rightarrow}

\def\Stab[#1]{\text{Stab}_G \left( #1 \right)}
\def\ZZ{\mathbb{Z}}

\def\ed[#1][#2]{\text{ed}_{#1} \left( #2 \right)}
\def\cd[#1]{\text{cd}_k \left( #1 \right)}
\def\cd[#1][#2]{\text{cd}_{#1} \left( #2\right)}
\def\edp[#1]{\text{ed}_k\left( #1 ; p \right)}

\def\O{\mathcal{O}}

\def\c{\mathcal{c}}

\def\ZZ{\mathbb{Z}}

\def\HGm[#1]{H^2\left( #1_{ét}, \Gm \right)}

\def\Brone[#1][#2]{\text{Br}_1 \left( #1\right)_{#2}}
\def\Brone[#1]{\text{Br}_1 \left( #1\right)}

\def\tor[#1][#2]{\prescript{}{#1}{#2}}

\def\Gm{\mathbb{G}_m}
\def\etale{\'etale\ }
\def\Poincare{Poincar\'e\ }

\newcommand{\kbar}{\overline{k}}

\newcommand{\Ebar}{\overline{E}}
\newcommand{\Hom}{\text{Hom}}

\def\xcoor[#1]{\mathbf{x}\left( #1 \right)}
\def\ycoor[#1]{\mathbf{y}\left( #1 \right)}
\renewcommand{\phi}{\varphi}

\DeclareMathOperator{\Gal}{Gal}
\DeclareMathOperator{\Br}{Br}
\DeclareMathOperator{\Spec}{Spec}
\DeclareMathOperator{\Pic}{Pic}
\DeclareMathOperator{\End}{End}
\DeclareMathOperator{\Div}{Div}
\DeclareMathOperator{\Ext}{Ext}
\DeclareMathOperator{\GL}{GL}

\DeclareMathOperator{\PGL}{PGL}
\DeclareMathOperator{\stab}{Stab}

\DeclareMathOperator{\Prin}{Prin}
\DeclareMathOperator{\id}{id}
\DeclareMathOperator{\Proj}{Proj}
\DeclareMathOperator{\Jac}{Jac}

\pagestyle{plain}
\setlength{\oddsidemargin}{0pt}
\setlength{\evensidemargin}{0pt}
\setlength{\topmargin}{-.5in}
\setlength{\textheight}{9in}
\setlength{\textwidth}{6.5in}

\title{A Moduli Interpretation of Untwisted Binary Cubic Forms} 
\author{Rajesh S. Kulkarni and Charlotte Ure} 
\date{}

\begin{document}

\begin{abstract}
	We give a moduli interpretation to the quotient of (nondegenerate) binary cubic forms with respect to the natural $\GL_2$-action on the variables. In particular, we show that these $\GL_2$ orbits are in bijection with pairs of $j$-invariant $0$ elliptic curves together with $3$-torsion Brauer classes that are invariant under complex multiplication. The binary cubic generic Clifford algebra plays a key role in the construction of this correspondence. 
\end{abstract}

\maketitle

\section*{Introduction}
\label{sect:introduction}

The subject of homogeneous forms has fascinated mathematicians for centuries as it relates various branches such as algebraic geometry and number theory. Our particular interest is in binary forms which have been studied in great detail.  For example, the invariants and covariants of binary forms of degree $3$ and $4$ were classically described by Schur et al. \cite{schur}. Eisenstein also investigated binary cubic forms (over ${\mathbb Z}$) by constructing a finite map to the space of quadratic forms. He counted the number of elements in a fiber as a step towards counting the number of representations of a positive integer as a given cubic form  \cite{Eisenstein}. Binary forms of higher degree were studied, for example, by Franklin and Sylvester \cite{Sylvester}.

Binary cubic forms over ${\mathbb Z}$ have been of great interest in number theory in relation to counting cubic extensions of ${\mathbb Q}$. The idea is as follows: A cubic field extension of ${\mathbb Q}$ is generated by attaching a root of a polynomial $f(x) \in {\mathbb Q}[x]$. However, different polynomials may generate the same field. For example, the leading coefficient plays no role in generating the cubic field and the polynomial obtained from a generating polynomial via a linear transformation of the variable also generates the same field. In this vein, a result of Davenport \cite{Davenport1,Davenport2} gives an asymptotic expression for the number of cubic fields as the discriminant approaches infinity.

In recent times, binary cubic forms over ${\mathbb Z}$ were studied by various authors: Bhargava-Gross \cite{BhargavaGross}, Gan-Gross-Savin \cite{Gan-Gross-Savin:Fourier-Coefficients-of-modular-forms}, and Wood \cite{Wood:Rings-and-ideals}. The main goal of these works is to use the {\it moduli space} of binary cubic forms to count the number of cubic field extensions of ${\mathbb Q}$. These works begin with the parameter space of all nondegenerate binary cubic forms, and then consider the quotient by $\GL_2$. The action of $\GL_2$ is twisted by the determinant to account for the fact that different polynomials may generate the same cubic field.

In this article, we investigate the natural question: Can we describe the moduli space obtained as the quotient of the space $U$ of (nondegenerate) binary cubic forms under the natural {\it untwisted} action of $\GL_2$? This raises other related questions: Does the family of genus 1 curves over $U$ descend to this quotient? What about the associated family of Jacobians of these curves? We study these questions over any base field of characteristic different from $2$ and $3$ containing a primitive third root of unity $\omega$. 

It turns out that there is a unifying theme that ties up these aspects. There is a noncommutative algebra, called the {\it generic Clifford algebra} that acts as the universal algebra where the forms are linearized. The generic Clifford algebra $A_{n, d}$ of degree $d$ in $n$ variables was introduced by Chan, Young, and Zhang in \cite{Chan-Young-Zhang:Discriminant-formulas-and-applications}. It is the quotient of the tensor algebra $TV$ given by 
\[
A_{n,d} = \frac{TV}{\left< \left[ u^d, v \right] : u,v \in V \right> },
\]

\noindent where $V$ is a $k$-vector space of dimension $n$ and $\left[ u^d, v \right]$ denotes the commutator. The algebraic (ring-theoretic) properties of this algebra have been studied in some cases. For example, when $d=2$, this algebra is a connected, graded, Artin-Schelter regular, Auslander regular, Cohen Macauly noetherian domain \cite{Chan-Young-Zhang:Discriminant-formulas-and-applications}. 

Our interest is in the binary cubic case ($n = 2, d= 3$). The algebra $A = A_{2,3}$ is called the \textit{binary cubic generic Clifford algebra}. It is easy to see that the algebra $A$ can be described by generators and relations as

\[A = k \left< x,y : x^3y = yx^3, xy^3 = y^3 x, x^2 y^2 + (xy)^2 = y^2 x^2 + (yx)^2 \right>. \]

This algebra has been studied in the context of the classification of Artin-Schelter regular algebras. Wang and Wu showed that it is an Artin-Schelter regular, strongly noetherian, Auslander regular, Cohen Macauly algebra of global dimension $5$ \cite{Wang-Wu:AclassofASregularalgebrasofdimensionfive}. Its representation theoretic properties were explored by Wang and Wang in \cite{Wang-Wang:A-note-on-generic-Clifford-algebras-of-binary-cubic-forms}. We explore other algebraic properties of $A$ in \Cref{algebraic}. In particular, we show that the center of $A$ is isomorphic to the affine coordinate ring of a relative projective curve over $\mathbb{A}^4$ that is elliptic over a Zariski open dense subset $U$. This subset $U$ corresponds to the set of nondegenerate binary cubic forms over $k$. 

One of our main results is the following combination of \Cref{thm:ADelta-is-Azumaya} and \Cref{thm:Brauer-class-nontrivial} that relates the algebra $A$ to the moduli problem discussed above: 
\begin{thm}\label{thm1} 
	The binary cubic generic Clifford algebra $A$ defines a Brauer class over a relative elliptic curve over $U$. The associated Brauer class is not trivial for any base field $k$. 
\end{thm}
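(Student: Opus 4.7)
The proof naturally splits into two parts: showing that $A$, after inverting a discriminant-type element $\Delta$, defines an Azumaya algebra over the relative elliptic curve $\mathcal{E} \to U$, and then showing that the resulting Brauer class is non-trivial for every base field $k$.

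For the Azumaya property, the plan is to leverage the structure results of \Cref{algebraic}: the center $Z(A)$ is the coordinate ring of the fibration $\mathcal{E} \to U$, and $A$ is a finitely generated module over it. Since $A$ is Auslander regular, Cohen--Macaulay, and strongly Noetherian by \cite{Wang-Wu:AclassofASregularalgebrasofdimensionfive}, it is reflexive over its center; over the regular two-dimensional scheme $\mathcal{E}|_U$, reflexivity plus constant generic rank forces local freeness away from a codimension-two locus, which in this setting should be cut out by a suitable discriminant $\Delta$. One then checks that the generic rank is $9 = 3^2$ and that $A \otimes_{Z(A)} k(\mathcal{E})$ is a central simple algebra of degree $3$, for instance by exhibiting an explicit cyclic presentation coming from the defining relations $x^3y = yx^3$ and $xy^3 = y^3x$ (so that $x^3$ and $y^3$ are central). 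The standard criterion $A \otimes_{Z(A)} A^{\mathrm{op}} \xrightarrow{\sim} \End_{Z(A)}(A)$ then upgrades central simple-ness at the generic point to the Azumaya property on $\mathcal{E}|_U[\Delta^{-1}]$.

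For non-triviality, I would argue directly at the generic fiber $E_\eta$ of $\mathcal{E} \to U$, which is an elliptic curve over the function field $k(U)$. The key advantage is that $k(U)$ has positive transcendence degree over the prime field, so $\Br(E_\eta)$ is large regardless of whether $k$ is finite, a number field, or otherwise. The goal is to exhibit a concrete cohomological obstruction --- for instance compute a residue of $[A]$ along a suitable divisor, or evaluate $[A]$ against a $3$-torsion point of $E_\eta$ via the Weil pairing --- and show it is non-zero by reading off an explicit symbol $(f,g)_\omega$ from the presentation of $A$. A cleaner alternative is contradiction: if $[A]|_{E_\eta}$ were trivial, then $A \otimes_{Z(A)} k(\mathcal{E})$ would split as $M_3(k(\mathcal{E}))$, and reflexivity would force $A$ itself to be isomorphic to the endomorphism algebra of a rank-$3$ reflexive $Z(A)$-module. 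This can be ruled out by showing that such an $A$ would then have a large commutative subalgebra which is absent from the explicit presentation.

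The main obstacle is precisely the uniformity in $k$. The usual shortcut --- specializing to an interesting elliptic curve over $\overline{k}$ and invoking a known non-trivial class --- fails when $k$ is finite, since $\Br(E) = 0$ for elliptic curves over finite fields, so every specialization kills the class. Consequently, the computation of the obstruction must be carried out over the generic elliptic curve $E_\eta/k(U)$ and packaged in a form that is visibly independent of the arithmetic of $k$; for example by producing a nontrivial symbol whose non-vanishing reduces to the non-vanishing of a specific polynomial in the coefficients of the universal binary cubic form, a statement which holds over every field of characteristic $\neq 2,3$ containing $\omega$.
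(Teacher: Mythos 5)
Your high-level decomposition matches the paper's, and your observation about uniformity in $k$ is exactly right: since $\Br(E) = 0$ for elliptic curves over finite fields, one cannot specialize, and the obstruction must be computed over the generic fiber $E_\eta/k(U)$. But both halves of your proposal, as written, have concrete gaps.

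For the Azumaya half, the assertion that $\mathcal{E}|_U$ is a \emph{regular two-dimensional scheme} is not correct: $U$ is an open dense subset of $\mathbb{A}^4$, so the relative elliptic curve $J$ over $U$ has dimension $5$. Reflexivity of $A$ over $Z = Z(A)$ therefore only buys local freeness in codimension one, not away from a codimension-two locus, and in particular does not force $A$ to be locally free (let alone Azumaya) on all of $J[\Delta^{-1}]$. Indeed, the paper later proves (using reflexivity plus a local analysis at each height-one prime of $Z$, including $(\Delta)$) only that $A$ is a \emph{maximal order}, which is strictly weaker than Azumaya. The paper's actual proof of the Azumaya property is geometric: on a degree-$3$ cover $V_i \to U_i$ that splits $\mathcal{C}$, one takes the Poincar\'e bundle $\mathcal{P}_i$ on $\mathcal{C}_i \times \Pic^3_{\mathcal{C}_i/V_i}$, pushes it forward to a rank-$3$ locally free sheaf, forms $\End((\pi_i)_*\mathcal{P}_i)$, descends these by \'etale descent to an Azumaya algebra $\mathcal{A}$ on $\Pic^3_{\mathcal{C}/U}$, and then constructs an explicit algebra isomorphism $A_\Delta \cong \mathcal{A}\otimes S$ on the complement of the $\Theta$-divisor by producing elements $\alpha_u^i, \alpha_v^i$ in $\End(P_i)$ satisfying the Clifford relations. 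Your ``upgrade central simpleness at the generic point to Azumaya'' step is precisely the step that has no content without additional input; the Poincar\'e bundle does that work.

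For the non-triviality half, you have the right target but not the key lemma. What makes the argument work independently of $k$ is Haile's criterion (the class of $A_f$ is trivial iff $w^3 = f(u,v)$ has a $k$-point) combined with the concrete statement that the generic plane cubic $\mathcal{C}_{k(U)}$ has \emph{no} $k(U)$-rational point, i.e. $h^3 = af^3 + bf^2g + cfg^2 + dg^3$ has no nontrivial polynomial solution in $k[a,b,c,d]$. The paper proves this by a short infinite-descent argument (reduce modulo $(b,c)$, then modulo $(c)$, each time forcing divisibility and lowering degree), and then uses injectivity of $\Br(J)\to\Br(J_{k(U)})$. Your proposal gestures at residues or a Weil-pairing computation, but the Azumaya class here is \emph{unramified} on $J$, so one-step residues vanish, and the ``large commutative subalgebra'' contradiction in your alternative is not substantiated. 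The elegant mechanism you are looking for is exactly the absence of a rational point on the generic curve, established by descent.
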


We will now describe how the binary cubic generic Clifford algebra relates to the $\GL_2$ action on binary cubic forms described above. The natural $\GL_2$-action on the two dimensional vector space $V$ extends to the algebra $A$. In particular, it induces an action on the center $Z(A)$. This action is compatible with the $\GL_2$-action on the affine space $\mathbb{A}^4$ and it preserves the open subset $U$ of nondegenerate binary cubic forms (see \Cref{sect:moduli}). Combining the binary cubic generic Clifford algebra with this $\GL_2$-action on nondegenerate binary cubic forms, we prove that the associated Brauer class descends to the quotient stack \Cref{prop:descend-alpha}. 

For our investigation of the moduli space of binary cubic forms, we use yet another algebra called {\it the generalized Clifford algebra} associated to a form $f$ \cite{Childs:Linearizing-of-n-ic-forms-and-generalized-clifford-algebras,Revoy:AlgebresdeCliffordetalgebresexterieures,Roby:AlgebresdeClifforddesformespolynomes}. Let $f$ be a binary homogeneous form of degree $3$ over a field $k$. The \textit{generalized Clifford algebra $A_f$ associated to $f$} is the quotient of the tensor algebra 
\[
A_f = \frac{TV}{\left<v^{3} - f(v) :  v \in V\right>}.
\]

\noindent This is a natural generalization of the classical Clifford algebra associated to a quadratic form. If $f$ is given as $f(u,v) = au^3 + 3bu^2v + 3cuv^2 + dv^3$, we can write the algebra $A_f$ using generators and relations as 

$$A_f = k\left< x,y : x^3 = a , x^2 y+ xyx + yx^2 = 3b, xy^2 + yxy+ y^2 x = 3c, y^3 = d \right>.$$

\noindent This algebra is an Azumaya algebra over its center, the complement of zero in the Jacobian of $w^3 - f(u,v)$ \cite{Haile:On-the-Clifford-Algebra-of-a-binary-cubic-form,Heerema:an-algebra-determined-by-a-binary-cubic-form}. Furthermore, the associated Brauer class extends to the Jacobian and is trivial if and only if the curve $w^3-f(u,v)$ admits a $k$-rational point \cite{Haile:WhenistheCliffordAlgebraofabinarycubicformsplit}. These works also give an explicit equation for the Jacobian as  $s^2 = \gamma^3 + \tfrac14 \Delta$, where $\Delta$ is the discriminant of the form $f$.

We continue with the discussion of the results of this article. We show that the action of $\GL_2$ on the set of nondegenerate binary cubic forms extends to an action on the center $Z(A)$ of $A$ (see \Cref{sect:moduli}). In \Cref{thm:Moduli-space}, we give an answer to the moduli problem of nondegenerate binary cubic forms with respect to (untwisted) $\GL_2$-equivalence. 

\begin{thm}\label{thm2} 
	Let $k$ be a field that contains a primitive third root of unity. There is a one-to-one correspondence between
$$
\left\{
\begin{gathered} 
\text{(untwisted) }
	\GL_2-\text{orbits}\\
	\text{of binary cubic forms over $k$}
\end{gathered}
\right\}
\overset{1:1}\longleftrightarrow 
\left\{
\begin{gathered} 
\text{$k$-isomorphism classes of pairs $(E,\alpha)$}\\
\text{of elliptic curves $E$ with $j$-invariant $0$ and}\\
\text{$3$-torsion Brauer classes $\alpha \in \Br(E)/\Br_0(E)$}\\
\text{ invariant under complex multiplication of $E$}
\end{gathered} 
\right\},
$$
where $\Br_0(E)$ denotes the constant Brauer classes that are in the image of the natural map $\Br(k) \rightarrow \Br(E)$. 
\end{thm}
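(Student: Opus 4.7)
The plan is to construct an explicit forward map $[f] \mapsto (E_f, \alpha_f)$ and verify it is a bijection onto the set on the right. Given a nondegenerate binary cubic form $f$, let $C_f$ denote the genus-one curve $w^3 = f(u,v)$, let $E_f$ be its Jacobian (with Weierstrass equation $s^2 = \gamma^3 + \tfrac{1}{4}\Delta(f)$), and let $\alpha_f \in \Br(E_f)/\Br_0(E_f)$ be the class of the generalized Clifford algebra $A_f$ after extending it to an Azumaya algebra on $E_f$ via Haile's theorem. Then $E_f$ has $j$-invariant $0$ (its Weierstrass equation has no $\gamma$-coefficient), and $\alpha_f$ is $3$-torsion because $A_f$ is Azumaya of degree $3$ over its center. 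To prove well-definedness on $\GL_2$-orbits, I would invoke the transformation law $\Delta(g \cdot f) = (\det g)^6 \Delta(f)$, which yields an explicit $k$-isomorphism $E_{g \cdot f} \xrightarrow{\sim} E_f$ by rescaling $\gamma$ and $s$; compatibility of the Brauer classes under this isomorphism (modulo $\Br_0$) follows immediately from \Cref{prop:descend-alpha}, which asserts that $\alpha$ descends to the quotient stack $[U/\GL_2]$.

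Next I would check invariance of $\alpha_f$ under complex multiplication. The curve $C_f$ carries a natural $\mu_3$-action by $w \mapsto \omega w$ whose quotient is $\mathbb{P}^1$, and this action descends to the complex multiplication $(\gamma, s) \mapsto (\omega\gamma, s)$ on $E_f$. Since $A_f$ is built functorially from $C_f$, the $\mu_3$-action lifts to an automorphism of the Azumaya algebra and so fixes its Brauer class, yielding CM-invariance. Together with the preceding paragraph, this shows the forward map indeed lands in the claimed set.

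For injectivity and surjectivity, given $(E, \alpha)$ on the right-hand side, I would recover a representative $f$ via the Weil-Ch\^{a}telet group. The Leray spectral sequence for $E \to \Spec k$ identifies the $3$-torsion in $\Br(E)/\Br_0(E)$ with a subgroup of $H^1(k, E)[3]$, parametrizing locally trivial $3$-torsion genus-one torsors for $E$. The central claim is that CM-invariance of $\alpha$ is equivalent to the corresponding torsor $C$ admitting a cyclic cubic presentation $w^3 = f(u,v) \to \mathbb{P}^1$ with $\Jac(C) \cong E$. From such a presentation, $f$ is determined up to the $\GL_2$-action on $(u,v)$ together with rescaling of $w$, the latter being absorbed by the $\Br_0$ quotient. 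That the two maps are mutually inverse then follows from Haile's identification of the Clifford algebra class with the given torsor, via the exact sequence above.

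The main obstacle is the characterization of CM-invariance as precisely the condition for a cyclic cubic presentation. Generic $3$-torsion genus-one torsors for $E$ need not be of the form $w^3 = f(u,v)$; only those whose associated Brauer class carries the $\mu_3$-Galois symmetry from CM do. To isolate the cyclic ones, I would translate CM-invariance of $\alpha$ into $\mu_3$-equivariance of a representing Azumaya algebra, and then apply descent along the quotient $C \to C/\mu_3 \cong \mathbb{P}^1$ to extract the binary cubic $f$. The generic Clifford algebra $A$ of \Cref{thm1} together with the $\GL_2$-equivariant descent of \Cref{prop:descend-alpha} should serve as the universal object tying these two descriptions together and enforcing the bijectivity.
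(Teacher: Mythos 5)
Your proposal follows essentially the same route as the paper: forward map via the Jacobian $E_f$ of $C_f : w^3 = f(u,v)$ together with the Clifford algebra class, well-definedness via the $\det^6$-scaling of the discriminant and \Cref{prop:descend-alpha}, and the reverse map via the Hochschild--Serre sequence, reading off from $\kappa(\alpha) \in H^1(k,E)$ a torsor that must be exhibited as a cyclic cubic $w^3 = f(u,v)$. You correctly isolate the crux---that $\theta$-invariance of the torsor class is what forces the $w^3 = f(u,v)$ presentation---but leave it as a sketch. The paper fills this in two ways: geometrically (\Cref{prop:X-with-auto}), the isomorphism $X \to \theta_*(X) \cong X$ gives an order-$3$ automorphism $\sigma$ of $X$ over $k$, and a careful eigenvalue analysis on $H^0(X,\mathcal{O}(1))$ (carried out in \Cref{sec:IsoClifford}) shows $\sigma$ acts with eigenvalues $1,1,\omega$, so the $2$-dimensional eigenspace gives the quotient map $X \to \mathbb{P}^1$; alternatively, it uses the isogeny $\lambda = \theta - [1]$ with kernel the $\theta$-fixed $3$-torsion, the $\lambda$-covering formalism of \Cref{prop:torsor-covering}, and the explicit classification of Haile--Han--Wadsworth. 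One small inaccuracy: rescaling $w$ is not ``absorbed by the $\Br_0$ quotient''---it corresponds to the scalar matrices in $\GL_2$ acting on $(u,v)$, so it is already part of the $\GL_2$-orbit rather than a separate ambiguity.
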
 
This correspondence can be described as follows: For a binary cubic form $f$, let $(E_f,\alpha_f)$ be the pair given by the Jacobian $E_f$ of the curve $w^3 - f(u,v)$, and the Brauer class $\alpha_f \in \Br(E_f)$ of the Clifford algebra associated to $f$. For the reverse direction, let $(E,\alpha)$ be a pair as above. Consider the short exact sequence 

$$\xymatrix{ 0 \ar[r]& \Br(k) \ar[r] & \Br(E) \ar[r] & H^1(k,E) \ar[r] & 0}$$ 

\noindent induced by the Hochschild-Serre spectral sequence. We compute the image of the Brauer class $\alpha$ in $H^1(k,E)$ (see \Cref{sect:moduli}). This image is a cyclic twist $X$ of $E$. Such twists (or rather their relative Brauer groups) were studied by Ciperiani and Krashen in \cite{Ciperiani-Krashen:Relative-Brauer-Groups-of-geneus-1-curves} and computed explicitly by Haile, Han and Wadsworth in \cite{Haile-Wadsworth-Han:Cyclic-Twists}. In particular, such a cyclic twist $X$ is of the form $w^3 - f(u,v)$ for some nondegenerate binary cubic form $f$. \\

There are many natural questions that can be pursued following this work. One natural avenue is to explore noncommutative algebraic interpretation of extension of results of this article to pairs of (all) ellipic curves and Brauer classes. As a first step in this program, the most natural path is to generalize this modular interpretation to pairs of $j$-invariant $1728$ and Brauer classes invariant under complex multiplication. More generally, the questions answered in this article can be raised for Jacobians of higher genus curves as well. In the case of CM Jacobians, 
there is a noncommutative analog the generic Clifford algebra of higher degree forms. There are multiple obstacles that only appear when $d=3$ and $n>2$ or $d>3$. 
To overcome this difficulty, Kulkarni studied a quotient of $A_f$, the reduced Clifford algebra, that is Azumaya over its center \cite{Kulkarni:the-extension-of-the-reduced-clifford-algebra-and-its-brauer-class, Kulkarni:On-the-Clifford-algebra-of-a-binary-form}. We anticipate that this quotient construction can be made universal and used in answering similar questions about CM Jacobians and CM invariant Brauer classes on them. \\ 

This paper is organized as follows: In \Cref{algebraic}, we explore algebraic properties of the binary cubic generic Clifford algebra $A$. In particular, we compute the center of $A$ explicitly and prove that $A$ is a free module of rank $18$ over a subring of its center. \Cref{sect:geometric} is devoted to a geometric construction of the Brauer class of the Clifford algebra that enables us to prove \Cref{thm1}. We conclude the section by showing that even though the entire algebra $A$ is not Azumaya over its center, it is a maximal order. In \Cref{sect:moduli}, we prove \Cref{thm2} and explain the details involved with the $\GL_2$-action on (nondegenerate) binary cubic forms. We connect the moduli space problem to the generic cubic Clifford algebra by proving that the Brauer class descends to the quotient stack (\Cref{prop:descend-alpha}). In \Cref{appendix} we give some additional background on coverings of elliptic curves that are essential to the moduli space correspondence. \\

\textbf{Acknowledgements. } The authors thank James Zhang for his original questions on the generic Clifford algebra which led to this article. The authors would also like to thank Quanshui Wu and Xingting Wang for pointing out the connection to the classification of AS-regular algebras of global dimension five. The second author also thanks Anthony V\'arilly-Alvarado for a useful conversation related to this project. The first author (RK) was partially supported by the NSF Grant DMS-1305377 at the beginning of this project.

\section{Ring Theoretic Properties of the Binary Cubic Generic Clifford Algebra}\label{algebraic}

In this computational section, we study algebraic and ring theoretic properties of the binary cubic generic Clifford algebra ${A}$. In particular, we show that results of Haile \cite{Haile:On-the-Clifford-Algebra-of-a-binary-cubic-form, Haile:WhenistheCliffordAlgebraofabinarycubicformsplit} and Hereema \cite{Heerema:an-algebra-determined-by-a-binary-cubic-form} concerning the Clifford algebra associated to a binary cubic form extend to the generic case. We compute the center $Z$ of $A$ in \Cref{thm:center-of-A} explicitly and prove that it is the affine coordinate ring of a relative curve over $\mathbb{A}^4$, that is elliptic over a Zariski-open subset. As part of this proof, we also show that $A$ is a free module of rank $18$ over a subring of its center and compute a basis in \Cref{thm:basisofA}. \\

Fix two positive integers $n$ and $d$ and assume that $d$ is prime to the characteristic of the field $k$. Let $V$ be a vector space of dimension $n$. 

\begin{defn} 
	The \emph{(generalized) Clifford algebra $A_f$ associated to the homogeneous form $f$ on $V$} is the quotient of the tensor algebra $TV$ by the ideal generated by elements of the form 
	$v^d - f(v)$ for all $v \in V$. 
\end{defn}

We will focus on the binary cubic case ($n=2$ and $d=3$). In this paper, we study the generic Clifford algebra. This algebra was first introduced by Chan, Young, and Zhang in \cite{Chan-Young-Zhang:Discriminant-formulas-and-applications}.

\begin{defn}
	The \emph{generic Clifford algebra of degree $d$ in $n$ variables $A_{n,d}$} is the quotient of the tensor algebra $TV$ by the ideal generated by elements of the form $[u,v^d]$ with $u,v \in V$. Here $[u, v^d] = uv^d - v^d u$.
\end{defn}

For any homogeneous form $f$ of degree $d$, the Clifford algebra associated to $f$ is a quotient of the generic Clifford algebra $A_{n,d}$. For any element $a$ in the generic Clifford algebra, we denote by $a_f$ the image in the Clifford algebra associated to $f$ under the quotient map $A \rightarrow A_f$. \\

Let $k$ be a field of characteristic different from $2$ and $3$ containing a primitive third root of unity $\omega$. We are interested in the binary cubic generic Clifford algebra 
$$A = A_{2,3} = k\left<x,y: x^3 y = yx^3,  xy^3 = y^3 x, x^2 y^2 + (xy)^2 = y^2 x^2 + (yx)^2 \right>.$$ 
Wang and Wu proved in \cite{Wang-Wu:AclassofASregularalgebrasofdimensionfive} that $A$ is Artin-Schelter regular, strongly noetherian, Auslander regular, and Cohen-Macauly. \\

We fix some special elements of the algebra, that will be used throughout this section:
\begin{itemize} 
	\item $\alpha = x^2 y + xyx + yx^2$,
	\item $\beta = xy^2 + yxy + y^2x$, 
	\item $\gamma = (xy)^2 - y^2 x^2 = (yx)^2 - x^2y^2$, and  
	\item $\delta = yx - \omega xy$. 
\end{itemize} 
Furthermore, let $\mathcal{S} = k[x^3,y^3,\alpha,\beta,\gamma]$. The main result of this section is the following theorem.  

\begin{thm}\label{thm:center-of-A}The center of $A$ is 
	$\mathcal{S}[\delta^3]= k \left[ x^3, y^3, \alpha,\beta, \gamma ,\delta^3\right] $. Additionally, the center is isomorphic to the affine coordinate ring of a relative quasi-projective curve over the affine four-space $\mathbb{A}^4$, that is elliptic over a Zariski-open subset $U$. This relative curve is described by the equation 
	\begin{align} \label{eqn:elliptic-curve-eqn}
	s^2 = \gamma^3 + \tfrac14 \Delta\left( x^3, \alpha, \beta,y^3\right),\end{align} 
	where $\Delta$ is the discriminant and $s= \delta^3 - \tfrac{ 3\omega(1-\omega)x^3 y^3 + (1+2w^2) \alpha \beta }2$. 
\end{thm}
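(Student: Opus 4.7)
The plan is to prove the theorem in four steps: centrality of the proposed generators, an explicit rank-$18$ basis of $A$ over $\mathcal{S}$, a classification of central elements using that basis, and finally the derivation of the elliptic-curve equation.

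First, I would verify that each of $x^3, y^3, \alpha, \beta, \gamma, \delta^3$ lies in $Z(A)$. For $x^3$ and $y^3$ this is immediate from the first two defining relations. For $\alpha$ and $\beta$ one checks $[\alpha,x]=[\alpha,y]=0$ (and similarly for $\beta$) by direct manipulation, the main tool being the third defining relation $x^2y^2+(xy)^2=y^2x^2+(yx)^2$. For $\gamma=(xy)^2-y^2x^2$, centrality follows from the same relation combined with the centrality of $x^3,y^3$; the two-sided expression $\gamma=(yx)^2-x^2y^2$ is helpful here. The centrality of $\delta^3=(yx-\omega xy)^3$ is the most delicate point: the particular cube root of unity $\omega$ is chosen precisely so that, after full expansion, the commutators $[\delta^3,x]$ and $[\delta^3,y]$ collapse, with all surviving terms expressible via $\alpha,\beta,\gamma$ and the previously established central generators.

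Next, I would establish that $A$ is free over $\mathcal{S}$ of rank $18$. Treating the three defining relations (together with the centrality of $\alpha,\beta,\gamma$, which provides further rewriting rules such as substitutions for $yx^2,\,y^2x$ modulo $\mathcal{S}$) as a rewriting system under a suitable degree-lexicographic order on monomials in $x,y$, a Bergman diamond-lemma argument shows that every element of $A$ reduces to a unique $\mathcal{S}$-linear combination of an explicit list of $18$ standard monomials, consistent with the Hilbert series of $A$ computed by Wang--Wu. With this basis in hand, the center is found by writing an arbitrary element $z=\sum_i p_i e_i$ with $p_i\in\mathcal{S}$ and imposing $[z,x]=[z,y]=0$; the resulting $\mathcal{S}$-linear system has solution space spanned by $1$ and $\delta^3$, giving $Z(A)=\mathcal{S}\oplus\mathcal{S}\cdot\delta^3=\mathcal{S}[\delta^3]$.

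It then remains to identify the single relation that $\delta^3$ satisfies over $\mathcal{S}$, i.e.\ to compute $\delta^6$ in the basis. Expanding $\delta^6$ as a noncommutative polynomial in $x,y$ and repeatedly applying the defining and derived identities writes it in the form
\[
\delta^6 \;=\; \bigl(3\omega(1-\omega)x^3y^3 + (1+2\omega^2)\alpha\beta\bigr)\,\delta^3 \;+\; P\!\left(x^3,y^3,\alpha,\beta,\gamma\right)
\]
for an explicit polynomial $P\in\mathcal{S}$; completing the square via $s=\delta^3-\tfrac12\bigl(3\omega(1-\omega)x^3y^3+(1+2\omega^2)\alpha\beta\bigr)$ then yields the stated equation $s^2=\gamma^3+\tfrac14\Delta(x^3,\alpha,\beta,y^3)$, where $\Delta$ is the classical discriminant of a binary cubic form. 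I expect this last step to be the main computational obstacle, as it requires handling all degree-six monomials in two noncommuting variables and matching coefficients against the discriminant polynomial. As a sanity check, the specialization $A\twoheadrightarrow A_f$ for $f=au^3+3bu^2v+3cuv^2+dv^3$ sends $(x^3,y^3,\alpha,\beta)$ to $(a,d,3b,3c)$, under which the formula must recover Haile's known equation for the center of $A_f$. The remaining geometric assertions --- that $\Spec Z(A)$ is a relative quasi-projective curve over $\mathbb{A}^4=\Spec k[x^3,y^3,\alpha,\beta]$, elliptic over the open locus $U$ where $\Delta\neq 0$ --- then follow fiber-wise from standard Weierstrass theory applied to the cubic-in-$\gamma$ equation.
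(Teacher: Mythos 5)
Your outline tracks the paper's proof quite closely: identify the central generators $x^3,y^3,\alpha,\beta,\gamma,\delta^3$; establish that $A$ is a free $\mathcal{S}$-module of rank $18$ with an explicit monomial basis; write an arbitrary central element in that basis, impose commutation with $x$ and $y$, and read off that the solution space is $\mathcal{S}\oplus\mathcal{S}\cdot\delta^3$; finally compute the monic degree-two relation $\delta^6 = \bigl(3\omega(1-\omega)x^3y^3 + (1+2\omega^2)\alpha\beta\bigr)\delta^3 + \gamma^3 - x^3\beta^3 - y^3\alpha^3 + \alpha^2\beta^2$ and complete the square. The one place where you take a genuinely different route is the rank-$18$ freeness step, and it is worth comparing. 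You propose a Bergman diamond-lemma/Gr\"obner argument, consistency-checked against the Wang--Wu Hilbert series. The paper instead proves generation by a direct reduction computation (showing each $b_ix$, $b_iy$ lies in the $\mathcal{S}$-span of the $b_i$), and proves $\mathcal{S}$-linear independence by specialization: pushing a purported relation $\sum\lambda_ib_i=0$ to $A_f$ for every nondegenerate form $f$, invoking Haile's basis result for $A_f$ over $k[\gamma_f]$ (extended to non-diagonal $f$ via a diagonalizing change of variables), then using transcendence of $\gamma_f$ and Zariski density of $U$ in $\mathbb{A}^4$ to force the $\mathcal{S}$-coefficients to vanish. Your route is plausible in principle but is asserted rather than carried out, and it hides a real subtlety: the diamond lemma naturally produces a $k$-basis of $A$ from a confluent rewriting system on $k\langle x,y\rangle$, whereas here you need freeness \emph{over the subring $\mathcal{S}$}, whose generators are themselves polynomials in $x,y$. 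Turning rules ``modulo $\mathcal{S}$'' into honest noncommutative reduction rules, resolving the overlap ambiguities, and then upgrading a $k$-basis statement to an $\mathcal{S}$-module basis statement is nontrivial and is exactly the work the paper avoids by delegating to Haile's classical specialization results and a density argument. You should also record, as the paper does, that $\gamma$ is not integral over $k[x^3,y^3,\alpha,\beta]$ (again by specialization to a diagonal $A_f$), so that $\mathcal{S}$ is actually a polynomial ring in five variables --- this is implicitly needed for the rank-$18$ freeness claim and for the final geometric interpretation of $\Spec Z$ as a hypersurface.
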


We identify $\mathbb{A}^4$ with the space of binary cubic forms by identifying a point $(a,b,c,d)$ in $\mathbb{A}^4(k)$ with the binary cubic form $f(u,v) =  au^3 + 3bu^2v + 3 cuv^2 + dv^3$. Under this correspondence $U$ is the affine subset of $\mathbb{A}^4$ given by nondegenerate forms. Thus \Cref{thm:center-of-A} implies that the center of $A$ defines a relative elliptic curve over the space of nondegenerate binary cubic forms. \\

The elements $x^3, y^3, \alpha,$ and $\beta$ are obviously elements of the center $Z$ of $A$. Furthermore, these elements are algebraically independent over $k$ and so $k\left[ x^3,y^3,\alpha,\beta \right] \subseteq Z$ is a polynomial ring in four variables. The following two lemmata show that 	$\mathcal{S}[\delta^3]= k \left[ x^3, y^3, \alpha,\beta, \gamma ,\delta^3\right] $ is a subring of the center $Z$. 

\begin{lemma}\label{lemma:c-in-center} The element 
	$\gamma= (xy)^2 -y^2x^2 = (yx)^2 -x^2y^2$ is in $Z$. Furthermore, $\gamma$ is not integral over $k\left[ x^3, y^3, \alpha, \beta \right]$. In particular, the subring $\mathcal{S} = k\left[ x^3, y^3, \alpha, \beta,\gamma\right]$ of $Z$ is isomorphic to the polynomial ring in five variables. 
\end{lemma}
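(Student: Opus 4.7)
The plan is to establish three claims in order: that the two presentations of $\gamma$ coincide, that $\gamma$ is central, and that $\gamma$ is algebraically independent from $x^3, y^3, \alpha, \beta$. The first is immediate, since $(xy)^2 - y^2x^2 = (yx)^2 - x^2y^2$ is just a rearrangement of the defining relation $x^2y^2 + (xy)^2 = y^2x^2 + (yx)^2$.

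For centrality, I would first verify the auxiliary fact that $\alpha, \beta \in Z$. A direct calculation gives $[\alpha,x] = yx^3 - x^3y = 0$ from $x^3y = yx^3$, while $[\alpha,y] = (x^2y^2 + (xy)^2) - (y^2x^2 + (yx)^2) = 0$ from the third defining relation; a symmetric computation settles $\beta$. With this in hand, I would compute $[\gamma, y]$ using both presentations of $\gamma$ and sum the two results. After cancelling the terms containing $x^3$ or $y^3$ (which are central), the sum collapses to $2[\gamma, y] = [\alpha, y^2]$, which is zero since $\alpha \in Z$. A mirror argument yields $2[\gamma, x] = [\beta, x^2] = 0$, and since $\mathrm{char}(k) \neq 2$ we obtain $\gamma \in Z$. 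The key manipulation here is to regroup the leftover terms, after adding the two commutator computations, into expressions of the form $\pm x^2(\beta - xy^2) \pm (\beta - y^2x)x^2$ (respectively with $\alpha$ and $y^2$), so that the centrality of $\alpha$ and $\beta$ immediately kills them.

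For algebraic independence, I would exploit the family of specializations $A \twoheadrightarrow A_f$. For any binary cubic $f(u,v) = au^3 + 3bu^2v + 3cuv^2 + dv^3$, this map sends $x^3, y^3, \alpha, \beta$ to the scalars $a, d, 3b, 3c \in k$. Suppose, for contradiction, that $\sum_i p_i(x^3, y^3, \alpha, \beta)\, \gamma^i = 0$ in $A$ for some polynomials $p_i$ not all zero. Specializing to a nondegenerate $f$ produces a polynomial relation with $k$-coefficients satisfied by $\gamma_f$ inside $Z(A_f)$. By the Haile--Heerema description cited in the introduction, $Z(A_f)$ is the affine coordinate ring of the elliptic curve $s^2 = \gamma^3 + \tfrac14\Delta(f)$ (minus the origin), on which $\gamma_f$ is a nonconstant regular function and therefore transcendental over $k$. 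Hence each $p_i$ must vanish at every point of the nondegenerate locus; as this locus is a dense open in $\mathbb{A}^4$, each $p_i$ is the zero polynomial, contradicting our assumption.

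Combining the transcendence of $\gamma$ over $k[x^3, y^3, \alpha, \beta]$ with the algebraic independence of $x^3, y^3, \alpha, \beta$ already recorded before the lemma, we conclude $\mathcal{S} = k[x^3, y^3, \alpha, \beta][\gamma]$ is a polynomial ring in five variables; in particular $\gamma$ is not integral over $k[x^3, y^3, \alpha, \beta]$. The most delicate step is the bookkeeping in the centrality reduction $2[\gamma, y] = [\alpha, y^2]$, which depends crucially on having both equivalent presentations of $\gamma$ available; the transcendence claim, by contrast, is a clean reduction to the known structure of the specialized Clifford algebras $A_f$.
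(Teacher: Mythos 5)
Your proof is correct, but both halves take a noticeably different path from the paper's.

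For centrality, the paper simply checks $x\gamma = \gamma x$ and $y\gamma = \gamma y$ directly in two lines, using the identity $(xy)^2x = x(yx)^2$ (a tautology of word concatenation) together with centrality of $x^3$ and $y^3$, and choosing a different one of the two presentations of $\gamma$ on each side of the equality. Your route through the identity $2[\gamma,y]=[\alpha,y^2]$ is valid (I checked: both sides equal $xyxy^2 + yx^2y^2 - y^2x^2y - y^2xyx$ after the $y^3$-terms cancel), but it is more roundabout, and it needs $\mathrm{char}(k)\neq 2$ at the final step, whereas the paper's verification is characteristic-free. Both arguments do crucially exploit the coincidence $(xy)^2-y^2x^2 = (yx)^2-x^2y^2$, so that is not a distinguishing feature of your version.

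For the second claim the two arguments diverge more substantively. The paper assumes integrality, deduces that $k[x^3,y^3,\alpha,\beta][\gamma]$ is a finite module, and then contradicts this by specializing to a \emph{single} nondegenerate diagonal form $f$, invoking Haile's Lemma~1.9 that $\gamma_f$ is transcendental over $k$; this never needs to vary $f$ or invoke density. You instead prove outright transcendence of $\gamma$ over $k[x^3,y^3,\alpha,\beta]$ by specializing a hypothetical relation $\sum p_i\gamma^i=0$ at \emph{every} nondegenerate $f$ and using Zariski density of $U$ in $\mathbb{A}^4$. This is actually a cleaner justification of the ``in particular'' clause, since $\mathcal{S}$ being a polynomial ring requires transcendence, not merely non-integrality. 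Two small cautions for your version: you should say explicitly that one may pass to $\overline{k}$ so that the $k$-points of $U$ are Zariski dense (the paper makes a similar reduction later, in the proof of \Cref{thm:basisofA}), and you should note that the Haile--Heerema description of $Z(A_f)$ that you cite is available for all nondegenerate $f$, not only diagonal ones, since your density argument ranges over all of $U$. With those two clarifications your proof stands.
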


\begin{proof}
	For the first part of the statement, it is sufficient to show that $\gamma$ commutes with $x$ and $y$, which can be seen by the following straightforward calculations:
	\begin{equation*} 
	\begin{gathered} 
	x \gamma = x (yx)^2 -  x x^2 y^2 = (xy)^2 x - y^2 x^2 x = \gamma x,\\
	y \gamma = y (xy)^2 - y y^2 x^2 =  (yx)^2y - x^2 y^2 y = \gamma y.
	\end{gathered} 
	\end{equation*} 
	For the second part of the statement, suppose that $\gamma$ is integral over $k\left[ x^3, y^3, \alpha, \beta\right].$ Then $k\left[ x^3, y^3, \alpha, \beta\right][\gamma]$ is a finitely generated module over $k\left[ x^3, y^3, \alpha, \beta\right]$ with generators $a_1, \ldots, a_n$. For any binary cubic form $f$, denote by $a_f$ the image of $a \in A$ under the quotient map $A \rightarrow A_f$. Then $k[\gamma_f]$ is generated by ${a_1}_f, \ldots {a_n}_f$, which is a contradiction to the fact that $\gamma_f$ is transcendental over $k$ for any nondegenerate diagonal form \cite[Lemma 1.9]{Haile:On-the-Clifford-Algebra-of-a-binary-cubic-form}.  
\end{proof}

\begin{lemma}\label{lem:delta3-in-center}
	Let $\delta = yx- \omega xy$, then $\delta^3$ is in the center of $A$.
\end{lemma}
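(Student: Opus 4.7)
The plan is to establish two twisted commutation relations between $\delta$ and the generators $x,y$, and then iterate them to conclude that the cube commutes with each generator. By direct expansion, using only the definitions of $\delta,\alpha,\beta$ and the identity $1+\omega+\omega^2=0$ (and no defining relation of $A$), one checks
\[
x\delta - \omega \delta x \;=\; -\omega \alpha, \qquad y\delta - \omega^2 \delta y \;=\; \beta.
\]
For instance, $x\delta = xyx - \omega x^2 y$ and $\delta x = yx^2 - \omega xyx$, so
\[
x\delta - \omega \delta x \;=\; (1+\omega^2)\,xyx - \omega x^2 y - \omega y x^2 \;=\; -\omega(x^2 y + xyx + yx^2) \;=\; -\omega \alpha,
\]
and the second identity is analogous.

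Before iterating, I would verify that $\alpha$ and $\beta$ lie in $Z(A)$. Commuting $\alpha$ past $x$ uses only $x^3 y = y x^3$; the non-trivial step, commuting $\alpha$ past $y$, reduces directly to the cubic relation $x^2 y^2 + (xy)^2 = y^2 x^2 + (yx)^2$, and the check for $\beta$ is symmetric. With centrality of $\alpha,\beta$ in hand, the twisted relations bootstrap cleanly. Starting from $x\delta = \omega \delta x - \omega \alpha$ and using that $\alpha$ commutes with everything,
\[
x\delta^2 \;=\; \omega \delta(x\delta) - \omega \alpha \delta \;=\; \omega^2 \delta^2 x - (\omega + \omega^2)\alpha \delta \;=\; \omega^2 \delta^2 x + \alpha \delta,
\]
and then
\[
x \delta^3 \;=\; \omega^2 \delta^2(x\delta) + \alpha \delta^2 \;=\; \omega^3 \delta^3 x - \omega^3 \alpha \delta^2 + \alpha \delta^2 \;=\; \delta^3 x.
\]
Running the identical argument starting from $y \delta = \omega^2 \delta y + \beta$ yields $y\delta^3 = \delta^3 y$, and since $x$ and $y$ generate $A$, this gives $\delta^3 \in Z(A)$.

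The only real obstacle is the bookkeeping of powers of $\omega$; the proof hinges on the two numerical identities $-\omega - \omega^2 = 1$ (which collapses the two error terms at the second step into the single central term $\alpha\delta$) and $\omega^3 = 1$ (which makes the error at the third step cancel exactly). Conceptually, $x$ and $y$ twist $\delta$ by $\omega$ and $\omega^{-1}$ modulo central corrections, so $\delta^k$ is central precisely when $3 \mid k$, and $k=3$ is the first nontrivial instance.
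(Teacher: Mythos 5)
Your proof is correct and takes essentially the same approach as the paper: both establish the twisted commutation relations (equivalent to $\delta x = \omega^2 x\delta + \alpha$ and $y\delta = \omega^2 \delta y + \beta$) and iterate them three times, exploiting centrality of $\alpha,\beta$ and $\omega^3=1$ so the correction terms cancel. Your write-up is slightly more careful in explicitly verifying that $\alpha$ and $\beta$ are central, a fact the paper asserts as obvious a few lines earlier.
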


\begin{proof}Direct computations show that $$\delta x = \omega^2 x\delta + \alpha \qquad \text{and} \qquad  y\delta = \omega^2 \delta y + \beta.$$ Therefore 
	$$\delta^3 x = \omega^2 \delta x \delta^2 + \alpha \delta^2 = \omega x^2 \delta x + \omega^2 \alpha \delta^2 + \alpha \delta^2 = x^3 \delta + \omega \alpha \delta^2 + \omega^2 \alpha \delta^2 + \alpha \delta^2 = x \delta^3.$$ Similarly also $\delta^3 y = y \delta^3$. 
\end{proof}

Haile proved in \cite[Lemma 1.6]{Haile:On-the-Clifford-Algebra-of-a-binary-cubic-form} that $A_f$ is free of rank 18 over the subring $k[\gamma_f]$ of its center, where is a diagonal nondegenerate binary cubic form. We recover this statement in the following theorem for the generic case. 

\begin{thm}\label{thm:basisofA}
	The algebra $A$ is a free module of rank 18 over the subring $\mathcal{S}$ of the center of $A$. A set of generators is given by the following list $b_0, \ldots, b_{17}$
	\begin{equation*}
	\begin{gathered}
	1, \\
	x, y,\\
	x^2, xy, yx,  y^2,\\
	x^2y, xy^2, y^2x, yx^2,\\
	x^2y^2, xyxy, xyx^2, y^2xy,\\
	x^2y^2x, xyxy^2,\\
	x^2y^2xy.
	\end{gathered}
	\end{equation*} 
\end{thm}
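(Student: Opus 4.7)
The plan is to split the theorem into showing that the 18 listed monomials (i) span $A$ as an $\mathcal{S}$-module, and (ii) are $\mathcal{S}$-linearly independent.

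For spanning, I would argue by induction on the length of a word in $x, y$, using the central elements and the defining relations as rewriting rules. Since $x^3, y^3 \in \mathcal{S}$, any subword $x^3$ or $y^3$ can be pulled past neighbouring letters and absorbed into an $\mathcal{S}$-coefficient. The identities $\alpha = x^2 y + xyx + yx^2$ and $\beta = xy^2 + yxy + y^2 x$, which define elements of $\mathcal{S}$, rewrite $xyx = \alpha - x^2 y - yx^2$ and $yxy = \beta - xy^2 - y^2 x$, eliminating the subwords $xyx$ and $yxy$. Finally, \Cref{lemma:c-in-center} places $\gamma \in \mathcal{S}$ and gives $xyxy = y^2 x^2 + \gamma$ and $yxyx = x^2 y^2 + \gamma$, eliminating those subwords in favour of $\gamma$-multiples of shorter words. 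Iterating these substitutions reduces every monomial to an $\mathcal{S}$-linear combination of words avoiding the six forbidden subwords $x^3, y^3, xyx, yxy, xyxy, yxyx$. A direct combinatorial enumeration of such reduced words shows that they are exactly the 18 listed monomials $b_0, \dots, b_{17}$, and that no reduced word of length $\geq 7$ exists.

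For $\mathcal{S}$-linear independence, my plan is a specialization argument to Haile's Clifford algebra. For a nondegenerate binary cubic form $f(u,v) = au^3 + 3bu^2v + 3cuv^2 + dv^3$, the quotient $A \twoheadrightarrow A_f$ restricts on $\mathcal{S}$ to a ring homomorphism $\sigma_f \colon \mathcal{S} \to k[\gamma_f]$ sending $(x^3, y^3, \alpha, \beta, \gamma) \mapsto (a, d, 3b, 3c, \gamma_f)$. Any relation $\sum_i s_i b_i = 0$ in $A$ with $s_i \in \mathcal{S}$ specializes to $\sum_i \sigma_f(s_i)\, b_{i,f} = 0$ in $A_f$, so if $\{b_{i,f}\}$ is a $k[\gamma_f]$-basis of $A_f$ then each $\sigma_f(s_i) = 0$. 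Since $\gamma_f$ is transcendental over $k$ \cite[Lemma 1.9]{Haile:On-the-Clifford-Algebra-of-a-binary-cubic-form}, this gives $s_i(a, d, 3b, 3c, T) \equiv 0$ in $k[T]$ for every such $f$, and Zariski density of the nondegenerate locus in $\mathbb{A}^4$ then forces $s_i = 0$ in $\mathcal{S}$.

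The main obstacle, and the technical heart of the argument, is that Haile's rank-$18$ freeness in \cite[Lemma 1.6]{Haile:On-the-Clifford-Algebra-of-a-binary-cubic-form} is stated only for diagonal nondegenerate forms $(b = c = 0)$, a codimension-two locus that is not Zariski dense in $\mathbb{A}^4$. To bridge this gap, I would extend Haile's freeness assertion to arbitrary nondegenerate $f$ via the $\GL_2$-action: after base change to $\bar k$, every nondegenerate binary cubic is $\GL_2$-equivalent to a diagonal form, and the $\GL_2$-action on $V$ lifts to isomorphisms of generalized Clifford algebras $A_f \cong A_{f'}$ compatible with the quotient maps from $A$, which transport the diagonal basis to a generating set for $A_{f'}$; combined with the spanning proved in (i), this forces rank-$18$ freeness of $A_f$ over $k[\gamma_f]$ for every nondegenerate $f$, and the specialization--density argument above then closes. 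A less form-dependent alternative is a Hilbert series calculation: using that $A$ is AS-regular of global dimension $5$ with three degree-$4$ relations, one writes down the Hilbert series of $A$ explicitly and matches it with the Hilbert series of a rank-$18$ free graded $\mathcal{S}$-module whose summand shifts equal the degrees of the $b_i$; equality of these series combined with spanning again implies freeness.
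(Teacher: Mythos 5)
Your linear-independence argument is essentially the paper's: specialize to $A_f$, use transcendence of $\gamma_f$ (\cite[Lemma 1.9]{Haile:On-the-Clifford-Algebra-of-a-binary-cubic-form}), and conclude by Zariski density of the nondegenerate locus. You also correctly flag the gap that Haile's freeness is stated only for diagonal forms; the paper closes that gap exactly along your route (a), by diagonalizing over $k(\sqrt{-108\Delta})$ and transporting the basis through the isomorphism of Clifford algebras --- this is \Cref{lemma:Cffree}. Your route (b), via Hilbert series, is a plausible alternative but not what the paper does.

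The spanning half, however, has a genuine gap. The set of words in $x,y$ avoiding the subwords $x^3, y^3, xyx, yxy$ (note $xyxy$ and $yxyx$ are already excluded once $xyx$ and $yxy$ are) is \emph{infinite}: for example $(x^2y^2)^n$ is reduced for every $n$. So your rewriting system does not terminate at $18$ monomials, and in particular $x^2y^2x^2$ is reduced under your rules yet does not appear among the $b_i$. Conversely, several of the listed $b_i$ --- $xyxy$, $xyx^2$, $xyxy^2$, $y^2xy$, $x^2y^2xy$ --- actually \emph{contain} your forbidden subwords, so the enumeration of normal forms cannot produce the stated list. The reductions you listed are correct relations but they are not enough; one needs additional identities in which $\gamma$ appears (for instance $xyxy = y^2x^2 + \gamma$ must be used in the opposite direction, and one must also reduce patterns like $x^2y^2x^2$ using $\beta$ together with $x^3\in\mathcal{S}$). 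The paper avoids setting up a confluent rewriting system altogether: it shows directly, for each $i$, that $b_i x$ and $b_i y$ lie in the $\mathcal{S}$-span of $\{b_0,\dots,b_{17}\}$, with an explicit calculation for the hardest case $b_{17}x$ that crucially invokes $\gamma$. You would need either that case-by-case closure argument, or a correctly oriented and provably confluent rewriting system whose normal forms actually coincide with the $b_i$.
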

Note that this basis of $A$ over $\mathcal{S} = k[x^3,y^3,\alpha,\beta,\gamma]$ is mapped to the basis of $A_f$ over $k[\gamma_f]$, for $f$ any diagonal form, that Haile describes in \cite[Lemma 1.6]{Haile:On-the-Clifford-Algebra-of-a-binary-cubic-form}. Before we proceed to the proof of the theorem, we need to show that this image also determines a basis for non-diagonal binary cubic forms.  

\begin{lemma}\label{lemma:Cffree}Let $f$ be a nondegenerate binary cubic form (not necessarily diagonal) over $k$ with discriminant $\Delta$ and suppose that $\sqrt{-108 \Delta} \in k$. Then $A_f$ is free of rank $18$ over $k[\gamma_f]$ with basis $\left\{(b_i)_f : 0 \leq i \leq 17\right\},$
	where $b_i$ is defined in \Cref{thm:basisofA} and $\left( b_i \right)_f$ denote the images of $b_i$ under the homomorphism $A \rightarrow A_f$. 
\end{lemma}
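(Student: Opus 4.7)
The plan is to reduce the statement to Haile's corresponding result for diagonal binary cubic forms via a $\GL_2(k)$-diagonalization of $f$.

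First, I would show that under the hypothesis $\sqrt{-108\Delta}\in k$, combined with the running assumption $\omega\in k$, $f$ is $\GL_2(k)$-equivalent to a diagonal form $g(u,v)=au^3+dv^3$. The discriminant condition forces the Galois group of the splitting field of $f$ to be a subgroup of $A_3\subseteq S_3$, and Kummer theory (using $\omega\in k$) identifies this splitting field as $k(\eta)$ with $\eta^3\in k$. One then builds $M\in\GL_2(k)$ sending the three roots of $f$ in $\PP^1(\kbar)$ to $\{\eta,\omega\eta,\omega^2\eta\}$, which are exactly the roots of a diagonal binary cubic. The identity $\Delta(f\circ M^{-1})=(\det M)^6\Delta(f)$ confirms the hypothesis persists under this change of variables.

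Second, the matrix $M$ extends to a $k$-algebra isomorphism $\phi\colon A_f\xrightarrow{\sim} A_g$ via the natural $\GL_2(k)$-action on $V$, sending the generators $x',y'$ of $A_f$ to $Mx,My\in V\subset A_g$. A direct expansion shows that $\phi(\gamma_f)=(\det M)^2\gamma_g$, so $k[\phi(\gamma_f)]=k[\gamma_g]$ as subrings of $A_g$. Hence the question of whether $A_f$ is free over $k[\gamma_f]$ with basis $\{(b_i)_f\}$ is equivalent, via $\phi$, to the question of whether $A_g$ is free over $k[\gamma_g]$ with basis $\{\phi((b_i)_f)\}$.

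Third, Haile's result (\cite[Lemma 1.6]{Haile:On-the-Clifford-Algebra-of-a-binary-cubic-form}) asserts that $A_g$ is free of rank $18$ over $k[\gamma_g]$ with basis $\{(b_i)_g\}$. Expanding each $\phi((b_i)_f)$ as a polynomial in the generators $x,y$ of $A_g$ and reducing via the relations $x^3=a$, $y^3=d$, $\alpha_g=0$, $\beta_g=0$, one obtains a transition matrix $T\in M_{18}(k[\gamma_g])$. This matrix decomposes block-wise according to the filtration by total degree in $x,y$, and in each block the leading term is the corresponding tensor/symmetric power of $M$, whose determinant is a nonzero power of $\det M$. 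The invertibility of $T$ then yields that $\{\phi((b_i)_f)\}$ is a $k[\gamma_g]$-basis of $A_g$, and therefore $\{(b_i)_f\}$ is a $k[\gamma_f]$-basis of $A_f$. The main obstacle lies in this third step: explicitly tracking the $\GL_2(k)$-transformation of each of the eighteen monomials and verifying that the reduction to the Haile basis preserves the block-triangular structure of $T$. This is routine but substantial bookkeeping; the conceptual content sits in the first two steps, and the conclusion then follows by linear algebra over the polynomial ring $k[\gamma_g]$.
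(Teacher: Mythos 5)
Your overall plan---reduce to the diagonal case via a $\GL_2(k)$-change of variables, invoke Haile's result for diagonal forms, and transport the basis back---matches the paper's strategy, and your steps 1 and 2 are correct in substance. You deduce $k$-diagonalizability from Kummer theory and the discriminant condition, while the paper instead writes down the diagonalizing substitution explicitly in terms of classical invariants $r,s,t,D$; both work, and your identity $\phi(\gamma_f)=(\det M)^2\gamma_g$ is consistent with the paper's computed $\Phi(\gamma_f)=4r^2D\gamma_g$, since for the paper's substitution $\det M = 2r\sqrt{D}$. The advantage of the explicit substitution is that it makes this constant, and hence $k[\phi(\gamma_f)]=k[\gamma_g]$, immediate.

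The real divergence is your step 3, and it is where the paper's approach is decisively lighter. You propose to expand each $\phi((b_i)_f)$ in the Haile basis, form an $18\times 18$ transition matrix $T\in M_{18}(k[\gamma_g])$, and argue invertibility via a block-triangular structure --- and you yourself flag this as "substantial bookkeeping." The paper skips this entirely: since $\{b_i\}$ generates $A$ over $\mathcal{S}$ (proved by direct calculation, independently of \Cref{lemma:Cffree}), the images $\{(b_i)_f\}$ automatically generate $A_f$ over $k[\gamma_f]$ because the surjection $A\twoheadrightarrow A_f$ carries $\mathcal{S}$ onto $k[\gamma_f]$. Once $A_f$ is known to be free of rank $18$ over $k[\gamma_f]$ (via Childs's isomorphism $A_f\cong A_g$ and Haile's rank computation for $A_g$), any $18$-element generating set is a basis, because a surjective endomorphism of a finitely generated module over a commutative ring is injective. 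No transition matrix is needed.

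Two cautions if you do want to carry out your step 3: the "block-triangular by degree" claim requires you to argue that reduction modulo the Clifford relations in $A_g$ never raises degree (true, but since $\gamma_g$ has degree $4$, the sub-diagonal entries of $T$ are genuinely polynomial rather than scalar, and the block structure is triangular only after filtering by degree); and the diagonal blocks are not literally tensor or symmetric powers of $M$ but representations on the spans of the Haile monomials of a fixed degree, which are subquotients of the tensor powers cut out by the degree-$3$ relations $x^2y+xyx+yx^2=0$ and $xy^2+yxy+y^2x=0$, so invertibility of each block is a claim that would need separate verification. None of this is wrong in spirit, but it is exactly the work the paper's dimension-counting argument makes unnecessary.
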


\begin{proof}
	Let $f(u,v) = a u^3 + b u^2 v + c uv^2 + d v^3$ be any nondegenerate binary cubic form with discriminant $\Delta$ such that $\sqrt{D} \in k$, where $D = -108 \Delta$. In this case $f$ is diagonalizable over $k$ via the linear transformation 
	$$u \mapsto (\sqrt{D} + s ) \tilde{u} + (\sqrt{D} -s ) \tilde{v} \qquad v \mapsto -r \tilde{u} + r \tilde{v}, $$where $r= \frac13ac-\frac19b^2$, $2s= ad-\frac{1}{9}bc$, $t= \frac13bd-\frac19c^2$, and $D = s^2 - rt$. The transformation results in a diagonal form $g(\tilde{u}, \tilde{v})$. By \cite[Proposition 1]{Childs:Linearizing-of-n-ic-forms-and-generalized-clifford-algebras}, $C_f$ is canonically isomorphic to $C_g$ via an isomorphism $\Phi$. Further, $A_g$ is a free module of rank $18$ over $k[\gamma_g]$ by \cite[Lemma 1.12]{Haile:On-the-Clifford-Algebra-of-a-binary-cubic-form}. Since $A_f$ and $A_g$ are isomorphic, $A_f$ is also a free module of rank $18$ over $k[\Phi^{-1}(\gamma_g)]$. Direct calculations shows that $\Phi(\gamma_f) = 4 r^2 D \gamma_g$ and so $k\left[\Phi^{-1}(\gamma_g)\right] = k\left[\gamma_f\right]$. This implies that $A_f$ is a free module of rank $18$ over $k[\gamma_f]$. As $\{(b_i)_f : 0 \leq i \leq 17\}$ is a generating set for $A_f$ over $k[\gamma_f]$ it has to be linearly independent by dimension counting.
\end{proof}

We are now ready to proceed to the proof of the theorem. 
\begin{proof}[Proof of Theorem \ref{thm:basisofA}]
	We will first show that $\{b_i: 0\leq i \leq 17 \}$ generate $A$ over $\mathcal{S}$. Remark that it is sufficient to show that for any $b_i$, the monomials $b_ix$ and $b_iy$ are in the module generated by $\{b_i: 0\leq i \leq 17 \}$. We will only include the case $b_{17} x$ as the other calculations are similar:
	\begin{align*}
	b_{17} x = x^2y^2xyx &= x^2 y \gamma + x^2yx^2y^2 \\
	&= \gamma x^2y +\alpha x^2y^2 - xyx^3y^2 -yx^4y^2\\
	&= \gamma x^2y + \alpha x^2y^2 - x^3y^3 x - x^3 yxy^2\\
	&= \gamma x^2y + \alpha x^2y^2 - x^3y^3 x - x^3 \beta y + x^3 xy^3 + x^3 y^2 xy\\
	&= \gamma x^2y + \alpha x^2y^2 - x^3 \beta y + x^3 y^2 xy.
	\end{align*}
	It remains to prove that the $b_i$ are linearly independent. From now on assume that $k$ is algebraically closed and suppose that there was some relation $0 = \sum_{i=0}^{17} \lambda_i b_i$ for some $\lambda_i \in \mathcal{S}$. For any nondegenerate binary cubic form $f$, we deduce a relation $\sum_{i=0}^{17} (\lambda_i)_f (b_i)_f$, with $\lambda_i \in k[\gamma_f]$. Since $k$ is assumed to be algebraically closed, $\sqrt{\Delta} \in k$, where $\Delta $ is the discriminant of $f$. By Lemma \ref{lemma:Cffree} the set $\{ b_i: 0 \leq i \leq 17\}$ is linearly independent over $k\left[\gamma_f\right]$, and thus $(\lambda_i)_f =0$. Since $\lambda_i \in \mathcal{S} = k[x^3,y^3,\alpha,\beta][\gamma]$, it is possible to write each $\lambda_i$ as a polynomial in $\gamma$, i.e. $\lambda_i = \sum_{j} \mu_{i,j} \gamma^j$ with $\mu_{i,j} \in k[x^3,y^3,\alpha,\beta]$. Then $0 = \left( \lambda_i\right)_f =  \sum_{j} (\mu_{i,j})_f \gamma_f^j$ for any nondegenerate $f$. By \cite[Lemma 1.9]{Haile:On-the-Clifford-Algebra-of-a-binary-cubic-form} the element $\gamma_f$ is transcendental over $k$ and therefore $(\mu_{i,j})_f = 0$. Since the set $U$ of nondegenerate polynomials is dense in $\mathbb{A}^4$, the polynomials $\mu_{i,j}$ must vanish on all of $\mathbb{A}^4$. In particular $\mu_{i,j} = 0$ in $A$ and thus each $\lambda_i$ is trivial as well. This finishes the proof. 
\end{proof}

This explicit description of $A$ as a module over a subring of its center now enables us to calculate the center explicitly. 

\begin{proof}[Proof of Theorem \ref{thm:center-of-A}]
	Recall that by Lemma \ref{lemma:c-in-center} and Lemma \ref{lem:delta3-in-center} the ring $\mathcal{S}[\delta^3]$ is a subring of the center $Z$ of $A$. It remains to show that this is all of $Z$. We first compute $\delta^3$ directly in terms of the basis given in \Cref{thm:basisofA}.  
	\begin{align*}
	\delta^3 =& (yx)^3 - \omega \left( (yx)^2 (xy) + (yx)(xy)(yx) + (xy)(yx)^2\right) \\
	&+ \omega^2 \left( (yx)(xy)^2 + (xy)(yx)(xy) + (xy)^2(yx) \right) - (xy)^3 \\
	=& \gamma  b_ 5 - \gamma b_4\\
	&- \omega \left( - \alpha \beta +3 cb_4 - \beta b_7 + 2\alpha b_8+ \alpha b_9 + \beta b_{10} + 3 b_{17} \right) \\
	&+ \omega^2 \left( 2\alpha \beta - 3 x^3y^3 - 3cb_4 + \beta b_7 - 2 \alpha b_8 - \alpha b_9 - \beta b_{10} - 3 b_{17}  \right)\\
	=& \gamma b_5 - \gamma b_4 + (- \omega - \omega^2 )\left( 3cb_4 - \beta b_7 + 2\alpha b_8 + \alpha b_9 + \beta b_{10} + 3 b_{17}\right)\\
	&+ (\omega+ 2 \omega^2) \alpha \beta - 3 \omega^2 x^3y^3 \\ 
	=& \gamma b_5 +2 \gamma b_4 - \beta b_7 + 2\alpha b_8 + \alpha b_9 + \beta b_{10} + 3 b_{17}+ (\omega+ 2 \omega^2) \alpha \beta - 3 \omega^2 x^3y^3. 
	\end{align*}
	Let $\tilde{a}\in Z$. By \cref{thm:basisofA} there exist unique $\lambda_i \in \mathcal{S}$ so that $\tilde{a} = \sum_{i=0}^{17} \lambda_i b_i$. Since $\tilde{a}$ is central, it has to commute with $x$ and $y$. By comparing the coefficients of $\tilde{a}x$ and $x\tilde{a}$, and $\tilde{a}y$ and $y\tilde{a}$, respectively, we see that any element in the center has to be of the form 
	\begin{align*} 
	\tilde{a} &= \nu + 2 \gamma \lambda b_4 + \gamma \lambda b_5 - \beta \lambda b_7 + 2 \alpha \lambda b_8 + \alpha \lambda b_9 + \beta \lambda b_{10} + 3 \lambda b_{17}\\
	&= \tilde{\nu} + \lambda \delta^3\end{align*} 
	for some $\lambda,\nu, \tilde\nu \in \mathcal{S}$. This concludes the proof that $\tilde{a} \in \mathcal{S}[\delta^3]$.\\
	
	For the second part of the statement, we verify with a direct computation that $\delta^3$ satisfies the equation $$\delta^6 = 3\omega(1-\omega)x^3 y^3 \delta^3 + (1+2w^2) \alpha \beta \delta^3  + \gamma^3 - x^3 \beta^3 - y^3 \alpha^3 + \alpha^2 \beta^2$$ 
	Denote $$s= \delta^3 - \tfrac{ 3\omega(1-\omega)x^3 y^3 + (1+2w^2) \alpha \beta }2.$$
	Then the center of $A$ is $\mathcal{S}[\delta^3 ] = \mathcal{S}[s]$ and 
	\begin{align*}
	s^2 =& \delta^6 -  3\omega(1-\omega)x^3 y^3\delta^3 - (1+2w^2) \alpha \beta \delta^3 + 
	\tfrac14 \left( \left(3\omega(1-\omega)x^3 y^3 + (1+2w^2) \alpha \beta \right)^2\right) \\
	=& \gamma^3 - x^3 \beta^3 - y^3 \alpha^3 + \alpha^2 \beta^2 + \tfrac14\left(  -27 x^6 y^6 + 18 x^3 y^3 \alpha \beta + - 3 \alpha^2 \beta^2 \right)  \\
	=& \gamma^3 - \tfrac{27}4 x^6 y^6 - x^3 \beta^3 - y^3 \alpha^3 + \tfrac{1}4 \alpha^2 \beta^2 + \tfrac{18}4 x^3 y^3 \alpha \beta \\
	=& \gamma^3 + \tfrac{1}4 \Delta (x^3, \alpha, \beta, y^3), 
	\end{align*}
	where $\Delta$ is the discriminant function as before. 
\end{proof}

\section{A Geometric Construction of the Brauer class} \label{sect:geometric}

Fix the bijection 
\begin{align*} 
	\mathbb{A}^4(k) &\longleftrightarrow \left\{ \text{ binary cubic forms over $k$ }\right\}
\end{align*} 
that identifies point $(a,b,c,d)$ with the binary cubic form $f(u,v) =  au^3 + 3bu^2v + 3 cuv^2 + dv^3$. We will use this identification throughout the upcoming section. The discriminant function $\Delta = \Delta (x^3, \alpha, \beta, y^3)$ defines the set of nondegenerate cubic forms $U = D(\Delta) \subset \mathbb{A}^4$. The affine coordinate ring of $U$ is denoted by $R = k\left[ x^3, \alpha, \beta, y^3 \right]_\Delta$. Denote by $Z$ the center of $A$ as calculated in \Cref{thm:center-of-A}. Let $A_\Delta = A \otimes_Z Z_\Delta$. The main goal of this section is the following theorem.

\begin{thm}\label{thm:ADelta-is-Azumaya} 
	$A_\Delta$ is an Azumaya algebra over its center.
\end{thm}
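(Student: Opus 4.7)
The plan is to treat $A_\Delta$ as the family of Clifford algebras $\{A_f\}_{f \in U}$ parametrized by nondegenerate binary cubic forms, and to upgrade the fiberwise Azumaya property of the individual $A_f$ (due to Haile) to a global Azumaya property for $A_\Delta$ via the critère de platitude par fibres.

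The first thing to check is that the fibers match up correctly. Let $\pi \colon \Spec Z_\Delta \to U$ be the morphism induced by $R \hookrightarrow Z_\Delta$. For a point $f \in U$ with residue field $\kappa(f)$, substituting the coordinates of $f$ into the equation $s^2 = \gamma^3 + \tfrac14\Delta$ of \Cref{thm:center-of-A} identifies $Z_\Delta \otimes_R \kappa(f)$ with the center $Z(A_f)$ of the Clifford algebra of $f$, that is, the complement of the origin in the Jacobian of $w^3 - f(u,v)$. Similarly, $A_\Delta \otimes_R \kappa(f)$ is identified with $A_f$, because the defining relations of $A$ specialize to those of $A_f$. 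Under these identifications, the inclusion $Z_\Delta \hookrightarrow A_\Delta$ base-changes at $f$ to $Z(A_f) \hookrightarrow A_f$, which is Azumaya of rank $9$ by Haile's theorem.

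Next, I would establish $Z_\Delta$-flatness of $A_\Delta$ by applying the critère de platitude par fibres (EGA IV, 11.3.10) to the composition $R \to Z_\Delta \to A_\Delta$. The two hypotheses are met: (i) $A_\Delta$ is $R$-flat, because by \Cref{thm:basisofA} the algebra $A$ is free of rank $18$ over $\mathcal{S} = k[x^3, y^3, \alpha, \beta, \gamma]$, hence $A_\Delta$ is free over $\mathcal{S}_\Delta = R[\gamma]$, and $R \hookrightarrow R[\gamma]$ is flat; (ii) each fiber $A_f$ is flat (indeed locally free of rank $9$) over $Z(A_f)$ by Haile's Azumaya theorem. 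Finite generation of $A_\Delta$ over the Noetherian ring $Z_\Delta$ is inherited from the $18$-element generating set $b_0, \dots, b_{17}$ of \Cref{thm:basisofA}, so the conclusion of the critère is that $A_\Delta$ is a finitely generated flat, and therefore projective, $Z_\Delta$-module of constant rank $9$.

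Finally, I would invoke the standard criterion that a finitely presented, faithfully flat algebra over a Noetherian commutative ring whose fibers at all points are central simple algebras is Azumaya. The fibers of $A_\Delta$ over arbitrary points of $\Spec Z_\Delta$ are central simple as further base changes of the Azumaya algebras $A_f$, so the criterion applies and $A_\Delta$ is Azumaya over $Z_\Delta$. The main obstacle in carrying this out is the middle step: one must verify the hypotheses of the fiberwise flatness criterion carefully, in particular that $Z_\Delta$ is $R$-flat, which holds because $Z_\Delta/R$ is a smooth affine elliptic fibration over $U$ by \Cref{thm:center-of-A}. Once this technical point is in place, Haile's classical theorem for individual forms supplies the essential input.
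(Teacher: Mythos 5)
Your proposal is correct, and it takes a genuinely different route from the paper. The paper proves Azumaya-ness by an explicitly geometric construction: it builds \Poincare bundles $\mathcal{P}_i$ on covers $\mathcal{C}_i \times \Pic^3_{\mathcal{C}_i/V_i}$, glues the sheaves $\End\bigl((\pi_i)_*\mathcal{P}_i\bigr)$ via \etale descent to an Azumaya algebra $\mathcal{A}$ on $\Pic^3_{\mathcal{C}/U}$, and then constructs an explicit isomorphism $\psi\colon A_\Delta \to \mathcal{A}\otimes S$ by matching the Clifford relations against endomorphisms arising from the $w$-multiplication on pushforwards of $\mathcal{P}_i$. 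You instead take a purely algebraic, fiberwise route: identify the fibers of $R\to Z_\Delta\to A_\Delta$ over points of $U$ with the pairs $Z(A_f)\hookrightarrow A_f$, cite Haile's theorem that each $A_f$ is Azumaya, push this to $Z_\Delta$-flatness via the crit\`ere de platitude par fibres (using the freeness of $A$ over $\mathcal{S}$ from \Cref{thm:basisofA} for $R$-flatness), and conclude by the standard fiberwise characterization of Azumaya algebras. This argument is sound and considerably shorter for the stated theorem, but two points deserve explicit mention. First, the flatness criterion requires the fiber condition at \emph{all} scheme-theoretic points of $\Spec R$, so you are using Haile's theorem over arbitrary base fields (including $k(U)$ at the generic point); this is fine, as Haile's results hold over any field of suitable characteristic, but it should be said. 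Second, the identification $Z_\Delta \otimes_R \kappa(f) \cong Z(A_f)$ is not a formal base-change statement (centers need not commute with base change); it relies on the explicit equality of both rings as $\kappa(f)[\gamma,s]/(s^2-\gamma^3-\tfrac14\Delta_f)$, which combines \Cref{thm:center-of-A} with Haile--Heerema's computation. The trade-off is that the paper's heavier geometric machinery is not merely proving Azumaya-ness for its own sake: the \Poincare-bundle description of the class is what feeds the subsequent identification $\kappa(\alpha_f) = [C_f]$ in \Cref{lemma:kappa-alpha-f-is-C-f} and the nontriviality and moduli arguments, so your streamlined proof would still need to be supplemented by that construction to support the rest of \Cref{sect:geometric} and \Cref{sect:moduli}.
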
  

We construct the Brauer class of $A_\Delta$ geometrically. This generalizes the construction of the Clifford algebra associated to a binary form given by Kulkarni in \cite{Kulkarni:On-the-Clifford-algebra-of-a-binary-form} and  \cite{Kulkarni:the-extension-of-the-reduced-clifford-algebra-and-its-brauer-class}. Many proofs we provide in this section are similar to the construction over a field. We include them here for completion. We begin by reviewing some facts about Jacobians and \Poincare bundles. 

\subsection{The \Poincare bundle} 

Let $\mathcal{C}$ be the nonsingular relative curve over $U$ defined by 
\begin{align*}
\xymatrix@C=-0.1em{\mathcal{C} \ar[d] & = \Proj \left( \frac{R[u,v,w]}{w^3 - x^3 u^3 - \alpha u^2v - \beta uv^2 - y^3 v^3}\right) \phantom{Pr}\\ U & = \Spec(R) = \Spec \left( k\left[ x^3,\alpha,\beta, y^3\right]_\Delta\right)  }\label{eqn:defnofC}
\end{align*}
By \cite[Theorem 8.1]{Milne:Jacobian-Varieties} the Jacobian scheme $J$ of $\mathcal{C}$ over $U$ exists and its fibers are connected. Furthermore, there is a morphism of functors $\Pic^0_{\mathcal{C}/U} \ra J$ so that 
(1)
for any $U$-scheme $T$ the map $\Pic^0_{\mathcal{C}/U}(T) \ra J(T)$ is injective and 
(2) it is an isomorphism whenever $\mathcal{C} \times_U T \ra T$ admits a section. 
Since $\mathcal{C}$ is nonsingular, $J$ is given by a family of Jacobian varieties \cite[page 193]{Milne:Jacobian-Varieties} and an affine equation for $J$ is  
\begin{align*}
s^2 &= \gamma^3 - \frac{27}4 x^6 y^6 + \frac14 \alpha^2 \beta^2 + \frac{18}4 x^3y^3 \alpha \beta - x^3\beta^3 - y^3 \alpha^3\\
&= \gamma^3 + \frac{1}{4} \Delta(x^3, \alpha, \beta, y^3).
\end{align*}
Comparing the above equation to the description of the center of $A$ in \cref{eqn:elliptic-curve-eqn}, we see that $J$ is the relative elliptic curve over $U$ whose coordinate ring is isomorphic to the center of $A_\Delta$. \\

We will now review some background on the \Poincare bundle which we will construct on a cover of $U$. For more background on see \cite[Ch. 8]{Bosch:Neron-Models}. Let $S$ be any scheme and $X$ an $S$-scheme. Denote by $p:X \ra S$ the structure morphism. Assume that $p_*(\O_X) = \O_S$ holds universally, i.e. after any base change, and suppose that $X$ admits a section $s: S \ra X$. A \emph{rigidified line bundle along $s$} is a pair $(\mathcal{L}, \alpha)$, where $\mathcal{L}$ is a line bundle on $X$, and $\alpha$ is an isomorphism $\O_S \ra s^*(\mathcal{L})$. Let $(P,s)$ be the functor from the category of $S$-schemes to the category of sets that assigns to an $S$-scheme $T$ the set of isomophism classes of line bundles on $X \times_S T$ that are rigidified along the induced section $s_T: T \ra X \times_S T$. This functor is canonically isomorphic to the relative Picard functor $\Pic_{X/S}$.  

\begin{prop}[{\cite[\S 8.2, Proposition 4]{Bosch:Neron-Models}}]
	Let $p: X \ra S$ be finitely presented and flat, and let $s: S \ra X$ be a section. Assume that $p_*\O_X =\O_S$ holds universally and that $\Pic_{X/S}$ is representable by a scheme. The \Poincare bundle $\mathcal{P}$ for $(X/S,s)$ has the following universal property: 
	For any $S$-scheme $S'$ and for any line bundle $L'$ on $X'=X \times_S S'$ which is rigidified along the induced section $s': S' \ra X'$, there exits a unique morphism $g: S' \ra \Pic_{X/S}$ such that $L'$ is isomorphic to the pull-back of $\mathcal{P}$ under the isomorphism $1_X \times g$ as rigidified line bundles. 
\end{prop}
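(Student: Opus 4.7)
The plan is to exhibit the \Poincare bundle $\mathcal{P}$ as the tautological rigidified line bundle corresponding under representability to the identity morphism $\id : \Pic_{X/S} \ra \Pic_{X/S}$, and then to read off the universal property via the Yoneda lemma. The first step is to introduce the rigidified Picard functor $\Pic_{(X/S,s)}$, which assigns to an $S$-scheme $T$ the set of isomorphism classes of pairs $(\L, \alpha)$, where $\L$ is a line bundle on $X \times_S T$ and $\alpha : \O_T \xrightarrow{\sim} s_T^*\L$ is a rigidification along the base-changed section $s_T : T \ra X \times_S T$. I would then show that the forgetful map $(\L,\alpha) \mapsto \L$ induces a canonical natural isomorphism $\Pic_{(X/S,s)} \cong \Pic_{X/S}$.

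The crux of that identification is the observation that, under the hypotheses $p_*\O_X = \O_S$ universally and the existence of $s$, rigidified line bundles have no nontrivial automorphisms: any such automorphism is a global unit on $X \times_S T$ restricting to $1$ along $s_T$, and the universal pushforward condition identifies global units on $X \times_S T$ with units on $T$, which must therefore equal $1$. This rigidity yields injectivity of $\Pic_{(X/S,s)} \ra \Pic_{X/S}$ directly, and surjectivity follows by fppf descent: a class in $\Pic_{X/S}(T)$ is represented fppf-locally by a genuine line bundle, which can be rigidified using the base-changed section, and the lack of nontrivial automorphisms forces the rigidified line bundle to descend uniquely to $T$. Once $\Pic_{(X/S,s)} \cong \Pic_{X/S}$ is established, representability of the latter by a scheme transports to the former, and the identity element $\id \in \Pic_{X/S}(\Pic_{X/S})$ corresponds to a canonical rigidified line bundle on $X \times_S \Pic_{X/S}$, which is by definition $\mathcal{P}$. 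The universal property is then essentially tautological: a rigidified line bundle $L'$ on $X \times_S S'$ determines a class in $\Pic_{(X/S,s)}(S') = \Pic_{X/S}(S')$, which by Yoneda corresponds to a unique morphism $g : S' \ra \Pic_{X/S}$, and compatibility with the universal family $\mathcal{P}$ forces a unique isomorphism $L' \cong (1_X \times g)^*\mathcal{P}$ of rigidified line bundles.

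The main obstacle is the sheaf-theoretic comparison $\Pic_{(X/S,s)} \cong \Pic_{X/S}$: the target is by construction the fppf (or \etale) sheafification of the naive Picard functor, whereas the source is already a sheaf thanks to the rigidification killing automorphisms. Executing the descent step carefully and invoking the universal form of $p_*\O_X = \O_S$ at each base change are the technical cores of the argument. After this rigidity is pinned down, the remainder of the proof reduces to Yoneda applied to the scheme representing $\Pic_{X/S}$.
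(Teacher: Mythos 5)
The paper does not prove this proposition but simply cites it from Bosch–L\"utkebohmert–Raynaud (\emph{N\'eron Models}, \S 8.2, Proposition 4), so there is no in-paper proof to compare against. Your sketch is correct and reproduces the standard argument from that reference: introduce the rigidified Picard functor, use $p_*\O_X = \O_S$ (universally) together with the section to kill automorphisms and establish the sheaf property, identify the rigidified functor with $\Pic_{X/S}$ by descent, and then extract the \Poincare bundle and its universal property from the identity morphism via Yoneda.
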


Note first that there exists an \etale cover of $U$ that splits $\mathcal{C}$ so that the \Poincare bundle exists by \cite[Ch. 1, Proposition 3.26]{Milne:Etale-Cohomology}. Instead of considering this cover, we first determine an open covering of $U$ so that each open set in the covering admits a degree $3$ cover that splits $\mathcal{C}$. Fix the cover $\mathcal{U} = \left\{ U_1, U_2, U_3, U_4\right\}$ of $U$, where 
\begin{itemize} 
	\item $U_1$ is defined by $x^3 \neq 0$,
	\item $U_2$ is defined by $y^3 \neq 0$,
	\item $U_3$ is defined by $x^3+ \alpha + \beta + y^3 \neq 0,$ and 
	\item $U_4$ is defined by $x^3 - \alpha + \beta -y^3 \neq 0$. 
\end{itemize} 
Since the characteristic of $k$ is different from $2$, the $U_i$ cover $U$.
	 
\begin{prop}\label{prop:cover}  
	For $1 \leq i \leq 4,$ there are Galois covers $V_i$ of $U_i$ of degree $3$ so that $$\mathcal{C}_i = \left( \mathcal{C} \times_U U_i \right) \times_{U_i} V_i = \mathcal{C} \times_U V_i$$ admits a section over $V_i$.
\end{prop}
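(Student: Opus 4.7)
The plan is to construct each $V_i$ as an explicit Kummer cover of $U_i$. Since $\mathcal{C}$ is cut out in $\mathbb{P}^2_U$ by $w^3 = f(u,v)$ with $f(u,v) = x^3 u^3 + \alpha u^2 v + \beta u v^2 + y^3 v^3$, producing a section of $\mathcal{C} \times_U V_i \to V_i$ reduces to picking a pair $(u_i,v_i) \in k^2 \setminus \{(0,0)\}$ and a cube root of $f_i := f(u_i,v_i)$ in the structure sheaf. The four open sets were designed precisely so that an easy $k$-rational point of $\mathbb{P}^1$ does this job on each piece: take $(1,0)$ on $U_1$, $(0,1)$ on $U_2$, $(1,1)$ on $U_3$, and $(1,-1)$ on $U_4$, yielding respectively $f_1 = x^3$, $f_2 = y^3$, $f_3 = x^3+\alpha+\beta+y^3$, and $f_4 = x^3-\alpha+\beta-y^3$. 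By the very definition of $U_i$ each $f_i$ is a unit of the coordinate ring $R_i$ of $U_i$.

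With these choices in hand, I would set $V_i = \Spec\bigl( R_i[t_i]/(t_i^3 - f_i) \bigr)$. Because $\mathrm{char}(k) \neq 3$ and $\omega \in k$, the Kummer sequence identifies $\mu_3 \cong \mathbb{Z}/3\mathbb{Z}$ as \etale group schemes, so $V_i \to U_i$ is automatically a finite \etale cyclic Galois cover of degree $3$, with generator acting by $t_i \mapsto \omega t_i$. The triple $(u_i,v_i,t_i)$ then defines the desired section $V_i \to \mathcal{C} \times_U V_i = \mathcal{C}_i$, since by construction $t_i^3 = f_i = f(u_i,v_i)$.

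The one point that needs real care is the convention that ``Galois cover of degree $3$'' means a connected such cover, which amounts to the claim that $f_i$ is not a cube in $R_i$ (so that $t_i^3 - f_i$ is irreducible). I expect this to be the main obstacle, though a mild one: each $R_i$ is a localization of a polynomial ring and thus a UFD, and each $f_i$ is a prime element there (a single polynomial generator for $i=1,2$, an irreducible linear polynomial for $i=3,4$), so the assertion follows from unique factorization. Once these four explicit Kummer extensions are produced, the remainder of the proposition is formal.
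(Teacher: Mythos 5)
Your construction is exactly the paper's: the same four rational points $(1:0)$, $(0:1)$, $(1:1)$, $(1:-1)$ of $\mathbb{P}^1$, the same Kummer covers obtained by adjoining a cube root of the unit $f_i$, and the same explicit section. (The paper writes the first section as $(x:0:x^2)$, which is projectively your $(1:0:t_1)$.) The one thing you add is a connectedness discussion, which the paper omits — and which is not actually needed for the later étale-descent argument, since a disjoint union of copies of $U_i$ works just as well as a cover. But note that your reasoning for this extra point has a flaw: in the coordinate ring $R_i$ of $U_i$ the element $f_i$ has been inverted and is therefore a unit, not a prime, and being a UFD does not prevent a unit from being a cube. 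The correct argument is that $f_i$ is a prime of the UFD $R = k[x^3,\alpha,\beta,y^3]_\Delta$ not associate to $\Delta$, so the unit group of $R_i = R[f_i^{-1}]$ is $k^\times \times \Delta^{\ZZ} \times f_i^{\ZZ}$, and the exponent of $f_i$ in any cube is divisible by $3$, hence $f_i$ itself is not a cube.
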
 

\begin{proof} 
Denote the third root of $x^3$ by $x$ and let $V_1$ be the cover of $U_1$ obtained by attaching $x$. Then $(x:0:x^2)$ defines a point on $\mathcal{C}_1$. Similarly, $(0:y:y^2)$ defines a point on $\mathcal{C}_2$, where $V_2$ is obtained by attaching a third root $y$ of $y^3$. Let $t$ be a third root of $x^3+ \alpha + \beta + y^3$ and let $V_3$ be the cover of $U_3$ obtained by attaching $t$. The point $(1:1:t)$ defines a point on $\mathcal{C}_3$. Similarly, $(1:-1:t)$ defines a point on  $\mathcal{C}_4$, where $V_4$ is obtained by attaching a third root $t$ of $x^3 - \alpha + \beta -y^3$. 
\end{proof} 

Let $V_{ij} = V_i \times_U V_j$ and denote $\mathcal{C}_{i} = \mathcal{C}\times_U V_i, \mathcal{C}_{ij} = \mathcal{C} \times_U V_{ij}$. By the previous proposition, the curves $\mathcal{C}_i$ admit sections and therefore the \Poincare bundles $\mathcal{P}_i$ on $\mathcal{C}_i \times \Pic^3_{\mathcal{C}_i/V_i}$ exist. Consider the sheaf of algebras $\End \left( \left(\pi_i\right)_* \mathcal{P}_i\right),$ where 
$$\pi_i : \mathcal{C}_i \times \Pic^3_{\mathcal{C}_i/V_i} \rightarrow \Pic^3_{\mathcal{C}_i/V_i}$$ 
is the projection onto the second factor. Let $q_i: V_{ij} \rightarrow V_i$ and $q_j: V_{ij} \rightarrow V_j$ be the projections. Denote the maps induced by $q_i$ on $\Pic^3_{\mathcal{C}_i/V_i}$ and $\mathcal{C}_i \times \Pic^3_{\mathcal{C}_i/V_i}$ by $q_i^J, q_i^{\mathcal{C} \times J}$, respectively. Consider the commutative diagram \begin{equation}\label{eqn:comm-diagram}
\xymatrix{
\mathcal{C}_i \times \Pic^3_{\mathcal{C}_i/V_i} \ar[d]^{\pi_i} & \mathcal{C}_{ij} \times \Pic^3_{\mathcal{C}_{ij}/V_{ij}} \ar[d]^{\pi_{ij}} \ar[l]_{q_{i}^{\mathcal{C} \times J}} \ar[r]^{q_{j}^{\mathcal{C} \times J}} & \mathcal{C}_j \times \Pic^3_{\mathcal{C}_j/V_j} \ar[d]^{\pi_j} \\ 
\Pic^3_{\mathcal{C}_i/V_i} & \Pic^3_{\mathcal{C}_{ij}/V_{ij}} \ar[l]_{q_{i}^J} \ar[r]^{q_{j}^J} & \Pic^3_{\mathcal{C}_j/V_j}
},
\end{equation}
where the vertical maps are projections onto the second factor. 

\begin{lemma} 
	With notation as above, there are isomorphisms
	$$\phi_{ij}: \left( q_i^J \right)^* \End \left( \left(\pi_{i}\right)_* \mathcal{P}_{i} \right) 
	\rightarrow \left( q_j^J\right)^*  \End \left( \left(\pi_{j}\right)_* \mathcal{P}_{j} \right). $$
\end{lemma}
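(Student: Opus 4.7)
The plan is to exploit the universal property of the \Poincare bundle: although $\mathcal{P}_i$ and $\mathcal{P}_j$ live on different bases, their pullbacks to the fiber product over $V_{ij}$ must classify the same family of degree-$3$ line bundles on $\mathcal{C}_{ij}$, so they can differ only by a twist by a line bundle from $\Pic^3_{\mathcal{C}_{ij}/V_{ij}}$. This twist vanishes after taking $\End$.

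First I would record that the Jacobian commutes with base change, so that $\Pic^3_{\mathcal{C}_i/V_i} = \Pic^3_{\mathcal{C}/U}\times_U V_i$ and similarly over $V_{ij}$. In particular, the two sections of $\mathcal{C}_i \to V_i$ and $\mathcal{C}_j \to V_j$ both pull back to sections of $\mathcal{C}_{ij} \to V_{ij}$. The pullbacks $(q_i^{\mathcal{C}\times J})^*\mathcal{P}_i$ and $(q_j^{\mathcal{C}\times J})^*\mathcal{P}_j$ are then line bundles on $\mathcal{C}_{ij}\times_{V_{ij}}\Pic^3_{\mathcal{C}_{ij}/V_{ij}}$ which, at every $T$-point of $\Pic^3_{\mathcal{C}_{ij}/V_{ij}}$, represent the tautological isomorphism class. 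By the universal property, after adjusting the rigidification there is a line bundle $\mathcal{M}_{ij}$ on $\Pic^3_{\mathcal{C}_{ij}/V_{ij}}$ with
$$(q_j^{\mathcal{C}\times J})^*\mathcal{P}_j \;\cong\; (q_i^{\mathcal{C}\times J})^*\mathcal{P}_i \;\otimes\; \pi_{ij}^*\mathcal{M}_{ij}.$$

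Next I would push forward via $\pi_{ij}$ in diagram \eqref{eqn:comm-diagram}. Because each $\mathcal{P}_i$ has degree $3$ on the genus-$1$ fibers, Riemann--Roch gives $R^1(\pi_i)_*\mathcal{P}_i = 0$ and $(\pi_i)_*\mathcal{P}_i$ is locally free of rank $3$, so cohomology and base change applies; since $q_i^J$ and $q_j^J$ are flat (indeed Galois \etale), flat base change and the projection formula yield
$$(q_j^J)^*(\pi_j)_*\mathcal{P}_j \;\cong\; (q_i^J)^*(\pi_i)_*\mathcal{P}_i \;\otimes\; \mathcal{M}_{ij}.$$
Applying $\End$ to both sides and using $\End(\mathcal{F}\otimes \mathcal{L})\cong \End(\mathcal{F})$ for an invertible sheaf $\mathcal{L}$, together with the compatibility of $\End$ with flat pullback, produces the desired isomorphism $\phi_{ij}$.

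The main obstacle will be the careful bookkeeping of the rigidifications coming from the two distinct sections inherited from $V_i$ and $V_j$, and making sure that the hypotheses for flat base change and the projection formula (in particular the vanishing of $R^1(\pi_i)_*\mathcal{P}_i$ and local freeness of $(\pi_i)_*\mathcal{P}_i$) are justified. Both are standard for a family of degree-$3$ line bundles on a smooth relative genus-$1$ curve, but they underwrite the key step that turns an equality of line bundles on the total space into an equality of their pushforwards up to a line-bundle twist on the base.
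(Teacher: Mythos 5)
Your argument is essentially the paper's own: pull back along $q_i^{\mathcal{C}\times J}$, invoke the universal property of the \Poincare bundle on $\mathcal{C}_{ij}\times\Pic^3_{\mathcal{C}_{ij}/V_{ij}}$, use flat base change to commute pullback with $(\pi_{\bullet})_*$, and note that $\End$ is insensitive to line-bundle twists. The only real difference is expository: the paper compresses the universality step into a single citation, whereas you make the twist $\mathcal{M}_{ij}$ and the identity $\End(\mathcal{F}\otimes\mathcal{M}_{ij})\cong\End(\mathcal{F})$ explicit; your appeal to $R^1$-vanishing and cohomology-and-base-change is not needed, since the base changes $q_i^J,q_j^J$ are flat and Hartshorne III.9.3 already gives the required commutation of $(\pi_\bullet)_*$ with pullback.
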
 

\begin{proof} Remark first that $q_{i}^J$ and $q_j^J$ are flat and diagram \ref{eqn:comm-diagram} is commutative. Using \cite[Ch. III, Proposition 9.3]{Hartshorne}, we see that $\left( q_{i}^J \right)^*\left( \pi_{i}\right)_* \mathcal{P}_{i} =  \left(\pi_{ij}\right)_*  \left( q_{i}^{\mathcal{C} \times J}\right)^*  \mathcal{P}_{i}$. Therefore, 
	\begin{align*}
	\left(q_{i}^J \right)^*\End \left( \left(\pi_{i}\right)_* \mathcal{P}_{i} \right) 
		 &\cong \End \left(  \left( q_{i}^J \right)^* \left(\pi_{i}\right)_* \mathcal{P}_{i}   \right) \\
		 &\cong \End \left(  \left(\pi_{ij}\right)_*  \left( q_{i}^{\mathcal{C} \times J}\right)^*  \mathcal{P}_{i} \right) \\
		 &\cong \End \left(  \left(\pi_{ij}\right)_*  \left( q_{j}^{\mathcal{C} \times J}\right)^* \mathcal{P}_{j} \right) \\
		 &\cong \left( q_{j}^J \right)^*  \End \left( \left(\pi_{j}\right)_* \mathcal{P}_{j} \right),
		\end{align*} 
where the third equality holds by universality of the \Poincare bundle on ${\mathcal{C}}_{ij} \times \Pic^3_{\mathcal{C}_{ij}/V_{ij}}$ \cite[Ch. 8, Section 2, Propostion 4]{Bosch:Neron-Models}. 
\end{proof}
 
\begin{prop} Using the above notation for $\mathcal{P}_i$ and $\pi_i$, $\End\left( \pi_* \mathcal{P}_i \right)$ glue to a sheaf on the \etale site of $\Pic^3_{\mathcal{C}/U}$. Denote the descended Azumaya algebra by $\mathcal{A}$. 
\end{prop}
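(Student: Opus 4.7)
The plan is to apply effective \etale descent for quasi-coherent sheaves of algebras. First I would verify that $\{V_i \to U\}_{i=1}^4$ is an \etale cover. Each $V_i \to U_i$ is obtained by adjoining a cube root of a function that is invertible on $U_i$; because $\mathrm{char}(k) \neq 3$ and $\omega \in k$, such an extension is finite \etale and Galois of degree $3$. Combining this with the Zariski cover $\{U_i\}$ yields an \etale cover of $U$, and pulling back along $J := \Pic^3_{\mathcal{C}/U} \to U$ produces an \etale cover $\{J_i := \Pic^3_{\mathcal{C}_i/V_i} \to J\}$ with double overlaps $J_{ij} := \Pic^3_{\mathcal{C}_{ij}/V_{ij}}$ and triple overlaps $J_{ijk} := \Pic^3_{\mathcal{C}_{ijk}/V_{ijk}}$, where $V_{ijk} = V_i \times_U V_j \times_U V_k$.

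Next I would verify the cocycle condition for the transition isomorphisms $\phi_{ij}$ on $J_{ijk}$. Each $\phi_{ij}$ is built from the flat-base-change identification $(q_i^J)^* (\pi_i)_* \mathcal{P}_i \cong (\pi_{ij})_* (q_i^{\mathcal{C}\times J})^* \mathcal{P}_i$ together with the universal-property identification $(q_i^{\mathcal{C}\times J})^* \mathcal{P}_i \cong (q_j^{\mathcal{C}\times J})^* \mathcal{P}_j$ of two rigidified \Poincare bundles on $\mathcal{C}_{ij} \times_{V_{ij}} J_{ij}$. Pulling back everything to the triple overlap, all three pullbacks of $\mathcal{P}_i$, $\mathcal{P}_j$, $\mathcal{P}_k$ become canonically identified with the \Poincare bundle on $\mathcal{C}_{ijk} \times_{V_{ijk}} J_{ijk}$; any two such identifications differ only by tensoring with a line bundle pulled back from the base, and such twists act trivially on endomorphism sheaves. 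Writing $r_{ij}: J_{ijk} \to J_{ij}$ for the natural projection, this gives
\[
r_{jk}^* \phi_{jk} \circ r_{ij}^* \phi_{ij} = r_{ik}^* \phi_{ik}
\]
on $J_{ijk}$.

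With the cocycle condition in place, \etale descent produces a unique quasi-coherent sheaf of algebras $\mathcal{A}$ on the \etale site of $J$ whose restriction to each $J_i$ is $\End\left( (\pi_i)_* \mathcal{P}_i \right)$. Since $\mathcal{P}_i$ restricts on every fiber of $\pi_i$ to a degree-$3$ line bundle on a genus-one curve, cohomology and base change imply that $(\pi_i)_* \mathcal{P}_i$ is locally free of rank $3$. Its endomorphism sheaf is therefore an Azumaya algebra of rank $9$, and because the Azumaya property is \etale local on the base, $\mathcal{A}$ is Azumaya on $J$.

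The main obstacle is the precise bookkeeping for the cocycle condition: the \Poincare bundle is canonical only up to rigidification, so two identifications of \Poincare bundles on a common space can differ by a line bundle pulled back from the base. The conceptual relief is that this ambiguity is invisible to $\End$, and the verification reduces to the uniqueness clause in the universal property of the \Poincare bundle applied to $\mathcal{C}_{ijk} \times_{V_{ijk}} J_{ijk}$.
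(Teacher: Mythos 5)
Your proof is correct and takes essentially the same route as the paper: pull back to triple overlaps, invoke the uniqueness clause of the \Poincare bundle's universal property, and observe that the rigidification ambiguity is a line bundle from the base and so is invisible to $\End$. The paper compresses this to ``holds true by construction of $\phi_{ij}$'' and defers the local-freeness/Azumaya verification to a later proposition, so your write-up is simply a more explicit version of the same argument.
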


\begin{proof}
	For each triple $i,j,k$, denote by $V_{ijk} = V_i \times_U V_j \times_U V_k$ and let $p^J_{i}$ be the map induced on $\Pic^3_{\mathcal{C}\times_U V_{ijk}/V_{ijk}}$ by the map $q_{i}: {V}_{ijk} \rightarrow V_i$. Let $\phi_{ij}$ be the restrictions of the isomorphisms from the previous lemma. By \cite[\href{https://stacks.math.columbia.edu/tag/04TP}{Tag 04TP, Lemma 7.26.4}]{stacks-project}, it suffices to show that 
	$$\xymatrix{ \left( p_{i}^J\right)^*\End \left( \left(\pi_{i}\right)_* \mathcal{P}_{i} \right) \ar[rd]^{\phi_{ij}}
		\ar[rr]^{\phi_{ik}} && \left( p_{k}^J\right)^*\End \left( \left(\pi_{k}\right)_* \mathcal{P}_{k} \right)\\
		& \left( p_{j}^J \right)^*\End \left( \left(\pi_{j}\right)_* \mathcal{P}_{j} \right) \ar[ru]^{\phi_{jk}}
	}$$  commutes for all $i,j,k$. This holds true by construction of $\phi_{ij}$.
\end{proof}

We now review the construction of  the \Poincare bundle using a $\Theta$-divisor. Fix the identity section on the Jacobian $J$ of $\mathcal{C}$ and recall that the curve $\mathcal{C}$ is equipped with a relative very ample line bundle $\mathcal{O}(1)$ of degree $3$. It defines a divisor called the $\Theta$-divisor in $\Pic^3_{\mathcal{C}/U}$. This theta divisor corresponds to the zero section under the morphism 
$$\xymatrix{J = \Pic^0_{\mathcal{C}/U} \ar[r]^{\otimes \mathcal{O}(1)}&  \
\Pic^3_{\mathcal{C}/U}}.$$
Denote by $\Theta_0$ the divisor in $\Pic^0_{\mathcal{C}/U}$ corresponding to the twisted line bundle $\mathcal{O}(\Theta)(-1)$.
\begin{lemma}\label{lemma: complement of theta is affine} 
The complement of $\Theta$ in $\Pic^3_{\mathcal{C}/U}$ is affine. Denote this complement by $\Spec(S)$. 
\end{lemma}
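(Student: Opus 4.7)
The plan is to prove this by establishing that $\mathcal{O}(\Theta)$ is a $\pi$-ample line bundle on $\Pic^3_{\mathcal{C}/U}$, where $\pi \colon \Pic^3_{\mathcal{C}/U} \to U$ is the structure morphism, and then invoking the standard fact that over an affine base, the complement of the zero locus of a section of a relatively ample line bundle on a projective scheme is affine. Since $U = \Spec(R)$ is affine (it is a principal open of $\mathbb{A}^4$), the problem reduces entirely to verifying relative ampleness of $\mathcal{O}(\Theta)$.

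First I would unpack $\Theta$ concretely. Under the isomorphism $\otimes \mathcal{O}(1) \colon J \xrightarrow{\sim} \Pic^3_{\mathcal{C}/U}$, the divisor $\Theta$ is the image of the zero section $e \colon U \to J$ of the relative elliptic curve $J/U$. The image of a section of a smooth projective morphism of relative dimension one is automatically a relative effective Cartier divisor, so $\Theta$ is a Cartier divisor on $\Pic^3_{\mathcal{C}/U}$ whose restriction to each geometric fiber of $\pi$ is a single closed point.

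Next I would apply the fiberwise criterion for relative ampleness (as in \cite{stacks-project}, Tag 0D3A, or Hartshorne III.9): for a projective morphism to a Noetherian base, $\pi$-ampleness of a line bundle is equivalent to ampleness of its restriction to every geometric fiber. Each geometric fiber of $\pi$ is an elliptic curve, and the restriction of $\Theta$ is a single closed point, yielding a line bundle of degree one; on an elliptic curve any degree-one line bundle is ample. Hence $\mathcal{O}(\Theta)$ is $\pi$-ample. Since the tautological section of $\mathcal{O}(\Theta)$ has vanishing locus exactly $\Theta$, the complement $\Pic^3_{\mathcal{C}/U} \setminus \Theta$ is affine over the affine scheme $U$, and therefore affine.

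I do not foresee a significant obstacle: the argument simply strings together two standard inputs—that the image of the zero section gives a legitimate relative Cartier divisor, and that fiberwise ampleness implies relative ampleness over a Noetherian affine base. The only delicate bookkeeping point is keeping track of the twist by $\mathcal{O}(1)$ when transporting $\Theta$ between $J$ and $\Pic^3_{\mathcal{C}/U}$, but this does not affect degrees or ampleness.
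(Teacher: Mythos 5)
Your proof is correct, and it takes a genuinely different (and in fact cleaner) route than the paper's. The paper establishes ampleness by working directly with $\mathcal{L}^3$: it shows $p_*\mathcal{L}^3$ is locally free of rank $3$ via Riemann--Roch and constancy of $H^0$ along fibers, verifies surjectivity of the canonical map $p^*p_*\mathcal{L}^3 \to \mathcal{L}^3$ via fiberwise very ampleness plus Nakayama, and checks that the induced map to $\mathbb{P}(p_*\mathcal{L}^3)$ is an immersion --- in short, it hand-verifies the definition of relative very ampleness for $\mathcal{L}^3$, then invokes affineness of $U$ to upgrade a multiple of $\Theta$ to a genuinely very ample divisor. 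You instead bypass the explicit very-ampleness computation entirely by appealing to the fiberwise criterion for relative ampleness of a line bundle under a proper morphism over a Noetherian base: each geometric fiber of $\pi$ is an elliptic curve and $\mathcal{O}(\Theta)$ restricts to a degree-one line bundle there, which is ample. This replaces the Riemann--Roch/Nakayama/immersion bookkeeping with one citation and a one-line degree computation. Both proofs close the same way (relatively ample plus affine base gives that the nonvanishing locus of the tautological section is affine), and both need the same preliminary observation that $\Theta$ is a relative effective Cartier divisor --- you deduce it from smoothness and relative dimension one, the paper cites Katz--Mazur 1.2.3. What your route buys is economy and a more conceptual organization; what the paper's route buys is explicitness, and it incidentally records that $\mathcal{L}^3$ is already very ample, which is sharper information than bare ampleness should one want it later.
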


\begin{proof}
First observe that by Lemma 1.2.3, \cite{Katz-Mazur}, $\Theta$ is a Cartier divisor over $U$ and hence $\mathcal{L} = \mathcal{O}(\Theta)$ defines an invertible sheaf on $\Pic^3_{\mathcal{C}/U}$. Next we show that $\mathcal{L}^3$ is relatively very ample. By Lemma 28.36.7, \cite{stacks-project}, it suffices to show that the canonical map $p^* \left( p\right)_* \mathcal{L}^3 \rightarrow \mathcal{L}^3$ is surjective, and that the associated map $X \rightarrow \mathbb{P}\left(\left(p \right)_* \mathcal{L}^3\right)$ is an immersion. Now by Corollory 12.9, Chapter 3, \cite{Hartshorne}, $\left( p\right)_*\mathcal{L}^3$ is locally free of rank 3 since the dimension of $H^0$ is constant and equal to $3$  in fibres by Riemann-Roch. Next the canonical map $p^* p_* \mathcal{L}^3 \rightarrow \mathcal{L}^3$ is surjective in fibers since the restriction of $\mathcal{L}^3$ is very ample in each fiber. So by Nakayama's Lemma, the canonical map $p^* \left( p\right)_* \mathcal{L}^3 \rightarrow \mathcal{L}^3$ is surjective. Finally by very ampleness of  $\mathcal{L}^3$ along the fibres, the induced morphism  $X \rightarrow \mathbb{P}\left(\left( p\right)_* \mathcal{L}^3\right)$ is an immersion along the fibers and hence it is an immersion itself.

Now since $U$ is itself affine, it follows from Lemma 29.39.4, \cite{stacks-project} that some multiple of $\Theta$ is a very ample divisor in $\Pic^3_{\mathcal{C}/U}$. In particular, the complement of $\Theta$ in $\Pic^3_{\mathcal{C}/U}$ is affine.
\end{proof}

\subsection{Construction of an isomorphism $\mathcal{A} \rightarrow A_\Delta$}

We want to show that the Azumaya algebra $\mathcal{A}$ given by the descent of  $\End\left(\left( \pi_i\right)_*(\mathcal{P}_i)\right)$  and the binary cubic generic Clifford algebra $A$, constructed algebraically using generators and relations, agree on the complement of the $\Theta$ divisor. Recall that we denote by $\mathcal{U}= \left\{ U_1, \ldots, U_4\right\}$ a cover of $U$ so that there are $V_i \rightarrow U_i$ Galois covers of degree $3$ and $\mathcal{C}_i = \mathcal{C} \times_U V_i$ splits over $V_i$ (see \Cref{prop:cover}). We will first construct the morphism over $V_i$ and then descend it to a morphism over each $U_i$. 

\begin{prop}\label{prop:DefineP}
	The push-forward $\left( \pi_i\right)_* \mathcal{P}_i$ of the \Poincare bundle is locally free of rank $3$ over $\Pic^3_{\mathcal{C}_i/V_i}$, where $\pi: \mathcal{C}_i \times \Pic^3_{\mathcal{C}_i/V_i} \rightarrow  \Pic^3_{\mathcal{C}_i/V_i}$ is the projection. In particular, for the complement $\Spec(S_i)$ of $\Theta$ in $\Pic^3_{\mathcal{C}_i/U_i}$, there is a projective $S_i$-module $P_i$ so that $\tilde{P}_i \cong \left. \pi_*\mathcal{P}_i \right|_{\Spec(S_i)}.$
\end{prop}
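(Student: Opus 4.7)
The plan is to reduce the local freeness assertion to a fibral cohomology calculation via cohomology and base change. The morphism $\mathcal{C}_i \to V_i$ is a smooth proper family of curves of genus $1$: over the nondegenerate locus $U$ the defining equation of $\mathcal{C}$ cuts out a smooth plane cubic in each fiber, and this property is preserved by base change to $V_i$. Consequently the projection $\pi_i \colon \mathcal{C}_i \times_{V_i} \Pic^3_{\mathcal{C}_i/V_i} \to \Pic^3_{\mathcal{C}_i/V_i}$ is again smooth and proper, with fiber over a geometric point $(v, [L])$ canonically identified with $\mathcal{C}_{i,v}$. By the universal property of the \Poincare bundle $\mathcal{P}_i$ recalled above, its restriction to this fiber is (up to the chosen rigidification) the parameterized line bundle $L$ of degree $3$.

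With that fibral description in hand, I would apply Riemann--Roch on each fiber. Since $\deg L = 3 > 2g - 2 = 0$, Serre duality gives $H^1(\mathcal{C}_{i,v}, L) = 0$, and Riemann--Roch then yields $h^0(\mathcal{C}_{i,v}, L) = 3$. Both $h^0$ and $h^1$ are therefore constant functions on $\Pic^3_{\mathcal{C}_i/V_i}$, so Grauert's theorem in the form of \cite[Ch.~III, Corollary~12.9]{Hartshorne} applies and delivers the conclusion: $(\pi_i)_* \mathcal{P}_i$ is locally free of rank $3$, and its formation commutes with arbitrary base change on $\Pic^3_{\mathcal{C}_i/V_i}$.

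For the \emph{In particular} clause, I would restrict to the complement of the pulled-back theta divisor. Each cover $V_i \to U_i$ is constructed by adjoining a cube root to an affine ring, so $V_i$ is affine and $V_i \to U$ is an affine morphism; hence the base change of the affine open $\Spec(S) \subset \Pic^3_{\mathcal{C}/U}$ established in the previous lemma is the affine scheme $\Spec(S_i) \subset \Pic^3_{\mathcal{C}_i/V_i}$. A locally free sheaf of finite rank on an affine scheme is precisely the sheafification of a finitely generated projective module, so $\left.(\pi_i)_* \mathcal{P}_i\right|_{\Spec(S_i)} \cong \tilde{P}_i$ for a rank $3$ projective $S_i$-module $P_i$.

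The step requiring the most care, and the only real subtlety, is the identification of the fibral restriction of $\mathcal{P}_i$ with the degree $3$ line bundle parameterized by the target point; this is precisely the content of the universal property invoked above, so once that bookkeeping is made explicit the remainder of the argument is a direct application of standard machinery and the affineness result already proved.
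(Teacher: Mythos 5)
Your proof is correct and amounts to a cleaner version of the argument the paper gives. You identify the fiber of $\pi_i$ over a geometric point $(v,[L])$ of $\Pic^3_{\mathcal{C}_i/V_i}$ with the genus one curve $\mathcal{C}_{i,v}$, observe via the universal property that $\mathcal{P}_i$ restricts to the parameterized degree $3$ line bundle $L$, apply Riemann--Roch and Serre duality to get $h^0 = 3$, $h^1 = 0$ on every fiber, and then invoke Grauert's theorem to obtain local freeness of rank $3$ with base-change compatibility. The paper instead takes a detour: it first restricts the whole picture to a single closed point $f \in V_i$, argues that $(\pi_f)_*\mathcal{P}_f$ is torsion-free and hence locally free because $\Pic^3_{\mathcal{C}_f/k_f}$ is a smooth curve, then uses this intermediate local freeness to compute the fiber dimension of $(\pi_i)_*\mathcal{P}_i$ at closed points, and finally cites Mumford's version of cohomology and base change (\cite[p.~51, Lemma~1]{Mumford:Abelian-Varieties}). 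The two arguments converge on the same core tool (constancy of fiber cohomology plus cohomology and base change), but your direct fibral Riemann--Roch computation avoids the intermediate curve-theoretic reduction and is, if anything, easier to audit; your treatment of the ``in particular'' clause via affineness of $V_i \to U$ and the module--sheaf correspondence over an affine scheme is likewise correct and matches the intended content. One small thing to make explicit if you were to write this up: $\Pic^3_{\mathcal{C}_i/V_i}$ is a smooth $V_i$-scheme and $V_i$ is étale over the reduced, irreducible $U$, so $\Pic^3_{\mathcal{C}_i/V_i}$ is reduced and the hypotheses of Grauert's theorem in Hartshorne's formulation are met.
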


\begin{proof}
Let $t \in \Pic^3_{\mathcal{C}_i/V_i}$ be a closed point and denote its residue field by $k_t$. Then there is some closed point $f \in V_i$ with residue field $k_f$ so that $t$ lies in the fiber over $f$. Consider the Cartesian square 
$$\xymatrix{
\mathcal{C}_f \times \Pic^3_{\mathcal{C}_f/k_f} \ar[r]^i\ar[d]^{\pi_f} & \mathcal{C}_i \times \Pic^3_{\mathcal{C}_i/V_i} \ar[d]^{\pi_i} \\
\Pic^3_{\mathcal{C}_f/ k_f} \ar[r] & \Pic^3_{\mathcal{C}_i/V_i}
}.$$
By the universal property of the \Poincare bundle, the pullback $\mathcal{P}_f = i^* \mathcal{P}_i$ is the \Poincare bundle on $C_f \times \Pic^3_{\mathcal{C}_f/k_f}$. In particular, it is locally free of degree $3$.  Then $(\pi_f)*\mathcal{P}_f$ is torsion free, as it is the push-forward of a torsion free sheaf. Since $\Pic^3_{\mathcal{C}_f/k_f}$ is a curve, we deduce that $(\pi_f)*\mathcal{P}_f$ is locally free of rank $3$. We calculate that $$\dim_{k_t}  \left( \left( \pi_i\right)*\mathcal{P}_i \otimes k_f \right) = \dim_{k_f}  \left( (\pi_f)_*\mathcal{P}_f \otimes k_t\right) = 3.$$ 
Finally, by \cite[Page 51, Lemma 1]{Mumford:Abelian-Varieties} $\left( \pi_i \right)_*\mathcal{P}_i$ is locally free of rank $3$ on $\Pic^3_{\mathcal{C}_i/V_i}$. 
%
\end{proof} 

Recall that we denote by $\Spec S$ the affine complement of $\Theta$ in $\Pic^3_{\mathcal{C}/U}$. Let $S_i$ be defined by $\Spec S_i = \Spec S \times_U V_i$. Consider the sequence of morphisms 

$$\mathcal{C}_i\times_{V_i} \Spec S_i \overset{q_{S_i}}\longrightarrow \mathbb{P}^1_{V_i} \times_{V_i} \Spec S_i \overset{p_{S_i}}\longrightarrow \Spec S_i.$$
Then $\pi_i= p_{S_i} \circ q_{S_i}$ is the projection onto the second factor. Denote by $\mathcal{F}$ the coherent sheaf $(q_{S_i})_*\mathcal{P}_i$ on $\mathbb{P}^1_{V_i} \times_{V_i} \Spec S_i$. 

\begin{prop}
	The natural morphism $u: p^*_{S_i} (p_{S_i})^* \mathcal{F} \ra \mathcal{F}$ is an isomorphism. 
\end{prop}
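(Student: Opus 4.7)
The plan is to reduce the claim to a fiberwise triviality statement and then invoke cohomology and base change. First I would identify the fiber behaviour: for a geometric point $t \to \Spec S_i$ lying over $v \to V_i$, the fiber of $p_{S_i}$ is $\mathbb{P}^1_{k(t)}$, and on this fiber the restriction of $\mathcal{F}$ is $(q_v)_* L_t$, where $L_t$ is the degree $3$ line bundle on the elliptic curve $\mathcal{C}_v$ classified by $t$ (this is the defining property of $\mathcal{P}_i$). Because $\Spec S_i$ is the complement of the $\Theta$-divisor, and $\Theta$ is the locus of the class $\mathcal{O}_{\mathcal{C}}(1) = q^*\mathcal{O}_{\mathbb{P}^1}(1)$, the bundle $L_t$ satisfies $L_t \not\cong \mathcal{O}_{\mathcal{C}_v}(1)$.

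The heart of the argument, which I expect to be the main obstacle, is to show $(q_v)_* L_t \cong \mathcal{O}_{\mathbb{P}^1_{k(t)}}^{\oplus 3}$. By Grothendieck's splitting theorem $(q_v)_* L_t = \bigoplus_{j=1}^{3} \mathcal{O}(a_j)$ with $\sum a_j = 0$ (the total degree is $0$ by Riemann--Roch, since pushforward preserves Euler characteristic and $\chi(L_t) = 3$). To pin down the $a_j$ I would use the projection formula together with Leray to get
\begin{equation*}
H^0\bigl(\mathbb{P}^1, (q_v)_* L_t \otimes \mathcal{O}(-1)\bigr) = H^0\bigl(\mathcal{C}_v, L_t \otimes \mathcal{O}_{\mathcal{C}_v}(-1)\bigr).
\end{equation*}
The line bundle $L_t \otimes \mathcal{O}_{\mathcal{C}_v}(-1)$ has degree $0$ on the elliptic curve $\mathcal{C}_v$ and is nontrivial precisely because $L_t \not\cong \mathcal{O}_{\mathcal{C}_v}(1)$; hence its $H^0$ vanishes. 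This forces $a_j \leq 0$ for each $j$, and combined with $\sum a_j = 0$ yields $a_j = 0$, so $(q_v)_* L_t$ is trivial.

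Once triviality on each fiber is established, the cohomological invariants $h^0 = 3$, $h^1 = 0$ are constant in $t$. Cohomology and base change applied to the flat projective morphism $p_{S_i}$ then implies that $(p_{S_i})_* \mathcal{F}$ is locally free of rank $3$ and its formation commutes with arbitrary base change. In particular, on every fiber the counit $u$ becomes the evaluation map $H^0(\mathbb{P}^1_{k(t)}, \mathcal{F}_t) \otimes_{k(t)} \mathcal{O}_{\mathbb{P}^1} \to \mathcal{F}_t$, which is an isomorphism because $\mathcal{F}_t$ is trivial. Source and target of $u$ are both locally free of rank $3$ on $\mathbb{P}^1_{V_i} \times \Spec S_i$ (the target because $q_{S_i}$ is finite flat of degree $3$ and $\mathcal{P}_i$ is a line bundle), so a fiberwise isomorphism of coherent sheaves of the same rank promotes to a sheaf isomorphism by Nakayama, completing the proof.
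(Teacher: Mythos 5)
Your proof is correct and follows the same fiberwise strategy as the paper: establish that $\mathcal{F}$ and $p^*_{S_i}(p_{S_i})_*\mathcal{F}$ are both locally free of rank $3$, check that $u$ is an isomorphism fiber by fiber, and conclude. The difference is that you unpack the key fiber computation, which the paper simply cites from \cite[Proposition 3.8]{Kulkarni:On-the-Clifford-algebra-of-a-binary-form}: namely that $(q_v)_*L_t$ is the trivial rank-$3$ bundle on $\mathbb{P}^1$ whenever the classifying point $t$ lies off the $\Theta$-divisor. Your derivation of this — Grothendieck splitting $(q_v)_*L_t \cong \bigoplus \mathcal{O}(a_j)$, the Euler-characteristic count $\sum a_j = 0$, the projection-formula identification of $H^0(\mathbb{P}^1,(q_v)_*L_t \otimes \mathcal{O}(-1))$ with $H^0(\mathcal{C}_v, L_t(-1))$, and the vanishing of the latter because $L_t(-1)$ is a nontrivial degree-$0$ bundle on a genus-$1$ curve — is exactly the content of the cited lemma, and it is good to see it reconstructed. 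Two smaller points where you diverge: you obtain local freeness of $\mathcal{F}$ directly from finite flatness of $q_{S_i}$ (pushforward of a line bundle by a finite flat degree-$3$ morphism), which is cleaner than the paper's upper-semicontinuity argument; and for the final promotion you go through cohomology and base change (with the constancy of $h^0=3$, $h^1=0$) plus Nakayama, whereas the paper cites a surjectivity criterion from Grothendieck and then uses the rank argument. Both routes are sound. The one thing worth being explicit about is that your cohomology-and-base-change step (deducing that the formation of $(p_{S_i})_*\mathcal{F}$ commutes with passage to fibers) is precisely what is needed to identify the fiber of $u$ with the evaluation map $H^0(\mathcal{F}_t)\otimes\mathcal{O}\to\mathcal{F}_t$; you do invoke it, but it is the load-bearing step of the reduction and deserves the emphasis.
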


\begin{proof} 
	By the previous proposition $(p_{S_i})_* \mathcal{F} = \left( \pi_i \right)_*\mathcal{P}_i$ is a locally free sheaf of rank $3$ and therefore the sheaf $p^*_{S_i} (p_{S_i})^* \mathcal{F}$ is locally free of rank $3$ as well. Let $y$ be a closed point in $\mathbb{P}^1_{S_i}$. Then there exists some $f \in V_i$ such that $y$ is a closed point in $S_f = S \otimes k(f)$ and thus by the proof of \cite[Proposition 3.8]{Kulkarni:On-the-Clifford-algebra-of-a-binary-form} the dimension of $\mathcal{F} \otimes k(y)$ is $3$. By upper semicontinuity and using that $\mathbb{P}^1_{S_i}$ is of finite type over $V_i$ we see that $\mathcal{F}$ is a locally free sheaf of rank $3$.
	By \cite[Proposition 3.8]{Kulkarni:On-the-Clifford-algebra-of-a-binary-form} the morphism $u$ is an isomorphism on all closed points. Now by \cite[Corollary 0.5.5.7]{Grothendieck:Technique-de-descente-et-theoremes-dexistence-en-geometrie-II} it follows that $u$ is surjective. Since $u$ is a map between locally free sheaves of rank $3$ it follows that $u$ is an isomorphism. 
\end{proof} 

\begin{cor} 
	The $S_i[u,v]$-module $\bigoplus_j H^0 \left( \mathbb{P}^1_{S_i}, \mathcal{F}(j) \right)$ is graded isomorphic to the module $M:= P_i \otimes_{S_i} S_i[u,v]$, where $P_i$ is as in \Cref{prop:DefineP}. 
\end{cor}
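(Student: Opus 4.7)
The plan is to unwind the definitions and reduce the claim to a standard projection formula calculation on $\mathbb{P}^1_{S_i}$.

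First I would invoke the previous proposition to rewrite $\mathcal{F}$. Since the natural map $p_{S_i}^{*}(p_{S_i})_{*}\mathcal{F} \to \mathcal{F}$ is an isomorphism, and since by construction $(p_{S_i})_{*}\mathcal{F} = (\pi_i)_{*}\mathcal{P}_i$ whose restriction to $\Spec S_i$ is $\tilde{P}_i$ (using \Cref{prop:DefineP}), we obtain
\[
\mathcal{F} \;\cong\; p_{S_i}^{*}\,\tilde{P}_i .
\]
Hence for every integer $j$ we have $\mathcal{F}(j) \cong p_{S_i}^{*}\tilde{P}_i \otimes \mathcal{O}_{\mathbb{P}^1_{S_i}}(j)$.

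Next I would apply the projection formula for the flat proper morphism $p_{S_i} : \mathbb{P}^1_{S_i} \to \Spec S_i$. Since $P_i$ is projective over $S_i$, the formula gives
\[
(p_{S_i})_{*}\bigl(p_{S_i}^{*}\tilde{P}_i \otimes \mathcal{O}(j)\bigr) \;\cong\; \tilde{P}_i \otimes_{\mathcal{O}_{\Spec S_i}} (p_{S_i})_{*}\mathcal{O}(j),
\]
and taking global sections yields
\[
H^0\!\bigl(\mathbb{P}^1_{S_i},\mathcal{F}(j)\bigr) \;\cong\; P_i \otimes_{S_i} H^0\!\bigl(\mathbb{P}^1_{S_i},\mathcal{O}(j)\bigr).
\]
Summing over all $j$ and using the standard identification $\bigoplus_j H^0(\mathbb{P}^1_{S_i},\mathcal{O}(j)) = S_i[u,v]$ (noting that the negative-degree pieces vanish, so the direct sum runs effectively over $j\geq 0$) gives
\[
\bigoplus_j H^0\!\bigl(\mathbb{P}^1_{S_i},\mathcal{F}(j)\bigr) \;\cong\; P_i \otimes_{S_i} S_i[u,v] \;=\; M
\]
as $S_i$-modules.

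Finally, I would verify that this isomorphism is in fact graded and $S_i[u,v]$-linear. The grading on the left is by the twist index $j$, while on the right it is the grading of $S_i[u,v] = \bigoplus_j S_i\cdot (u,v)_j$; these match by construction, as the isomorphism in each degree $j$ is obtained by tensoring $P_i$ with the degree-$j$ piece of $S_i[u,v]$. The $S_i[u,v]$-module structure on the left comes from the cup product with sections of $\mathcal{O}(1)$, which under the projection formula identification corresponds precisely to multiplication by $u$ and $v$ on the second tensor factor of $M$. The only delicate point, which I expect to be the minor obstacle, is checking the compatibility of the multiplicative structure under the projection-formula isomorphism — but this reduces to the observation that the cup-product map $H^0(\mathcal{F}(j))\otimes H^0(\mathcal{O}(1)) \to H^0(\mathcal{F}(j+1))$ is induced from the canonical map $(p_{S_i})_{*}\mathcal{F}\otimes (p_{S_i})_{*}\mathcal{O}(1)\to (p_{S_i})_{*}\mathcal{F}(1)$, which is an isomorphism here because $\mathcal{F}$ is pulled back from the base.
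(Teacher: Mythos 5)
Your proof is correct and follows essentially the same route as the paper's: apply \Cref{prop:DefineP} together with the preceding proposition ($p_{S_i}^{*}(p_{S_i})_{*}\mathcal{F}\cong\mathcal{F}$), then the projection formula, then identify $\bigoplus_j (p_{S_i})_*\mathcal{O}(j)$ with $S_i[u,v]$. You go one step beyond the paper in explicitly verifying that the identification respects the grading and the $S_i[u,v]$-module structure via cup product, which the paper leaves implicit but is a welcome addition.
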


\begin{proof}
	By the projection formula and using the fact that $p_{S_i}^* (p_{S_i})_* \mathcal{F} \cong \mathcal{F}$ we see that 
	$$(p_{S_i})_* \mathcal{F}(j) \cong (p_{S_i})_* \mathcal{F} \otimes (p_{S_i}) \left( \O_{\mathbb{P}^1_{S_i}} (j) \right)$$
	Now since $\mathbb{P}^1_{S_i}$ is projective over $S_i$, the module $\bigoplus_j H^0 \left( \mathbb{P}^1_{S_i}, \mathcal{F}(j) \right)$ is obtained by 
	$$\bigoplus_j H^0 \left(\Spec S_i, (p_{S_i})_*\mathcal{F} \times (p_{S_i}) (\O_{\mathbb{P}^1_{S_i}}(j))\right) 
	\cong P_i \otimes_{S_i} \left( \bigoplus_j (p_{S_i})_* (\O_{\mathbb{P}^1_{S_i}}(j))\right) .
	$$
	The statement follows from the computation of the module associated to the structure sheaf of the projective space over $S_i$. 
\end{proof}

We are now ready to describe the desired isomorphism over $S_i$. Recall that $U = \Spec R = k \left[ x^3, \alpha, \beta, y^3 \right]_\Delta$, and $V_i = \Spec R_i$. 
\begin{prop}\label{prop:morphism before descent} There exists an algebra homomorphism 
	$$\psi_i:  A_\Delta \otimes_R R_i \ra \End_{S_i}\left( P_i\right)$$
	with $P_i$ is as in \Cref{prop:DefineP}. 
\end{prop}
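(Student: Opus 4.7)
The plan is to construct $\psi_i$ by defining the images of the generators $x, y \in V$ as explicit $S_i$-linear endomorphisms of $P_i$ arising from multiplication by the homogeneous coordinate $w$ on $\mathcal{C}_i$, and then verifying that these images satisfy the defining relations of the binary cubic generic Clifford algebra.

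First, I would observe that $w$ is a global section of the relative hyperplane bundle $\mathcal{O}_{\mathcal{C}_i/V_i}(1)$, so multiplication by $w$ induces an $\mathcal{O}$-linear map $\mathcal{P}_i \to \mathcal{P}_i \otimes \mathrm{pr}_1^*\mathcal{O}_{\mathcal{C}_i/V_i}(1)$. Applying $(q_{S_i})_*$ and the projection formula produces an $\mathcal{O}_{\mathbb{P}^1_{S_i}}$-linear map $\mathcal{F} \to \mathcal{F}(1)$. Passing to global sections and invoking the corollary identifying $\bigoplus_j H^0(\mathbb{P}^1_{S_i}, \mathcal{F}(j))$ with $M := P_i \otimes_{S_i} S_i[u,v]$, I obtain a degree-$1$ $S_i[u,v]$-linear endomorphism $\tilde W$ of $M$. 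Its restriction to the degree-$0$ summand $P_i \subset M$ takes the form
\[ \tilde W(p) = X(p) \cdot u + Y(p) \cdot v \]
for uniquely determined $S_i$-linear maps $X, Y : P_i \to P_i$, which will be the images of $x$ and $y$.

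Next, I would exploit the relation $w^3 = f(u,v)$ on $\mathcal{C}_i$ to conclude that $\tilde W^3$ equals multiplication by $f(u,v) \in R_i[u,v] \subset S_i[u,v]$ on $M$. Expanding $\tilde W^3(p)$ for $p \in P_i$ using $S_i[u,v]$-linearity of $\tilde W$ gives
\begin{align*}
\tilde W^3(p) &= X^3(p)\,u^3 + (X^2Y + XYX + YX^2)(p)\,u^2v\\
&\quad + (XY^2 + YXY + Y^2X)(p)\,uv^2 + Y^3(p)\,v^3.
\end{align*}
Comparing this with $f(u,v) \cdot p = x^3 u^3 p + \alpha u^2 v \cdot p + \beta uv^2 \cdot p + y^3 v^3 \cdot p$ and using that $\{u^a v^b\}$ is a free basis of $S_i[u,v]$ over $S_i$ yields the four identities
\begin{align*}
X^3 &= x^3\,\id_{P_i}, & X^2Y + XYX + YX^2 &= \alpha\,\id_{P_i},\\
XY^2 + YXY + Y^2X &= \beta\,\id_{P_i}, & Y^3 &= y^3\,\id_{P_i}.
\end{align*}

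Finally, I would check that $X, Y$ satisfy the three relations defining $A$. Centrality of $x^3, y^3 \in R_i \subset S_i$ in $\End_{S_i}(P_i)$ together with the first and fourth identities above give $X^3 Y = YX^3$ and $XY^3 = Y^3X$ immediately. For the third relation, commuting the central element $\alpha\,\id = X^2Y + XYX + YX^2$ past $Y$ yields
\[ X^2Y^2 + XYXY + YX^2Y = YX^2Y + YXYX + Y^2X^2, \]
which, after cancellation of $YX^2Y$, is precisely $X^2Y^2 + (XY)^2 = (YX)^2 + Y^2X^2$. Hence $x \mapsto X$, $y \mapsto Y$ extends to a $k$-algebra map $A \to \End_{S_i}(P_i)$, and since $\Delta$ maps to the invertible element $\Delta(x^3,\alpha,\beta,y^3) \in R_i \subset S_i$, this descends to the required $R_i$-algebra homomorphism $\psi_i : A_\Delta \otimes_R R_i \to \End_{S_i}(P_i)$.

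The main technical subtlety is not the algebra but verifying that the geometric construction of $\tilde W$ interacts correctly with the identifications of \Cref{prop:DefineP} and the preceding corollary, so that $\tilde W^3$ genuinely acts as multiplication by $f(u,v)$ on $M$. This is essentially a careful projection-formula and base-change argument, and will be the place where attention is required.
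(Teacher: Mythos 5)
Your proof is correct and follows essentially the same route as the paper: both identify multiplication by $w$ (via the graded module structure of $M = P_i\otimes_{S_i}S_i[u,v]$ over $S_i[u,v,w]/(w^3 - x^3u^3-\alpha u^2v -\beta uv^2 - y^3 v^3)$) with an operator decomposing as $u\circ X + v\circ Y$ on the degree-zero piece, and then use $w^3 = f(u,v)$ together with freeness of the degree-three piece to extract the relations for $X$ and $Y$. Your write-up is somewhat more explicit than the paper's at the final step — the paper simply asserts that $\alpha^i_u,\alpha^i_v$ "satisfy the relations of the Clifford algebra," whereas you spell out the four identities $X^3=x^3\,\id$, $X^2Y+XYX+YX^2=\alpha\,\id$, etc., and then show these imply the three defining relations of the generic Clifford algebra $A$ — which is a worthwhile clarification, since what one needs is that the images satisfy the relations of $A$ (and act by the prescribed scalars on the center), not merely the relations of a specialized $A_f$.
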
 

\begin{proof}
	Analogous to the proof in \cite[Proposition 4.2]{Kulkarni:On-the-Clifford-algebra-of-a-binary-form}, we will find elements in $\End_{S_i}\left(P_i\right)$ that satisfy the relations of the Clifford algebra. Denote by $P^j_i$ the $j$-th graded piece of the module $M = P_i \otimes_{S_i} S_i[u,v]$. Note that $P^1_i \cong P^0_i \oplus P^0_i$ with generators $u$ and $v$. We identify $u$ and $v$ with the corresponding elements in $\Hom\left(P^0_i,P^1_i\right)$ given by left multiplication by $u$ and $v$, respectively. Then a straightforward calculation shows that  
	\begin{equation}\label{eq:Hom decomposes} \Hom\left(P^0_i,P^1_i\right) = u \circ \End\left(P^0_i\right) \oplus v \circ \End\left(P^0_i\right).\end{equation} 
	Since $\mathcal{P}_i$ is the \Poincare bundle on $\mathcal{C}_i \times \Jac \mathcal{C}_i$ we see that $M$ is also a graded module over $S_i[u,v,w]/(w^3 - x^3u^2v - \alpha u^2v - \beta uv^2 - y^3 v^3)$, where the degree of $w$ is one. Identify $w$ with the element in $\Hom\left(P^0_i,P^1_i\right)$ given by left multiplication. By the decomposition in (\ref{eq:Hom decomposes}) there are elements $\alpha^i_u, \alpha^i_v \in \End\left(P_i\right)$ so that 
	\begin{equation} \label{eqn:alphau} w = u \circ \alpha^i_u + v \circ \alpha^i_v.\end{equation} 
	Then the equation
	$$\left( u \circ \alpha^i_u + v \circ \alpha^i_v \right)^d = w^d = x^3u^3 + \alpha u^2v + \beta uv^2 + y^3 v^3$$
	holds true in $\Hom\left(P^0_i,P^d_i\right)$. Since $\Hom\left(P^0_i,P^d_i\right)$ is a free $\End\left(P^0_i,P^0_i\right)$-module, the elements $\alpha^i_u$ and $\alpha^i_v$ satisfy the relations of the Clifford algebra. Define the morphism $\psi_i$ by setting $\psi_i(x) = \alpha^i_u, \psi_i(y) = \alpha^i_v$. 
\end{proof}

It remains to prove that the morphism in the previous proposition descends to an isomorphism over $U$ via \etale descent, which will imply the following statement. 
\begin{prop}\label{prop:morphism after descent} 
	There exists an algebra isomorphism $\psi: A_\Delta \ra \mathcal{A}\otimes S$. 
\end{prop}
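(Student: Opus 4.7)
The plan is to glue the local morphisms $\psi_i$ constructed in \Cref{prop:morphism before descent} into a global morphism $\psi: A_\Delta \to \mathcal{A}\otimes S$ via \etale descent, and then upgrade it to an isomorphism by a rank count combined with a fiberwise check.

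First, I would verify that the $\psi_i$ are compatible with the descent data used to construct $\mathcal{A}$. Recall that $\mathcal{A}$ was built by descending $\End\left((\pi_i)_* \mathcal{P}_i\right)$ along the covers $V_i \to U_i$, with gluing isomorphisms $\phi_{ij}$ arising from the universal property of the \Poincare bundle on $\mathcal{C}_{ij} \times \Pic^3_{\mathcal{C}_{ij}/V_{ij}}$. The crucial point is that the construction of $\psi_i$ is intrinsic: the endomorphisms $\alpha^i_u, \alpha^i_v \in \End(P_i)$ are uniquely characterized by the decomposition \eqref{eqn:alphau} of the multiplication-by-$w$ map under the splitting of $\Hom\left(P^0_i, P^1_i\right)$ in \eqref{eq:Hom decomposes}. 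Since the modules $P_i$ arise as global sections of push-forwards of $\mathcal{P}_i$, and since pullback along $q_i^{\mathcal{C} \times J}$ commutes with both the push-forward and the decomposition of $\Hom(P^0, P^1)$, the images of $x$ and $y$ under $\psi_i$ and $\psi_j$ must coincide on $V_{ij}$ after applying $\phi_{ij}$. This compatibility allows the $\psi_i$ to glue (by \cite[Tag 04TP]{stacks-project}) into a global morphism $\psi: A_\Delta \to \mathcal{A}\otimes S$ of sheaves of algebras on $\Spec S$, where one identifies $\Spec S$ (the complement of $\Theta$ in $\Pic^3_{\mathcal{C}/U}$) with the affine elliptic curve $\Spec Z_\Delta$ via the twist $\otimes\, \mathcal{O}(1)\colon \Pic^0_{\mathcal{C}/U} \to \Pic^3_{\mathcal{C}/U}$.

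Next I would check that $\psi$ is an isomorphism by comparing ranks and then reducing to the classical fiberwise statement. By \Cref{thm:basisofA} the source $A_\Delta$ is free of rank $18$ over $\mathcal{S}_\Delta$, and since $Z_\Delta = \mathcal{S}[\delta^3]$ is free of rank $2$ over $\mathcal{S}$ by \Cref{thm:center-of-A}, $A_\Delta$ is locally free of rank $9$ over $Z_\Delta \cong S$. On the target side, $(\pi_i)_* \mathcal{P}_i$ is locally free of rank $3$ by \Cref{prop:DefineP}, so $\mathcal{A}\otimes S$ is locally free of rank $9$ over $S$. Thus $\psi$ is a morphism between locally free $S$-modules of the same rank, and it suffices to check surjectivity, which can be done fiberwise at closed points $f \in U$. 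At such a point $\psi$ restricts to the classical comparison map between the Clifford algebra $A_f$ of the binary cubic form $f$ and the endomorphism algebra of the push-forward of the \Poincare bundle on the Jacobian of $w^3 - f(u,v)$, which is shown to be an isomorphism in \cite{Kulkarni:On-the-Clifford-algebra-of-a-binary-form}.

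The principal obstacle will be the descent step: making the canonicity argument precise requires carefully tracking how the gluing isomorphisms $\phi_{ij}$ interact with the decomposition $w = u \circ \alpha_u + v \circ \alpha_v$ that defines the $\psi_i$, i.e., verifying that the universal property of the \Poincare bundle yields an identification of the local data sending $\alpha_u^i \mapsto \alpha_u^j$ and $\alpha_v^i \mapsto \alpha_v^j$ under $\phi_{ij}$. Once this compatibility is in hand, the rank comparison and fiberwise appeal proceed routinely.
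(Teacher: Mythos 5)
Your approach matches the paper's in its essential structure: both glue the local morphisms $\psi_i$ by invoking the uniqueness of the decomposition $w = u\circ\alpha_u + v\circ\alpha_v$ from \eqref{eq:Hom decomposes} on the overlaps $\Spec S_{ij}$, and both ultimately appeal to the pointwise result of \cite{Kulkarni:On-the-Clifford-algebra-of-a-binary-form} for the isomorphism claim. The modest difference lies in how the isomorphism is verified. The paper checks that each $\psi_i$ is an isomorphism on stalks (via \cite[Theorem 6.2]{Kulkarni:On-the-Clifford-algebra-of-a-binary-form} and \cite[II Proposition 1.1]{Hartshorne}) before descent, then observes that the property of being an isomorphism is stable under faithfully flat descent. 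You descend first and then argue by a rank count --- $A_\Delta$ has rank $18/2 = 9$ over $Z_\Delta$ by \Cref{thm:basisofA} and \Cref{thm:center-of-A}, matching $\End$ of the rank-$3$ sheaf from \Cref{prop:DefineP} --- which reduces the problem to surjectivity checked fiberwise. This is a clean shortcut that reuses numerical data already established in the paper. One small point of care: surjectivity must be checked at closed points of $\Spec S$, not of $U$; but since your fiberwise appeal to Kulkarni yields an isomorphism over the whole curve $J_f$ for each $f\in U$, it does give surjectivity at every closed point of $\Spec S$ lying over $f$, so the argument is sound.
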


\begin{proof} 
	Denote $S_{ij} = S_i \otimes_{S} S_j$. This is the affine complement of the $\Theta$-divisor in $\Jac \mathcal{C}_{ij}$, where $\mathcal{C}_{ij} = \mathcal{C} \times_U V_{ij}$ as before. Denote the module 
	$$P_{ij} = P_i \otimes_{S_i} {S_{ij}} \cong P_j \otimes_{S_j} S_{ij}.$$
	Let $\alpha_u^i$, $\alpha_v^i$, $\alpha_u^j$, and $\alpha_v^j$ be chosen as in the previous proposition. Since they both satisfy \cref{eqn:alphau}, we get on the intersection that  
	\begin{equation*}
	u \circ \left. \alpha_u^{i}\right|_{P_{ij}} + v \circ \left. \alpha_v^{i}\right|_{P_{ij}}  =	w = 
	u \circ \left. \alpha_u^{j}\right|_{P_{ij}} + v \circ \left. \alpha_v^{j}\right|_{P_{ij}}.
	\end{equation*}
	By \cref{eq:Hom decomposes}, the choice of such elements is unique. We conclude that $\psi_i$ and $\psi_j$ agree on the intersection of their domains. \\

	It remains to show that $\psi$ is an isomorphism, which is a property that is stable under descent. Thus it is enough to show that $\psi_i$ is an isomorphism. Note that by \cite[II Proposition 1.1]{Hartshorne} it is further enough to check that $\psi_i$ is an isomorphism on stalks, which is true by \cite[Theorem 6.2]{Kulkarni:On-the-Clifford-algebra-of-a-binary-form}. Finally, the isomorphisms $\psi_i$ glue to an isomorphism $\psi: A_\Delta \rightarrow \mathcal{A} \otimes S$. 
\end{proof} 

\subsection{The Brauer class is nontrivial}
In this section, we prove that the Brauer class associated to the binary cubic generic Clifford algebra is not trivial over any base field $k$. Recall that the Brauer class associated to a binary cubic form $f$ is trivial if and only the curve $\mathcal{C}_f$ admits a $k$-rational point \cite{Haile:WhenistheCliffordAlgebraofabinarycubicformsplit}. We will first exploit this relationship for the generic Clifford algebra at the generic point. 

\begin{prop}\label{prop:Ck(U)-does-not-have-a-point}The curve $\mathcal{C}_{k(U)}$ does not have a rational point over $k(U)$. In particular, the Brauer class of $A_\Delta \otimes k(U)$ is not trivial for any field $k$. \end{prop}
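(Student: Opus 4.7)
The plan is to show directly that $\mathcal{C}_{k(U)}$ has no $k(U)$-rational point, and then deduce non-triviality of the Brauer class from Haile's criterion \cite{Haile:WhenistheCliffordAlgebraofabinarycubicformsplit}: the Clifford algebra of a binary cubic form $f$ is split precisely when $w^3 = f(u,v)$ admits a rational point. Identifying $k(U)$ with $k(a,b,c,d)$, where $(a,b,c,d)$ stand for the algebraically independent generators $(x^3,\alpha,\beta,y^3)$ of the coordinate ring of $U$, the fiber $A_\Delta \otimes k(U)$ is the Clifford algebra of the generic form $f = au^3 + bu^2 v + cuv^2 + dv^3$, so the second assertion will follow immediately from the first.

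To prove the rational-point statement, I would assume for contradiction that $(u,v,w) \in k(U)^3$ is a nonzero solution of $w^3 = au^3 + bu^2 v + cuv^2 + dv^3$ and clear denominators so that $u,v,w \in k[a,b,c,d]$. The degenerate cases $u = 0$ and $v = 0$ reduce respectively to $w^3 = dv^3$ and $w^3 = au^3$, each of which would force one of the transcendentals $d$ or $a$ to be a cube in $k(a,b,c,d)$; this is excluded at once because, for instance, $T^3 - a$ is irreducible over $k(b,c,d)$.

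The main case is $u,v \neq 0$, which I would attack with an $a$-degree count. Setting $p = \deg_a u$, $q = \deg_a v$, $r = \deg_a w$, the left hand side $w^3$ has $a$-degree $3r$, while the right hand side is a sum of four monomials of $a$-degrees $3p+1$, $2p+q$, $p+2q$, and $3q$. The key combinatorial observation is that the maximum of these four exponents is always uniquely attained: if $q \leq p$ then $3p+1$ strictly exceeds the other three, while if $q > p$ then $3q$ strictly exceeds the other three. Consequently no cancellation at top degree can occur on the right, and the two subcases then yield contradictions: in the first, $3r = 3p+1$ is impossible modulo $3$; in the second, comparison of leading $a$-coefficients gives $w_r^3 = d\, v_q^3$ in $k[b,c,d]$, forcing $d$ to be a cube, which was already ruled out.

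The substantive step is the degree bookkeeping; once it is verified that the maximum of $\{3p+1,\, 2p+q,\, p+2q,\, 3q\}$ is uniquely attained in every case, the rest of the argument is formal. The concluding deduction that the Brauer class is non-trivial for every base field $k$ is then just a direct invocation of Haile's theorem at the generic point of $U$.
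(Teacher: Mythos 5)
Your proof is correct, and it takes a genuinely different route from the paper. The paper's argument is a triple-nested infinite descent: it first shows that $h^3 = af^3 + dg^3$ has no nontrivial solution in $k[a,d]$ by repeatedly extracting common factors of $a$ and $d$; it then bootstraps this to $h^3 = af^3 + bf^2g + dg^3$ over $k[a,b,d]$ by reducing modulo $b$; and finally to the full equation over $k[a,b,c,d]$ by reducing modulo $c$, at each stage invoking minimality of degree to reach a contradiction. Your argument instead isolates the single variable $a$, observes that the $a$-degrees $3p+1,\,2p+q,\,p+2q,\,3q$ of the four monomials on the right have a unique maximum in every case (namely $3p+1$ when $q \le p$, and $3q$ when $q > p$), and then reads off a contradiction either from $3 \nmid 3p+1$ or from the leading-coefficient identity $w_r^3 = d\,v_q^3$ forcing $d$ to be a cube in $k(b,c,d)$. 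This is shorter and avoids all the descent bookkeeping; what the paper's version buys in return is a reusable intermediate fact (the nonsolvability of the Fermat-type subequations $h^3 = af^3 + dg^3$ and $h^3 = af^3+bf^2g+dg^3$) proved by a more standard descent template. One very minor imprecision in your write-up: you say ``$T^3 - a$ is irreducible over $k(b,c,d)$,'' but $a$ is not an element of $k(b,c,d)$; you mean irreducible over $k(b,c,d)(a) = k(a,b,c,d)$, which follows, e.g., by Eisenstein at the prime $(a)$ in $k(b,c,d)[a]$. With that clarification, the argument is airtight, and the concluding appeal to Haile's criterion matches the paper's.
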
 

\begin{proof}Let $a,b,c,d$ be algebraically independent variables. Then the function field of $U$ is $k(U) \cong k(a,b,c,d)$.  Suppose for a proof by contradiction that there are $f,g,h \in k(a,b,c,d)$ not all zero such that 
	$$h^3 = a f^3 + bf^2 g + c fg^2 + dg^3.$$
	After clearing denominators we may assume without loss of generality that $f,g,h \in k[a,b,c,d]$.\\

	Modulo $(b,c)$ we get the equation $h^3 = a f^3 + d g^3$ in $k[a,d]$. We will prove that this has no nontrivial solution. Note that $f=0$ would imply that $g=h=0$ by degree considerations. Assume that $f$ is nonzero of minimal total degree. Now
	\begin{align*} 
	h^3 \equiv af^3 \mod (d)\\
	h^3 \equiv dg^3 \mod (a)
	\end{align*}
	By degree considerations we see that $f\equiv h \equiv 0 \mod (d)$, and $g \equiv h \equiv 0 \mod (a)$. Thus there are $h_1, f_1, g_1 \in k[a,d]$ such that 
	$ h = ad h_1, f = d f_1, g = a g_1$. After division by $ad$ the above equation becomes
	$$ a^2 d^2 h_1^3 =  d^2 f_1^3 + a^2 g_1^3$$
	Now again taking modulo $a$ and $d$ we see that there are $f_2,g_2 \in k[a,d]$ so that $f_1 = a f_2$ and $ g_1 = d g_2$. Thus 
	$h_1^3 = a f_2^3 + d g_2^3,$ where $f_2$ is of smaller degree than in the original equation. We conclude that $f,g,h \equiv 0 \mod (b,c)$. \\

	Further, taking the original equation modulo $c$ gives $h^3 = a f^3 + b f^2 g + d g^3$ in $k[a,b,d]$. As before, we assume that $f$ is nonzero of minimal degree. Modulo $b$ we get 
	$$h^3 \equiv a f^3 + d g^3 \mod (b)$$
	which does not have a non-trivial solution by the previous. Thus 
	$f = b f_1, g = bg_1,$ and $h = bh_1$ for some $f_1, g_1, h_1 \in k[a,b,d]$. Therefore $h_1^3 = a f_1^3 + b f_1^2 g_1^2 + d g_1^3$, which is of smaller degree. \\
	
	Finally, consider the original equation again and assume that $f$ is nonzero of minimal degree. Taking the equation modulo $c$ we get
	$$h^3 \equiv a f^3 + b f^2 g + dg^3 \mod (c).$$
	By the previous this does not have any nontrivial solution. Therefore $f = cf_1, g = c g_1,$ and $h = c h_1$ for some $f_1, g_1, h_1 \in k[a,b,c,d]$. Thus $h_1^3 = a f_1^3 + b f_1^2 g_1 + c f_1 g_1^2 + d g_1^3$, which is an equation of smaller degree; a contradiction. 
\end{proof}

\begin{thm}\label{thm:Brauer-class-nontrivial}The Brauer class of $A_\Delta$ is not trivial. \end{thm}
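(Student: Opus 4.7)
The plan is to deduce the theorem directly from \Cref{prop:Ck(U)-does-not-have-a-point} by a base change argument. By \Cref{thm:ADelta-is-Azumaya}, $A_\Delta$ is an Azumaya algebra over its center $Z_\Delta$, so it defines a Brauer class $[A_\Delta] \in \Br(\Spec Z_\Delta)$. To show this class is nontrivial, it is enough to exhibit a single morphism into $\Spec Z_\Delta$ along which the pullback of $[A_\Delta]$ is nonzero, since base change of Azumaya algebras yields a well-defined group homomorphism on Brauer groups. \Cref{prop:Ck(U)-does-not-have-a-point} supplies precisely such a pullback, namely the inclusion of the generic fiber of the relative elliptic fibration $\Spec Z_\Delta \to U$.

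Concretely, the inclusion $R \hookrightarrow k(U)$ induces a base change $Z_\Delta \to Z_\Delta \otimes_R k(U)$, which on spectra corresponds to taking the generic fiber of $\Spec Z_\Delta \to \Spec R = U$ (an open subscheme of the generic elliptic curve $E_\eta$ over $k(U)$). The induced Brauer group homomorphism
$$\Br(\Spec Z_\Delta) \longrightarrow \Br\bigl(\Spec(Z_\Delta \otimes_R k(U))\bigr)$$
sends $[A_\Delta]$ to $[A_\Delta \otimes_R k(U)]$. Thus if $[A_\Delta]$ were trivial, so would be its image $[A_\Delta \otimes_R k(U)]$. However, by \Cref{prop:Ck(U)-does-not-have-a-point} this image is nontrivial for any base field $k$, so $[A_\Delta]$ itself must be nontrivial.

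The argument is essentially formal given the two deeper ingredients already in place: the Azumaya property from \Cref{thm:ADelta-is-Azumaya}, and the nontriviality at the generic point from \Cref{prop:Ck(U)-does-not-have-a-point} (which in turn rests on Haile's criterion together with the elementary but delicate descent-in-degree calculation showing $\mathcal{C}_{k(U)}(k(U)) = \emptyset$). The only conceptual point to verify is that the Brauer class of $A_\Delta \otimes_R k(U)$ appearing in \Cref{prop:Ck(U)-does-not-have-a-point} is literally the base change of $[A_\Delta]$ — equivalently, that $A_\Delta \otimes_R k(U)$ remains Azumaya over $Z_\Delta \otimes_R k(U)$ — and this is immediate from the stability of the Azumaya property under base change. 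I do not foresee any substantive obstacle to this plan.
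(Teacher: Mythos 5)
Your proof is correct and follows essentially the same route as the paper: reduce to the generic fiber and invoke \Cref{prop:Ck(U)-does-not-have-a-point}. The one small (and legitimate) economy you make is in noticing that only the trivial direction of base change is needed — a trivial Brauer class pulls back to a trivial Brauer class along any morphism — whereas the paper cites the injectivity of the restriction $\Br(J)\rightarrow\Br(J_{k(U)})$ from Grothendieck's \emph{Le groupe de Brauer II}, which is a strictly stronger fact than the argument requires. Both are fine; yours is marginally more self-contained.
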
 

\begin{proof} 
	The map $\Br(J) \ra \Br\left( J_{k(U)}\right)$ is injective \cite[Corollary 1.8]{Grothendieck:Le-groupe-de-Brauer-II}. Thus it will be enough to show that the image of $A$ under the above map is nontrivial, which follows from \cite{Haile:WhenistheCliffordAlgebraofabinarycubicformsplit} and the previous proposition. 
\end{proof} 

\subsection{$A$ is a maximal order over its center}

In this section, we show that even though $A$ is not Azumaya, it is a maximal order over its center. To that end, we show that $A$ is reflexive and a maximal order at every stalk. 

\begin{lemma}\label{lem:A-is-a-reflexive-Z-module}$A$ is a reflexive $Z$-module. \end{lemma}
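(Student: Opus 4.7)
The plan is to show that $A$ is a maximal Cohen--Macaulay module over $Z$ and that $Z$ is a Noetherian normal domain, so that reflexivity follows from the classical fact that maximal Cohen--Macaulay modules over Noetherian normal domains are reflexive.

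By \Cref{thm:basisofA}, $A$ is free of rank $18$ over $\mathcal{S} = k[x^3, y^3, \alpha, \beta, \gamma]$, and by \Cref{thm:center-of-A}, $Z = \mathcal{S}[s]/(s^2 - h)$ with $h = \gamma^3 + \tfrac14 \Delta$. Hence $Z$ is a free $\mathcal{S}$-module of rank $2$, so the inclusion $\mathcal{S} \subset Z$ is finite and flat, with $\dim Z = \dim \mathcal{S} = 5$. First I would verify that $Z$ is normal via Serre's criterion. Being finite free over the regular ring $\mathcal{S}$, $Z$ is automatically Cohen--Macaulay, so $S_2$ holds. For $R_1$, one checks that $h \in \mathcal{S}$ has no squared non-unit factor: viewed as a cubic in $\gamma$ over $k[x^3, \alpha, \beta, y^3]$, its discriminant is $-\tfrac{27}{16}\Delta^2 \ne 0$, so $h$ has distinct roots over the fraction field, and degree considerations rule out squared factors lying in $k[x^3, \alpha, \beta, y^3]$. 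Hence $\Spec Z \to \Spec \mathcal{S}$ is a double cover ramified along a reduced divisor, which gives regularity in codimension $1$.

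Next I would show that $A$ is maximal Cohen--Macaulay over $Z$. Fix a prime $\mathfrak{q} \subset Z$ lying over $\mathfrak{p} \subset \mathcal{S}$; flatness gives $\dim Z_\mathfrak{q} = \dim \mathcal{S}_\mathfrak{p}$. Since $A$ is free over $\mathcal{S}$, the localization $A_\mathfrak{q}$ is a free $\mathcal{S}_\mathfrak{p}$-module, so its depth over $\mathcal{S}_\mathfrak{p}$ equals $\dim \mathcal{S}_\mathfrak{p}$. Because the finite local extension $\mathcal{S}_\mathfrak{p} \subset Z_\mathfrak{q}$ makes $\mathfrak{p} Z_\mathfrak{q}$ and $\mathfrak{q} Z_\mathfrak{q}$ have the same radical, depths with respect to these two ideals agree on any finitely generated $Z_\mathfrak{q}$-module. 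Thus $\text{depth}_\mathfrak{q} A_\mathfrak{q} = \dim Z_\mathfrak{q}$ at every prime of $Z$, so $A$ is maximal Cohen--Macaulay over $Z$, hence satisfies $S_2$, hence is reflexive.

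The main obstacle I anticipate is the depth comparison under the finite extension $\mathcal{S} \subset Z$; once one recognizes that depth depends only on the radical of the defining ideal, the argument reduces to standard commutative algebra. A secondary concern is the normality of $Z$, but it is important to note that only $R_1 + S_2$ is needed, rather than full regularity of $Z$ (which would likely fail over the degeneration locus $\Delta = 0$).
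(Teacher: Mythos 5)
Your argument is correct in substance but takes a genuinely different route from the paper's. The paper proves reflexivity homologically: it first observes that $\Hom_{\mathcal{S}}(Z,\mathcal{S})$ is $Z$-free of rank one (so $Z$ is Gorenstein over $\mathcal{S}$), then runs the change-of-rings spectral sequence $\Ext^i_Z(A, \Ext^j_{\mathcal{S}}(Z,\mathcal{S})) \Rightarrow \Ext^{i+j}_{\mathcal{S}}(A,\mathcal{S})$ to get $\Ext^i_Z(A,Z)\cong\Ext^i_{\mathcal{S}}(A,\mathcal{S})=0$ for $i>0$; since $Z$ is Gorenstein, this forces $A$ to be maximal Cohen--Macaulay (equivalently totally reflexive) over $Z$, hence reflexive. (The paper's phrasing that this makes $A$ \emph{projective} over $Z$ is an overstatement; $Z$ is singular along $\Delta=0$, so $\Ext$-vanishing only yields total reflexivity, which is all that is needed.) You instead verify that $Z$ is a normal domain via Serre's criterion ($S_2$ from finite flatness over the regular ring $\mathcal{S}$, and $R_1$ because $h=\gamma^3+\tfrac14\Delta$ is squarefree, so the double cover is ramified along a reduced divisor) and then show $A$ is MCM over $Z$ by a depth comparison through the finite flat map $\mathcal{S}\to Z$. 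Both routes reduce to the same geometric fact — $A$ is MCM — but yours uses Serre's criterion and depth transfer where the paper uses Gorenstein duality and a spectral sequence; yours also establishes the stronger fact that $Z$ is normal, which the paper never checks. One small technical caution: it is not automatic that $A_{\mathfrak{q}}$ is a \emph{finite} $\mathcal{S}_{\mathfrak{p}}$-module (localizing a finite module at a prime of $Z$ can lose finiteness over the subring $\mathcal{S}$), so ``free over $\mathcal{S}_{\mathfrak{p}}$'' should be replaced by ``flat over $\mathcal{S}_{\mathfrak{p}}$''; one then computes $H^i_{\mathfrak{p}\mathcal{S}_{\mathfrak{p}}}(A_{\mathfrak{q}}) = H^i_{\mathfrak{p}}(\mathcal{S}_{\mathfrak{p}})\otimes A_{\mathfrak{q}}$ by flatness, which vanishes for $i<\dim\mathcal{S}_{\mathfrak{p}}$ and is nonzero at the top (since $A_{\mathfrak{q}}/\mathfrak{p}A_{\mathfrak{q}}\neq 0$), and combines with independence-of-base for local cohomology to give exactly the depth equality you want. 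With that patch your proof stands.
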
 

\begin{proof}
	The center $Z$ of $A$ is a free $\mathcal{S}$-module with generators $1$ and $s$, where $s$ is as in the proof of \Cref{thm:center-of-A}. Therefore, the $Z$-module $\Hom_{\mathcal{S}}\left( Z,\mathcal{S} \right)$ is free of rank one with generator $\rho$ defined by $\rho(1) = 0$ and $\rho(t) = 1$. Therefore 
	$$\Ext^{i}_{\mathcal{S}}\left(Z, \mathcal{S}\right) \cong \begin{cases} Z & \text{if } i=0 \\
		0 & \text{else} \end{cases}$$ 
	as $Z$-modules. The change of rings spectral sequence for $\Ext$ is 
	$$\Ext_Z^i \left( A, \Ext_{\mathcal{S}}^j (Z, \mathcal{S}) \right) \Rightarrow\Ext_{\mathcal{S}}^{i+j} (A,\mathcal{S}).$$
	Degeneration implies that 
	$$\Ext^i_{Z}\left(A,\Ext^0_S\left( Z,\mathcal{S} \right) \right) \cong \Ext^i_Z \left( A, Z \right) \cong \Ext^i_{\mathcal{S}} (A, \mathcal{S})$$ for all $i\geq 0$. The latter vanishes for all $i>0$ since $A$ is free of rank 18 over $\mathcal{S}$ by \Cref{thm:basisofA}. Thus $\Ext^i_{Z}(A,Z) =0$ for all $i>0$ and so $A$ is projective as a $Z$-module. Hence $A$ is also reflexive as a $Z$-module.  
\end{proof} 

We have seen that $A_\Delta$ is an Azumaya algebra \cref{thm:ADelta-is-Azumaya} and therefore it is also a maximal order away from the discriminant. It remains to 
prove that $A_{(\Delta)}$ is also a maximal order. A more explicit description of $A_\Delta \otimes k(U)$ will be useful for this proof. 
\begin{lemma} 
	Let $\epsilon = xyx + \omega x^2 y+ \omega^2  yx^2 \in A$. Then	
	$$\epsilon x  =  \omega x \epsilon \text{ and } \epsilon y = \omega y \epsilon + \gamma - \omega \gamma$$
	and $\epsilon^3$ is central in $A$. This implies that over the generic point, $A_\Delta \otimes k(U)$ coincides with the symbol algebra $\left( x^3, \epsilon^3\right)_{3,Z \otimes k(U)}$. 
\end{lemma}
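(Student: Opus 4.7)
The plan is to dispatch the two commutation identities by direct computation, derive centrality of $\epsilon^3$ as a formal consequence, and then pin down $A_\Delta \otimes k(U)$ as the symbol algebra by comparing dimensions of two central simple algebras.

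For the first identity $\epsilon x = \omega x \epsilon$, I would expand $\epsilon x = xyx^2 + \omega x^2 yx + \omega^2 y x^3$, rewrite the last summand as $\omega^2 x^3 y$ using centrality of $x^3$, and compare with $\omega x \epsilon = \omega x^2 yx + \omega^2 x^3 y + xyx^2$ (after simplifying with $\omega^3 = 1$); the two match termwise. The second identity $\epsilon y = \omega y \epsilon + (1-\omega)\gamma$ is the less automatic of the two: expanding the difference $\epsilon y - \omega y \epsilon$ yields $(xyxy - y^2 x^2) + \omega(x^2 y^2 - yxyx)$, and here I would invoke \emph{both} presentations $\gamma = (xy)^2 - y^2 x^2 = (yx)^2 - x^2 y^2$ from \Cref{lemma:c-in-center}, one for each parenthesis, to rewrite the expression as $\gamma - \omega \gamma$. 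This step is the only place where the characteristic-free structure of the defining relations of $A$ is genuinely used.

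Centrality of $\epsilon^3$ should then follow formally from these two identities and the centrality of $\gamma$ (\Cref{lemma:c-in-center}). Iterating $\epsilon x = \omega x \epsilon$ three times gives $\epsilon^3 x = \omega^3 x \epsilon^3 = x \epsilon^3$. For $y$, iterating the second identity and collecting terms, I expect to obtain
\[
\epsilon^3 y \;=\; y \epsilon^3 + (1-\omega)(1 + \omega + \omega^2)\,\gamma\, \epsilon^2 \;=\; y \epsilon^3,
\]
where the correction term vanishes because $1 + \omega + \omega^2 = 0$. Since $A$ is generated by $x$ and $y$, this suffices.

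Finally, for the symbol algebra identification over the generic point, the three relations $x^3 \in Z$, $\epsilon^3 \in Z$, and $\epsilon x = \omega x \epsilon$ yield a well-defined $(Z \otimes k(U))$-algebra homomorphism from the degree-three symbol algebra $(x^3, \epsilon^3)_{3, Z \otimes k(U)}$ into $A_\Delta \otimes k(U)$ sending the symbol generators to $x$ and $\epsilon$. The symbol algebra is central simple of dimension $9$ over $Z \otimes k(U)$, so the map is automatically injective. For surjectivity, I would argue by a rank count: by \Cref{thm:basisofA}, $A$ is free of rank $18$ over $\mathcal{S}$, and $Z = \mathcal{S}[\delta^3]$ is free of rank $2$ over $\mathcal{S}$ by the quadratic relation established in the proof of \Cref{thm:center-of-A}, so $A$ has generic rank $9$ over $Z$. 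By \Cref{thm:ADelta-is-Azumaya}, $A_\Delta$ is Azumaya, hence of constant rank $9$ on $\Spec Z_\Delta$; equality of dimensions over the generic fiber then upgrades the injection to an isomorphism. The main obstacle I anticipate is keeping track of which presentation of $\gamma$ is used at each stage of the second commutation computation; everything else is bookkeeping.
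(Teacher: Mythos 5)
Your proof is correct and follows essentially the same route as the paper: direct expansion for the two commutation identities (using $x^3$ central for the first and the two presentations of $\gamma$ for the second), then iteration with $1+\omega+\omega^2=0$ for centrality of $\epsilon^3$; your version is in fact slightly more careful in carrying the $\epsilon^2$ factors on the correction term, which the paper's display drops. The concluding rank-count argument for the symbol-algebra identification is a reasonable way to fill in the step the paper leaves implicit, though you should phrase the injectivity/dimension comparison over $\operatorname{Frac}(Z)$ rather than over $Z\otimes k(U)$, which is only a Dedekind domain.
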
 

\begin{proof} We need to show that $\epsilon^3$ is central in $A$. Let us first see how $\epsilon$ interacts with $x$ and $y$:   
	$$\begin{gathered} \epsilon x =  xyx^2 + \omega x^2 yx + \omega^2  yx^3  = x \left( yx^2 + \omega xyx + \omega^2 x^2 y \right) = \omega x \epsilon,\\
 	\epsilon y = (xy)^2 + \omega x^2 y^2 + \omega^2 yx^2 y = \gamma + y^2x^2 - \omega \gamma + \omega (yx)^2 + \omega^2 yx^2y = \omega y \epsilon + \gamma - \omega \gamma.\end{gathered} $$ This implies that $\epsilon^3 x = x \epsilon^3$ and 
	$$	\epsilon^3 y = \omega \epsilon^2 y \epsilon + \gamma - \omega \gamma = \omega^2 \epsilon y \epsilon^2 + \omega \gamma - \omega^2 \gamma + \gamma - \omega \gamma = y \epsilon^3 + \omega^2 \gamma - \gamma - \omega^2 \gamma + \gamma = y \epsilon^3.$$ Thus $\epsilon^3$ fixes both $x$ and $y$ and it is therefore central in $A$.  
\end{proof} 

\begin{prop}\label{prop:A-otimes-ZDelta-is-a-maximal-order}
	$A \otimes_Z Z_{(\Delta)}$ is a maximal order. 
\end{prop}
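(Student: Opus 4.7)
The plan is to verify maximality locally at the prime $(\Delta)$ and combine this with reflexivity to invoke the standard Auslander--Goldman criterion. First, I would check that $Z_{(\Delta)}$ is a discrete valuation ring. The subscheme $\{\Delta = 0\} \subset \Spec Z$ is cut out by the equation $s^2 = \gamma^3$, which is irreducible, so $(\Delta)$ is a height-one prime in $Z$. The partial derivative $\partial_s(s^2 - \gamma^3 - \tfrac14 \Delta) = 2s$ is a unit at the generic point of $\{\Delta = 0\}$ (where $s^2 \equiv \gamma^3 \not\equiv 0$), so $Z$ is regular at $(\Delta)$ and $Z_{(\Delta)}$ is a DVR.

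Next, by \Cref{lem:A-is-a-reflexive-Z-module}, $A$ is reflexive over $Z$. Since reflexivity is preserved under localization, $A \otimes_Z Z_{(\Delta)}$ is reflexive over the DVR $Z_{(\Delta)}$, and hence free as a $Z_{(\Delta)}$-module. By the preceding lemma, the generic fiber $A \otimes_Z \mathrm{Frac}(Z)$ is identified with the symbol algebra $(x^3, \epsilon^3)_3$ of degree three, so $A \otimes Z_{(\Delta)}$ is a free $Z_{(\Delta)}$-lattice of rank nine inside this cyclic algebra.

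To verify maximality, I would produce an explicit description of $A \otimes Z_{(\Delta)}$ using the generators $x$ and $\epsilon$: from the relations $\epsilon x = \omega x \epsilon$ and $x^3,\epsilon^3 \in Z$, the subalgebra $Z_{(\Delta)}\langle x, \epsilon \rangle$ is a free $Z_{(\Delta)}$-module of rank at most nine contained in $A \otimes Z_{(\Delta)}$. A dimension count would then show these coincide, and the classical theory of cyclic-algebra orders over DVRs (see Reiner, \emph{Maximal Orders}) classifies such orders $R\langle x, y \rangle$ in $(a,b)_p$ as maximal precisely when the valuations of $a$ and $b$ lie in an admissible range; here $v_{(\Delta)}(x^3) = 0$, and the valuation of $\epsilon^3$ can be read off from the explicit formula for $\epsilon^3$ in the basis of \Cref{thm:basisofA} together with the defining equation $s^2 = \gamma^3 + \tfrac14\Delta$ of the center.

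The main obstacle is the third step: verifying that $Z_{(\Delta)}\langle x, \epsilon\rangle$ really fills out all of $A \otimes Z_{(\Delta)}$ (equivalently, that $y$ and the remaining basis elements of \Cref{thm:basisofA} become expressible in terms of $x$, $\epsilon$ after inverting elements outside $(\Delta)$) and that the valuation data of $\epsilon^3$ satisfies the admissibility hypothesis in the classification. The ramification at $(\Delta)$ must be nontrivial since $A$ is not Azumaya there, so the relevant maximal order is the ``ramified'' standard form, and pinning down that $A \otimes Z_{(\Delta)}$ realizes exactly this form is the technical heart of the argument.
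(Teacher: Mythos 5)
Your high-level strategy matches the paper's: reduce to the cyclic subalgebra $Z_{(\Delta)}\langle x, \epsilon\rangle$ generated by $x$ and $\epsilon$ inside the symbol algebra and show it is a maximal order. The paper packages this as a crossed product $\Delta(f, Z_{(\Delta)}[x], G)$ over the unramified cubic extension $Z_{(\Delta)}[x]$, computes the inertia subgroup (all of $G$) and the conductor group (trivial), and then invokes Williamson's Proposition 3.4 on crossed products and hereditary orders to conclude maximality. You gesture instead at Reiner's classification of orders in cyclic algebras over a DVR by a valuation criterion, which is the same circle of ideas, so the route is essentially parallel.

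However, there is a genuine logical misstep in how you arrange the argument. You propose to first establish the equality $Z_{(\Delta)}\langle x, \epsilon\rangle = A \otimes_Z Z_{(\Delta)}$ by a ``dimension count'' (both are free $Z_{(\Delta)}$-lattices of rank nine), and only afterward verify maximality. That count does not work: two free $Z_{(\Delta)}$-modules of the same rank, one contained in the other, need not be equal (consider $\mathfrak{m} \subset Z_{(\Delta)}$). Worse, you identify closing this gap as the technical heart, but it is not actually needed. The correct logical order, and the one the paper uses, is the reverse: first show the subalgebra $Z_{(\Delta)}\langle x, \epsilon\rangle$ is itself a \emph{maximal} order (via the ramification/conductor computation), and then observe that since $A \otimes Z_{(\Delta)}$ is an order containing a maximal order, equality is forced with no extra work. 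So the part of the argument you defer (verifying the valuation/admissibility hypothesis, i.e.\ the Williamson-style computation of the inertia and conductor data) is precisely the content that is missing, while the part you single out as the obstacle (filling out $A \otimes Z_{(\Delta)}$) dissolves once the maximality of the subalgebra is in hand. Two smaller remarks: your opening appeal to the Auslander--Goldman criterion belongs to the subsequent global theorem, not to this local proposition; and your DVR check via the Jacobian criterion at the generic point of $\{\Delta = 0\}$ is a fine alternative to the paper's appeal to primality of $\Delta$ plus Krull's PIT, arguably cleaner since it addresses regularity directly.
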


\begin{proof} 
	By \cite[Chapter 1, Example 1.4]{Gelfand-Kapranov-Zelevinski:Discriminats-resultants-and-multidimensional-determinants} the discriminant $$\Delta = -27 x^6 y^6 +18 x^3y^3 \alpha\beta -4x3 \beta^3-4y^3\alpha^3$$ is prime over the center $Z$. By degree consideration $\epsilon^3$ does not lie in the ideal generated by $\Delta$. By Krull's principal ideal theorem the ideal $(\Delta)$ is minimal and therefore $Z_{(\Delta)}$ is a discrete valuation ring. \\
	
	Let $G$ be the Galois group of $K$ over the quotient field of $Z_{(\Delta)}$ obtained by adjoining $x$. Then $G$ is cyclic of degree $3$ with generator $\sigma$. Furthermore, the integral closure of $Z_{(\Delta)}$ in $K$ is $Z_{(\Delta)}[x]$. Define an element $[f]$ in $H^2\left(G,\left(Z_{(\Delta)}\right)^\times \right)$ by 
	$$f: G \times G \ra \left(Z_{(\Delta)}\right)^\times: \left(\sigma^i, \sigma^j\right)\mapsto \begin{cases} \epsilon^3 & i+j \geq 3\\ 1 & i+j < 3 \end{cases}.$$
	Consider the crossed product algebra $\Delta(f,Z_{(\Delta)}[x],G)$, which is isomorphic to the symbol algebra $(x^3, \epsilon^3)_{Z_{(\Delta)}} = Z_{(\Delta)}\left< x,\epsilon \right> \subseteq A \otimes Z_{(\Delta)}$. Let $\Gamma$ be the maximal order containing the crossed product algebra $\Delta(f,Z_{(\Delta)}[x],G)$. Note that the inertia subgroup is $G_I = G$ and the conductor group of $\Delta(f,Z_{(\Delta)}[x],G)$ is trivial. By \cite[Proposition 3.4]{Williamson:Crossed-products-and-hereditary-orders} we deduce that $$Z_{(\Delta)}[x] = \left\{ s \in Z_{(\Delta)}[x]: s \epsilon \subseteq\Delta(f,Z_{(\Delta)}[x],G)\right\}.$$ In particular, $\Gamma  \subseteq \Delta(f,Z_{(\Delta)}[x],G) = A \otimes Z_{(\Delta)}$ and therefore $A \otimes Z_{(\Delta)}$ is a maximal order. 
\end{proof} 

\begin{thm}$A$ is a maximal order over its center. \end{thm}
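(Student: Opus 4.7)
The plan is to invoke the Auslander--Goldman criterion: if $R$ is a Noetherian normal domain with field of fractions $K$ and $\Lambda$ is a reflexive $R$-order in a central simple $K$-algebra, then $\Lambda$ is a maximal $R$-order if and only if $\Lambda_{\mathfrak p}$ is a maximal $R_{\mathfrak p}$-order for every height-one prime $\mathfrak p$ of $R$. First I would verify the hypotheses: $Z$ is Noetherian (it is a finitely generated $k$-algebra by \Cref{thm:center-of-A}), and it is normal because the explicit equation $s^2 = \gamma^3 + \tfrac{1}{4}\Delta$ presents $Z$ as a hypersurface in $\mathbb{A}^6$ whose singular locus is cut out by $s=0$, $\gamma=0$, and the simultaneous vanishing of $\Delta$ together with all its partials on $\mathbb{A}^4$, which has codimension at least two. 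Reflexivity of $A$ as a $Z$-module is exactly \Cref{lem:A-is-a-reflexive-Z-module}.

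Next, the height-one primes $\mathfrak p$ of $Z$ split into two classes. If $\Delta \notin \mathfrak p$, then $A \otimes_Z Z_{\mathfrak p}$ is a localization of $A_\Delta$ and hence Azumaya by \Cref{thm:ADelta-is-Azumaya}; an Azumaya algebra over a local normal Noetherian domain is automatically a maximal order, so this case is immediate. The remaining case uses that $\Delta$ is prime in $Z$ (as invoked in the proof of \Cref{prop:A-otimes-ZDelta-is-a-maximal-order}), which by Krull's principal ideal theorem means $(\Delta)$ is the unique height-one prime of $Z$ containing $\Delta$; at this prime, \Cref{prop:A-otimes-ZDelta-is-a-maximal-order} already establishes that $A \otimes_Z Z_{(\Delta)}$ is a maximal order.

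Combining the two cases with the Auslander--Goldman criterion yields that $A$ is a maximal order over its center. The genuinely difficult work, namely the analysis at the ramification divisor $\{\Delta = 0\}$, has already been carried out in \Cref{prop:A-otimes-ZDelta-is-a-maximal-order} via the crossed product/symbol algebra description; this final theorem is primarily an exercise in assembling reflexivity of $A$, the Azumaya property away from the discriminant, and the local maximality at $(\Delta)$. The main subtlety to watch for is ensuring that no other height-one prime of $Z$ lies in the ramification locus, which is exactly where the primality of $\Delta$ intervenes.
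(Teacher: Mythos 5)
Your proposal is essentially the paper's own proof: both invoke the Auslander--Goldman local characterization \cite[Theorem 1.5]{Auslander-Goldman:Maximal-Orders}, combine reflexivity of $A$ (\Cref{lem:A-is-a-reflexive-Z-module}) with the Azumaya property away from $\Delta$ (\Cref{thm:ADelta-is-Azumaya}) and local maximality at $(\Delta)$ (\Cref{prop:A-otimes-ZDelta-is-a-maximal-order}), and the primality of $\Delta$ reduces the height-one primes to exactly this dichotomy. The only difference is that you spell out the verification that $Z$ is a Noetherian normal domain, which the paper takes as understood.
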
 

\begin{proof}
	We proved in \Cref{lem:A-is-a-reflexive-Z-module} that $A$ is a reflexive $Z$-module. Let $\mathfrak{p}$ be a minimal prime in $Z$. If $\mathfrak{p} = (\Delta)$, then $A \otimes Z_{(\Delta)}$ is a maximal order by \cref{prop:A-otimes-ZDelta-is-a-maximal-order}. Furthermore, if $\mathfrak{p} \neq (\Delta)$, then $A \otimes Z_{\mathfrak{p}}$ is Azumaya and therefore in particular, it is a maximal oder. Using \cite[Theorem 1.5]{Auslander-Goldman:Maximal-Orders}, we deduce that $A$ is a maximal order over its center. 	
\end{proof}
\section{The Moduli Problem}\label{sect:moduli} 

The main statement of this section is the following theorem which gives a correspondence between the moduli stack $\left[U/GL_2\right]$ and pairs $(E,\alpha)$, where $E$ is an elliptic curve of $j$-invariant $0$, and $\alpha$ is a Brauer class on $E$ that is invariant under complex multiplication. 

\begin{thm}\label{thm:Moduli-space} 
	Consider the set of pairs $\left( E, \alpha\right),$ where $E$ is an elliptic curve over $k$ of $j$-ivariant $0$,  and $\alpha$ is a $3$-torsion Brauer class in $\Br(E)/\Br_0(E)$ invariant under complex multiplication $\theta$. Identify two pairs $(E,\alpha)$ and $(E', \alpha')$ if there is an isomorphism $\phi: E \rightarrow E'$ and the induced isomorphism $\Br(E') \rightarrow \Br(E)$ identifies $\alpha$ and $\alpha'$. There is a bijection between the set of equivalence classes of pairs $(E,\alpha)$ and the set of $\GL_2(k)$-orbits of nondegenerate binary cubic forms. 
\end{thm}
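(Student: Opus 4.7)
The plan is to construct explicit maps in both directions, check each is well-defined on the stated equivalence classes, and verify they are mutually inverse. For the forward direction, to a nondegenerate binary cubic form $f(u,v)=au^3+3bu^2v+3cuv^2+dv^3$ I would associate the pair $(E_f,\alpha_f)$, where $E_f=\Jac(C_f)$ for the smooth genus-one curve $C_f\colon w^3=f(u,v)$, and $\alpha_f\in\Br(E_f)$ is the class of the generalized Clifford algebra $A_f$ (known to be Azumaya over $E_f$ minus the zero section, and extendable by Haile's results). I would verify: (i) $E_f$ has $j$-invariant $0$ by reading off its Weierstrass form $s^2=\gamma^3+\tfrac14\Delta(f)$; (ii) $\alpha_f$ is $3$-torsion since $A_f$ has degree $3$ over its center; (iii) $\alpha_f$ is invariant under complex multiplication $\theta$, because the natural $\ZZ/3$-action $w\mapsto \omega w$ on $C_f$ induces both $\theta$ on $E_f$ and an automorphism of $A_f$ which acts trivially on the Brauer class; and (iv) replacing $f$ by $f\circ M$ for $M\in\GL_2(k)$ yields a canonical isomorphism of curves $C_{f\circ M}\cong C_f$ intertwining the Clifford algebras, so $(E_{f\circ M},\alpha_{f\circ M})\cong(E_f,\alpha_f)$. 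This gives a well-defined map on $\GL_2$-orbits.

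For the reverse direction, starting from a pair $(E,\alpha)$ with $E$ of $j$-invariant $0$, I would use the fact that $E$ has a rational point (its origin) to split the low-degree Hochschild--Serre exact sequence into $0\to\Br(k)\to\Br(E)\to H^1(k,E)\to 0$. Sending $\alpha$ to its image $\bar\alpha\in H^1(k,E)$ produces a $3$-torsion torsor $X$ under $E$, and $\theta$-invariance of $\alpha$ passes to $\bar\alpha$, so $X$ is a \emph{cyclic} twist of $E$ in the sense of Ciperiani--Krashen and Haile--Han--Wadsworth. By their explicit description (recalled in \Cref{appendix}), every such $X$ admits a model of the form $w^3=f(u,v)$ for some nondegenerate binary cubic $f$, determined up to the $\GL_2(k)$-action on the variables. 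This yields the inverse map.

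To check the two maps are mutually inverse, I would use the compatibility between $\alpha_f$ and the torsor class of $C_f$: under $\Br(E_f)\to H^1(k,E_f)$, the class $\alpha_f$ sends to $[C_f]$, so starting with $f$ and following the reverse procedure recovers $C_f$, hence $f$ up to $\GL_2(k)$-action. Conversely, beginning with $(E,\alpha)$, producing $f$, and then forming $(E_f,\alpha_f)$ yields a Brauer class with the same image $\bar\alpha$ in $H^1(k,E)$, so the two classes differ by an element of $\Br(k)=\Br_0(E)$ and agree in $\Br(E)/\Br_0(E)$. The main obstacle I anticipate is the reverse direction: translating CM-invariance of $\alpha\in\Br(E)/\Br_0(E)$ into the concrete structural statement that $X$ is a cyclic twist, and then tracking exactly which $\GL_2$-orbit of cubic forms produces it, requires a careful compatibility check between the CM-action on $\Br(E)$, the induced action on $H^1(k,E)$, and the $\ZZ/3$-symmetry of the model $w^3=f(u,v)$. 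The appendix on coverings of elliptic curves and the Haile--Han--Wadsworth description of cyclic twists should together supply exactly the parametrization needed to close this gap.
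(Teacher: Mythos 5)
Your outline reproduces the paper's architecture almost exactly: forward map via $(E_f,\alpha_f)$ with $E_f = \Jac(C_f)$ and $\alpha_f$ the Clifford class; reverse map via the Hochschild--Serre sequence and the identification of $\theta$-invariant $3$-torsion torsors with plane cubics $w^3=f(u,v)$; mutual inverses via $\kappa(\alpha_f)=[C_f]$. The paper indeed cites Haile--Han--Wadsworth \cite[Theorem 6.1]{Haile-Wadsworth-Han:Cyclic-Twists} for the reverse step before giving its own geometric argument, so your route is one the paper explicitly endorses.

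The obstacle you flag at the end, however, is a genuine gap rather than just a ``compatibility check,'' and it is worth naming precisely. Two separate things are needed in the reverse direction. First, that $\theta$-invariance of $\bar\alpha \in H^1(k,E)$ is \emph{equivalent} to lying in the image of the connecting map $\delta$ for the isogeny $\lambda = \theta-[1]$; the paper establishes this via the $\lambda$-covering formalism of \Cref{prop:torsor-covering}, and this is what makes ``cyclic twist'' applicable. Second, and more substantially, you need that the cubic form $f$ with $X \cong C_f$ is unique up to the $\GL_2(k)$-action. This is \emph{not} an immediate consequence of the Haile--Han--Wadsworth parametrization alone: a torsor isomorphism $C_f \to C_g$ need not a priori respect the two degree-$3$ polarizations, and if it does not, it would not extend linearly to $\mathbb{P}^2$. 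The paper closes this via \Cref{prop:iso-SBV}, whose proof runs through a case analysis of the order-$3$ action on $H^0(C_f,\mathcal{O}(1))$ and $H^0(C_f,\mathcal L)$ by diagonalizable matrices $B$ with $B^3=\mathbf 1$, ruling out $\phi^*\mathcal{O}_{C_g}(1)\neq\mathcal{O}_{C_f}(1)$ and hence forcing a linear change of variables by \Cref{thm:iso-of-Clifford-alg}. Without this eigenspace argument (or an explicit substitute from the cited references), the map from pairs $(E,\alpha)$ to $\GL_2$-orbits is not yet well-defined, and the bijection is not established. Your point (iii) --- that the Brauer class is $\theta$-invariant because $w \mapsto \omega w$ ``acts trivially on the Brauer class'' --- also needs justification; the honest argument is functoriality: $w\mapsto\omega w$ is a $k$-isomorphism $C_f\to C_f$ under which the Clifford construction is canonical, and it induces $\theta$ on $E_f$, so $\theta^*\alpha_f=\alpha_f$.
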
 

This correspondence is given as follows: Let $f$ be a nondegenerate binary cubic form. The Jacobian of the genus one curve defined by $w^3-f(u,v)$ is a $j$-invariant $0$ elliptic curve $E$. As $E$ has $j$-invariant $0$, it is equipped with an order $3$ automorphism $\theta$. We show in this section that the Clifford algebra associated to $f$ defines a $\theta$-invariant $3$-torsion Brauer class $\alpha$ on $E$. On the other hand, let $(E,\alpha)$ be as in the statement and consider the short exact sequence induced by the Hochschild-Serre spectral sequence 
\begin{equation}\label{eq:SES}\xymatrix{0 \ar[r] & \Br(k) \ar[r] & \Br(E) \ar[r]^{\kappa\phantom{klkjl}} & H^1\left( k, E(\kbar)\right) \ar[r] & 0  }.\end{equation} 
We prove in this section that the image of $\alpha$ under $\kappa$ can be described as a $k$-torsor under $E(\kbar)$ with equation $w^3-f(u,v)$, where $f(u,v)$ is a nondegenerate binary cubic form. 

We proved in \Cref{thm1} that the  binary cubic generic Clifford algebra $A$ defines a Brauer class $\alpha_0$ over a relative elliptic curve $J$ over $U$. We will equip $J$ with a $\GL_2$-action that is compatible with the action on $U$ and show that the Brauer class $\alpha_0$ descends to the quotient. 

\subsection{The Action of $\GL_2$ on Binary Cubic Forms}\label{sec:Action-on-U} 
Recall that $U = \Spec R_\Delta$, where $R= k \left[ x^3, \alpha, \beta, y^3\right]$, is identified with the space of nondegenerate binary cubic forms. Let $\mathfrak{g} = \begin{pmatrix} a& b \\ c &d \end{pmatrix} \in \GL_2(k)$ and $r \in R$. Define the action on $R$ by acting linearly on the coordinates $x$ and $y$, i.e. 
$\mathfrak{g}.x = ax+ cy$ and 
$\mathfrak{g}.y = bx+ dy$. Direct computation shows that the action of $\mathfrak{g}$ on the basis $\left\{x^3,\alpha,\beta,y^3\right\}$ of $R$ is given by the matrix 
\begin{align}
\begin{pmatrix} 
a^3 & 3a^2b & 3ab^2 & b^3 \\
a^2c & a(ad+2bc) & b(bc+2ad) & b^2 d\\
ac^2 & c(bc+2ad) & d(ad+2bc) & bd^2 \\
c^3 & 3c^2d & 3cd^2 & d^3 
\end{pmatrix}\label{matrixaction}
\end{align}

\begin{lemma}The above action fixes the ideal $\left(\Delta\right)$. In particular, the action is well-defined on $U$. 
\end{lemma}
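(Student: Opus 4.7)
The plan is to reduce the claim to the classical transformation law for discriminants of binary forms. Under the identification fixed at the start of \Cref{sect:moduli}, a point with coordinates $(x^3,\alpha/3,\beta/3,y^3) = (a,b,c,d) \in \mathbb{A}^4$ corresponds to the form $f(u,v) = au^3 + 3bu^2v + 3cuv^2 + dv^3$. A direct expansion in $A$, using only centrality of $x^3, y^3, \alpha, \beta$, shows for instance that
$$\mathfrak{g}\cdot x^3 = (ax+cy)^3 = a^3 x^3 + a^2 c\,\alpha + a c^2\,\beta + c^3 y^3,$$
and analogous computations for $\alpha, \beta$ and $y^3$ recover the matrix \eqref{matrixaction}. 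These formulas identify the $\GL_2$-action on $R$ with the classical substitution action $f(u,v) \mapsto f(au+cv,\,bu+dv)$ on the space of binary cubic forms.

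Once this identification is in hand, the lemma reduces to the standard invariant-theoretic fact that under such a substitution the discriminant of a binary form of degree $n$ is scaled by $(\det \mathfrak{g})^{n(n-1)}$. For $n=3$ this yields
$$\mathfrak{g}\cdot \Delta = (\det \mathfrak{g})^{6}\, \Delta.$$
Since $\det\mathfrak{g}\in k^\times$, this scalar is a unit of $R$, so the principal ideal $(\Delta)$ is preserved by every $\mathfrak{g} \in \GL_2(k)$. The ring automorphism of $R$ defined by $\mathfrak{g}$ therefore extends uniquely to the localization $R_\Delta$ by sending $\Delta^{-1}$ to $(\det\mathfrak{g})^{-6}\,\Delta^{-1}$, which gives the well-defined action on $U = \Spec R_\Delta$.

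No substantive obstacle is expected; the content is a repackaging of classical invariant theory for binary cubic forms. If one prefers not to quote the transformation formula as a black box, the same conclusion is available geometrically: $V(\Delta)\subset \mathbb{A}^4(\kbar)$ is exactly the locus of binary cubics admitting a repeated linear factor, a property manifestly preserved under any invertible linear change of variables, and $\Delta$ is a well-known irreducible polynomial in the four coefficients, so $\mathfrak{g}\cdot \Delta$ must be a $k^\times$-multiple of $\Delta$ for every $\mathfrak{g}$. The precise factor $(\det\mathfrak{g})^{6}$ is not needed for the statement of the lemma, but can be confirmed directly from \eqref{matrixaction} if desired.
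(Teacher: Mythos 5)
Your argument is correct and lands on the same identity $\mathfrak{g}.\Delta = (\det\mathfrak{g})^6\,\Delta$ that the paper's proof records. The paper simply asserts this factor by a direct (unwritten) computation, whereas you first make explicit that the $\GL_2$-action on $R$ realizes the classical substitution action on binary cubic forms (your expansion $(ax+cy)^3 = a^3x^3 + a^2c\,\alpha + ac^2\,\beta + c^3y^3$ does match the first column of the matrix in the text, read as acting on a row vector from the right) and then quote the standard weight-$n(n-1)$ transformation law for the discriminant of a degree-$n$ binary form. This is the same content, but the packaging makes the exponent $6$ transparent rather than an outcome of brute-force algebra. Your geometric fallback is a genuinely different route: $V(\Delta)$ is the locus of cubics with a repeated linear factor, visibly stable under any invertible linear substitution, and $\Delta$ is irreducible of degree $4$ while $\mathfrak{g}.\Delta$ has the same degree, so $\mathfrak{g}.\Delta$ is forced to be a $k^{\times}$-multiple of $\Delta$. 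This proves the lemma as stated with no computation at all; its only drawback is that it does not by itself produce the explicit factor $(\det\mathfrak{g})^6$, which the paper reuses later when describing the induced $\GL_2$-action on the relative elliptic curve $J$.
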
 

\begin{proof} 
	Let $\mathfrak{g} = \begin{pmatrix} a& b \\ c &d \end{pmatrix} \in \GL_2(k)$. By a direct computation 
	\begin{align*} 
	\mathfrak{g}.\Delta(x^3, \alpha, \beta, y^3) &= \mathfrak{g}. \left( -27 x^6y^6 + \alpha^2 \beta^2 + 18 x^3 \alpha \beta y^3 -4x^3 \beta^3 -4\alpha^3 y\right)\\
	&=  \left( \det(\mathfrak{g})\right)^6 \Delta(x^3, \alpha, \beta, y^3) 
	\end{align*}
	This implies that $\mathfrak{g}$ takes $(\Delta)$ to $(\Delta)$. 
\end{proof}  

We now proceed to compute the stabilizer of a point $f \in U$ under the above action of $\GL_2(k)$. Suppose first that $f(u,v) = p u^3 + r v^3$ is diagonal over $k$ and let $\mathfrak{g} = \begin{pmatrix} a&b \\ c&d\end{pmatrix} \in \stab_{GL_2(k)} (f)$. In this case,
\begin{equation}\label{eqn:stabilizer}
a^2c p + b^2 d r = ac^2 p + bd^2 r = 0.
\end{equation}
As $f$ is a nondegenerate binary cubic form, both $p$ and $r$ are nonzero. Assume first that all $a,b,c,$ and $d$ are nonzero, then the  above implies that 
$\frac{b^2 d}{a^2 c} = \frac{bd^2}{ac^2}$ and hence $\frac{b}{a} = \frac{d}{c},$ which is impossible as $\mathfrak{g}$ has nonzero determinant. Thus at least one of $a,b,c,$ and $d$ is zero. If $a =0$, then both $b$ and $c$ are nonzero. By \cref{eqn:stabilizer}, we see that $d=0$. Further as $\mathfrak{g}$ fixes $f$, in this case $b^3 r = p$ and $c^3 p = r$. This implies that $b^3 = \frac{1}{c^3} = \frac{p}{r}$. Similarly if $b=0$, then $c=0$ and $a^3 = d^3 = 1$. \\

Summarizing, let $f$ be the diagonal form $f(u,v) = pu^3 + rv^3$. Assume first that there is no $\lambda \in k$ with $\lambda^3 = \frac{p}{r}$, then 
$$\stab_{\GL_2(k)} (f) = \left\{ \begin{pmatrix} 
\omega^i & 0 \\
0 & \omega^j \end{pmatrix}: i,j \in 0,1,2 \right\}
\cong \ZZ/3\ZZ \times \ZZ/3\ZZ.$$
If there is a $\lambda \in k$ with $\lambda^3 = \frac{p}{r}$, then the stabilizer of $f$ is 
\begin{align*} 
	\stab_{\GL_2(k)} (f) &= \left\{ \begin{pmatrix} 
\omega^i & 0 \\
0 & \omega^j \end{pmatrix}: i,j \in 0,1,2 \right\}
\cup 
\left\{
\begin{pmatrix} 
0 & \omega^i \lambda \\
\frac{\omega^j}{\lambda} & 0 
\end{pmatrix} 
: i,j \in 0,1,2 \right\}\\ 
&\cong \left( \ZZ/3\ZZ \times \ZZ/3\ZZ \right) \rtimes \ZZ/2\ZZ.
\end{align*} 
Remark in particular, that in both cases the stabilizer of $f$ is finite. \\

If $f$ is a nondegenerate binary cubic form, that is not necessarily diagonal, then there is some finite extension $F$ of $k$ so that there is some $\mathfrak{h} \in \GL_2(F)$ with $\mathfrak{h}.f$ diagonal. Now 
$$\stab_{\GL_2(k)} (f) \subset \stab_{\GL_2(F)} (f) \cong \stab_{\GL_2(F)} \mathfrak{h}.f,$$ which is finite. Hence the stabilizer for every $f \in U$ is finite and we deduce the following proposition. 

\begin{prop}\label{prop:Deligne} 
	The quotient stack $[U/\GL_2]$ is a Deligne-Mumford stack.
\end{prop}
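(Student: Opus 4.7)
The plan is to apply the standard criterion: a quotient stack $[X/G]$, with $G$ a smooth affine group scheme acting on a finite-type separated $k$-scheme $X$, is Deligne-Mumford if and only if every geometric stabilizer group scheme $\stab_G(x)$ is finite and unramified (equivalently, finite \'etale). Since $\GL_2$ is smooth affine and $U$ is an open subscheme of $\mathbb{A}^4$, the quotient $[U/\GL_2]$ is automatically an algebraic stack, so only the stabilizer conditions remain to be checked.

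The finiteness of $\stab_{\GL_2(k)}(f)$ has just been established, and I would extend it to geometric points by noting that any $f \in U(\kbar)$ is $\GL_2(\kbar)$-conjugate to a diagonal form $\tilde f = pu^3 + rv^3$ (diagonalizability over $\kbar$ appears in the proof of \Cref{lemma:Cffree}). Conjugation by the diagonalizing matrix identifies the corresponding stabilizers, and since $\lambda \in \kbar$ with $\lambda^3 = p/r$ always exists over the algebraic closure, the geometric stabilizer is the constant finite group $(\mathbb{Z}/3\mathbb{Z} \times \mathbb{Z}/3\mathbb{Z}) \rtimes \mathbb{Z}/2\mathbb{Z}$ of order $18$.

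To upgrade set-theoretic finiteness to \'etaleness of the stabilizer group scheme, I would argue either by direct inspection or via Lie algebras. Directly, the scheme $\stab_{\GL_2}(\tilde f)$ is cut out inside $\GL_2$ by the four equations obtained by setting the matrix action (\ref{matrixaction}) on $(p, 0, 0, r)$ equal to $(p, 0, 0, r)$; a calculation in characteristic $\neq 2,3$ identifies the components of this closed subscheme with copies of $\mu_3 \times \mu_3$, which is \'etale when $\mathrm{char}(k)\neq 3$. Alternatively, one computes the Lie algebra stabilizer $\{X \in \mathfrak{gl}_2 : X \cdot \tilde f = 0\}$ and checks it vanishes in this characteristic, forcing the stabilizer group scheme to be smooth of dimension zero, hence \'etale. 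Conjugation transports these conclusions to the stabilizer at an arbitrary geometric point $f$.

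The main (mild) obstacle is precisely this passage from set-theoretic finiteness to scheme-theoretic \'etaleness: in characteristic $2$ or $3$ the subgroup scheme could acquire infinitesimal thickenings (e.g., $\mu_3$ becomes non-\'etale in characteristic $3$), and the quotient would be merely algebraic rather than Deligne-Mumford. The standing hypothesis $\mathrm{char}(k)\neq 2, 3$ built into the paper is exactly what rules this out.
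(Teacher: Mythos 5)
Your proof follows the same basic strategy as the paper (analyze stabilizers of binary cubic forms to verify the Deligne--Mumford condition), but it is noticeably more careful than what the paper actually records, and the extra care is substantive. The paper computes $\stab_{\GL_2(k)}(f)$ for a diagonal $k$-form, observes it is a finite group, reduces the general case to the diagonal one over a finite extension, and then simply announces the proposition as a consequence; it never addresses either (i) the passage from $k$-rational stabilizers to geometric stabilizer \emph{group schemes}, or (ii) the distinction between the stabilizer being finite and being finite \emph{\'etale}. Both are genuinely needed for the Deligne--Mumford criterion, and both can fail in bad characteristic. You address (i) by diagonalizing over $\kbar$ and transporting by conjugation, and (ii) either by identifying the stabilizer scheme componentwise as $\mu_3 \times \mu_3$ (\'etale away from characteristic $3$) or by the Lie-algebra computation; the latter is clean: for a diagonal form $pu^3 + rv^3$, the infinitesimal action of $X = \begin{pmatrix} a & b \\ c & d\end{pmatrix}$ produces $3apu^3 + 3cpu^2v + 3bruv^2 + 3drv^3$, which vanishes only for $X = 0$ because $p, r \neq 0$ and $3$ is invertible, so the stabilizer scheme is unramified. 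In short: same route, but you fill in the scheme-theoretic steps the paper leaves implicit, and you correctly isolate where the standing hypothesis $\mathrm{char}(k)\neq 3$ is actually used.
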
 

We now proceed to show that the orbits of $U$ with respect to this action of $\GL_2$ correspond to isomorphism classes of Severi-Brauer varieties. 


\subsection{Isomorphisms of Severi-Brauer Varieties} 
\label{sec:IsoClifford}

Let $f$ and $g$ be nondegenerate binary cubic forms and assume that their Clifford algebras $A_f$ and $A_g$ are isomorphic. This induces an isomorphism $\phi: C_f \rightarrow C_g$ of the associated Brauer-Severi varieties. In this part, we show the following statement.

\begin{prop}\label{prop:iso-SBV}
	Let $f$ and $g$ be nondegenerate binary cubic forms. Their Clifford algebras $A_f$ and $A_g$ are isomorphic if and only if there is some $\mathfrak{g} \in \GL_2(k)$ so that $\mathfrak{g}.f = g$, where the action is given by a linear change of variables. 
\end{prop}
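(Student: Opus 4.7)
The plan is to prove the two directions separately; the first is a direct consequence of the universal property of the Clifford algebra, while the second is the substantive one. For the forward direction, if $g = \mathfrak{g}.f$ for some $\mathfrak{g} \in \GL_2(k)$, I would observe that the linear change of variables on $V$ induced by $\mathfrak{g}$ carries the defining relations $v^3 = f(v)$ of $A_f$ to $(\mathfrak{g} v)^3 = g(\mathfrak{g} v)$, so it descends to an algebra isomorphism $A_f \xrightarrow{\sim} A_g$. This is precisely the content of \cite[Proposition 1]{Childs:Linearizing-of-n-ic-forms-and-generalized-clifford-algebras}, already invoked in the proof of \Cref{lemma:Cffree}.

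For the reverse direction, suppose $\Phi : A_f \to A_g$ is a $k$-algebra isomorphism. The plan is to track $\Phi$ through the geometric data attached to the Clifford algebra and extract the linear change of variables at the end. First, $\Phi$ restricts to an isomorphism of centers, and \Cref{thm:center-of-A} identifies these with (affine opens of) the Jacobians $E_f$ and $E_g$, yielding a $k$-isomorphism $\sigma : E_f \xrightarrow{\sim} E_g$ extending uniquely to the projective completions. Through $\sigma$, the algebra isomorphism $\Phi$ matches the Brauer classes $[A_f]$ and $[A_g]$. Under the Hochschild-Serre map $\kappa$ of (\ref{eq:SES}), these classes correspond to the torsor classes of $C_f : w^3 = f(u,v)$ and $C_g : w^3 = g(u,v)$ by \cite{Haile:WhenistheCliffordAlgebraofabinarycubicformsplit}, so $\Phi$ furnishes an isomorphism of $E$-torsors $\psi : C_f \xrightarrow{\sim} C_g$ covering $\sigma$.

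Next I would refine $\psi$ using the complex multiplication on the Jacobians. Since $j(E_f) = j(E_g) = 0$, each $E_\bullet$ carries a canonical order $3$ automorphism $\theta$; the resulting action on the torsor $C_\bullet$ is precisely the geometric $\mu_3$-action $(u{:}v{:}w) \mapsto (u{:}v{:}\omega w)$. Because the Brauer classes $[A_f]$ and $[A_g]$ are CM-invariant (the fact underlying \Cref{thm:Moduli-space}), $\psi$ can be chosen to commute with these $\mu_3$-actions, whence it descends to an automorphism of the quotient $\PP^1 = C_\bullet/\mu_3$. Since the projection $C_\bullet \to \PP^1$ is cut out by the two-dimensional linear subsystem $\langle u, v\rangle$ of the degree three sheaf $\O_{\PP^2}(1)|_{C_\bullet}$, the map $\psi$ preserves this sheaf and extends to a $\mu_3$-equivariant automorphism $\Psi : \PP^2 \xrightarrow{\sim} \PP^2$ carrying $C_f$ to $C_g$. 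The $\mu_3$-representation on $H^0(\PP^2, \O(1)) = \langle u, v, w\rangle$ has eigenspaces $\langle u, v\rangle$ (trivial) and $\langle w \rangle$ (nontrivial), so any $\mu_3$-equivariant element of $\PGL_3$ is block diagonal: $\Psi$ takes the form $(u,v,w) \mapsto (\tilde{\mathfrak{g}}(u,v), c\,w)$ for some $\tilde{\mathfrak{g}} \in \GL_2(k)$ and $c \in k^\times$. Matching the defining equations of $C_f$ and $C_g$ under $\Psi$ yields $c^3 f = \tilde{\mathfrak{g}}.g$, and rescaling $\tilde{\mathfrak{g}}$ by $c^{-1}$ produces the desired $\mathfrak{g} \in \GL_2(k)$ with $\mathfrak{g}.f = g$.

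The hard part will be the CM-equivariance step: confirming that the visible geometric $\mu_3$-action on $C_f$ coincides with the intrinsic torsor action induced by the complex multiplication of $E_f$, and that this structure is faithfully preserved by any $k$-algebra isomorphism of Clifford algebras (via the CM-invariance of their Brauer classes). Once this compatibility is established, the rest of the argument will be a routine representation-theoretic extraction of a $\GL_2$-matrix from a $\mu_3$-equivariant automorphism of $\PP^2$.
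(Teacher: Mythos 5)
Your forward direction is fine and is exactly what the paper does (via Childs). For the reverse direction you take a genuinely different route from the paper, and it has a real gap at precisely the point you flag as ``the hard part.''

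The paper's argument, after producing an isomorphism $\phi: C_f \to C_g$, argues by contradiction from the assumption $\phi^*(\mathcal{O}_{C_g}(1)) \neq \mathcal{O}_{C_f}(1)$. It forms the composition $\phi^{-1}\circ\eta_g\circ\phi$ (with $\eta_g: w\mapsto\omega w$), shows via a fixed-line-bundle count that the induced map $\sigma$ on $\Pic^3 C_f$ is the one coming from $w\mapsto\omega^i w$, and then runs a careful eigenspace comparison of the $\sigma$-actions on $H^0(\mathcal{O}(1))$, on $H^0(\mathcal{L})$ where $\mathcal{L}=\phi^*\mathcal{O}_{C_g}(1)$, and on $H^0(\mathcal{O}(3))$ to reach a contradiction. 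Your plan replaces that eigenspace analysis with the observation that a $\mu_3$-equivariant $\psi$ descends to $\overline{\psi}:\mathbb{P}^1\to\mathbb{P}^1$, whence $\psi^*\mathcal{O}_{C_g}(1) = \psi^*\pi_g^*\mathcal{O}_{\mathbb{P}^1}(1) = \pi_f^*\overline{\psi}^*\mathcal{O}_{\mathbb{P}^1}(1) = \mathcal{O}_{C_f}(1)$. That last computation is a nice shortcut --- but only once you actually have a $\mu_3$-equivariant $\psi$ over $k$, and that is exactly what you have not established.

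Two concrete problems with your equivariance step. First, ``the resulting action on the torsor $C_\bullet$ is precisely the geometric $\mu_3$-action'' conflates two different objects: $\theta$ is an automorphism of the Jacobian $E_f$ and does not act on the torsor $C_f$; what exists is an independent $\mu_3$-action on $C_f$ (namely $w\mapsto\omega w$) that happens to induce $\theta$ on $\Pic^0(C_f)$. CM-invariance of the class of $C_f$ in $H^1(k,E_f)$ gives $\theta_*[C_f]=[C_f]$, which yields \emph{some} isomorphism $C_f\cong\theta_*C_f$, not a canonical $\mu_3$-action nor a canonical intertwining with a given $\psi$. Second, even granting both geometric $\mu_3$-actions, the best one gets from the paper's lemma is that $\psi^{-1}\circ\eta_g\circ\psi$ and $\eta_f^{\pm1}$ induce the same map on $\Pic$; as automorphisms of $C_f$ they may still differ by a translation $t_P$ with $P\in E_f(k)$. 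To absorb $P$ by modifying $\psi$ by a translation you must solve $(\theta^i-1)(Q)=-P$ over $k$, and the obstruction lives in $H^1(k,\mathcal{T})$ with $\mathcal{T}=\ker(\theta-1)$, which need not vanish. The paper sidesteps this entirely by arguing at the level of $\Pic$ and of the global sections $H^0$, where the needed equivariance holds without any curve-level normalization. Finally, note that citing ``the fact underlying \Cref{thm:Moduli-space}'' is circular: the moduli theorem is proved downstream of \Cref{prop:iso-SBV} (via \Cref{prop:X-with-auto}), so you cannot freely invoke it here; if you want CM-invariance of $[A_f]$ at this stage, you should derive it directly from the fact that $\eta_f$ is an automorphism of $C_f$ inducing $\theta$ on $E_f$.
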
 

The main tool in our proof is the following lemma. 
\begin{lemma}\label{thm:iso-of-Clifford-alg}
	If $\phi^*\left( \mathcal{O}_{C_g} \right) = \mathcal{O}_{C_f}$, then $\phi$ extends to an isomorphism $\tilde{\phi}$ on $\mathbb{P}^2$. In particular, $f$ and $g$ are related by a linear change of variables. 
\end{lemma}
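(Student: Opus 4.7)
The plan is to use the complete linear system $|\mathcal{O}_{C_f}|$ to reconstruct $\phi$ at the level of global sections, and then lift the resulting linear isomorphism to an automorphism of the ambient $\mathbb{P}^2$. As a first step, I would identify $H^0(C_f,\mathcal{O}_{C_f})$ with the space of degree-one forms on $\mathbb{P}^2$. From the ideal sheaf sequence
\[0\to\mathcal{O}_{\mathbb{P}^2}(-2)\to\mathcal{O}_{\mathbb{P}^2}(1)\to\mathcal{O}_{C_f}\to 0,\]
together with the vanishings $H^i(\mathbb{P}^2,\mathcal{O}(-2))=0$ for $i=0,1$, the restriction map $H^0(\mathbb{P}^2,\mathcal{O}(1))\to H^0(C_f,\mathcal{O}_{C_f})$ is an isomorphism of three-dimensional $k$-vector spaces (the target has dimension three by Riemann--Roch for the genus-one curve $C_f$ and the degree-three line bundle $\mathcal{O}_{C_f}$). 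Thus $U|_{C_f},V|_{C_f},W|_{C_f}$ form a basis of $H^0(C_f,\mathcal{O}_{C_f})$, and the embedding $C_f\hookrightarrow\mathbb{P}^2$ is the linearly normal embedding associated to the complete linear system $|\mathcal{O}_{C_f}|$; the same analysis applies verbatim to $C_g$.

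Second, the hypothesis $\phi^*\mathcal{O}_{C_g}\cong\mathcal{O}_{C_f}$ furnishes a $k$-linear isomorphism $\phi^*\colon H^0(C_g,\mathcal{O}_{C_g})\xrightarrow{\sim} H^0(C_f,\mathcal{O}_{C_f})$, well-defined up to a global scalar. Transported through the identifications of the previous step, this is an element of $\PGL(H^0(\mathbb{P}^2,\mathcal{O}(1)))\cong \PGL_3(k)=\text{Aut}(\mathbb{P}^2_k)$, which I would take as the desired $\tilde\phi\colon\mathbb{P}^2\to\mathbb{P}^2$. Functoriality of the complete linear system forces $\tilde\phi|_{C_f}=\phi$, so $\tilde\phi(C_f)=\phi(C_f)=C_g$ as closed subschemes of $\mathbb{P}^2$. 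Pulling back the defining equation of $C_g$ along $\tilde\phi$ then yields the polynomial identity
\[\tilde\phi^*(W^3-g(U,V))=\lambda(W^3-f(U,V))\]
in $k[U,V,W]$ for some $\lambda\in k^\times$, which is the asserted invertible linear change of variables on $\mathbb{P}^2$ relating $f$ and $g$.

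The step requiring the most care is verifying that the linear-algebraic lift $\tilde\phi$ genuinely restricts to the given morphism $\phi$ on $C_f$, rather than to some isomorphism differing from $\phi$ by a nontrivial automorphism of the curve. This is not a computation but a conceptual point: because $\mathcal{O}_{C_f}$ is very ample and the embedding $C_f\hookrightarrow\mathbb{P}^2$ is induced by the complete linear system, $\phi$ is uniquely determined by its pullback on sections via the universal property of projective space. Once this compatibility is in hand, the remainder of the argument is formal, and the polynomial identity above provides the linear change of variables claimed in the lemma.
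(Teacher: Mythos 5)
Your argument is correct and is essentially the paper's proof: the paper defines $\tilde{\phi}$ precisely by the chain $H^0(\mathbb{P}^2,\mathcal{O}(1))\xrightarrow{\sim}H^0(C_f,\mathcal{O}(1))\xrightarrow{(\phi^*)^{-1}}H^0(C_g,\mathcal{O}(1))\xrightarrow{\sim}H^0(\mathbb{P}^2,\mathcal{O}(1))$, the same composition you construct. Your version simply makes explicit the verifications the paper leaves implicit (the vanishing giving the restriction isomorphisms, the scalar ambiguity and passage to $\PGL_3$, and the universal-property argument that $\tilde{\phi}|_{C_f}=\phi$), and you correctly read $\mathcal{O}_{C_f}$ in the lemma's hypothesis as the embedding line bundle $\mathcal{O}_{C_f}(1)$, which is the paper's intended (if abusive) notation.
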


\begin{proof}
	Define $\tilde{\phi}$ via the composition 
	$$\xymatrix{H^0\left( \mathbb{P}^2, \mathcal{O}(1)\right) 
		\ar[r]^\sim & H^0\left( C_f, \mathcal{O}(1)\right)
		\ar[r]^{(\phi^*)^{-1}}_\sim & H^0\left( C_g, \mathcal{O}(1)\right)  
		\ar[r]^\sim & H^0\left( \mathbb{P}^2, \mathcal{O}(1)\right)}.$$
	This proves the statement. 
\end{proof}

Suppose from now on that $\phi^*\left( \mathcal{O}_{C_g} \right) \neq \mathcal{O}_{C_f}$. We will show in the following that this is not possible and deduce \Cref{prop:iso-SBV} from \Cref{thm:iso-of-Clifford-alg}. Consider the composition 
$$\xymatrix{ C_f \ar[r]^\phi& C_g \ar[r]^{\eta} & C_g \ar[r]^{\phi^{-1}}& C_f },$$
where $\eta: C_g \rightarrow C_g$ is the map defined by $w \mapsto \omega w$. Denote the induced map on the Jacobian $E_f$ by $\mu$ and note that the line bundle $\mathcal{L} = \phi^*\left( \mathcal{O}_{C_g}(1)\right)$. Note that $\mu$ fixes the bundle $\mathcal{L}(-1) = \phi^* \mathcal{O} = \mathcal{O}$ by the commutative diagram below
$$\xymatrix{  
	E_f \ar[r]^\mu & E_f & \mathcal{L}(-1) \\
	\Pic^3 C_f \ar[u] \ar[r]^\sigma & \Pic^3 C_f \ar[u] & \mathcal{L} \ar@{|->}[u]
}.$$
Therefore $\mu$ is a group homomorphism on $E_f$. 

\begin{lemma} 
	$\sigma$ is the map induced by $w \mapsto \omega^i w$ on $C_f$ for $i=1$ or $i=2$. In particular, $\sigma$  is defined over $k$. 
\end{lemma}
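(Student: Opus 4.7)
The plan is to establish the lemma in three stages: first, that $\sigma$ has order $3$ and that the induced map $\mu$ on $E_f$ is a group automorphism; second, to classify $\mu$ using the structure of $\mathrm{Aut}(E_f, 0)$; third, to identify $\sigma$ with the map induced by $w \mapsto \omega^i w$ via a fixed-point comparison.

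For the first stage, since $\eta$ has order $3$ on $C_g$ and $\sigma = \phi^{-1} \circ \eta \circ \phi$ is a conjugate of $\eta$, the induced map $\sigma$ has order $3$ on $\Pic^3(C_f)$. The commutative diagram preceding the lemma shows that $\sigma$ fixes $\mathcal{L}$, so under the vertical map the induced automorphism $\mu$ of $E_f$ fixes the identity and is hence a group automorphism.

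For the second stage, the Jacobian $E_f$ has $j$-invariant $0$, and since $k$ contains $\omega$, the group $\mathrm{Aut}(E_f, 0)$ is cyclic of order $6$, with its order-$3$ elements being exactly the complex multiplications by $\omega$ and $\omega^2$. As $\mu$ is nontrivial of order $3$, $\mu = [\omega^i]$ for some $i \in \{1,2\}$.

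For the third stage, I compare $\sigma$ with the induced action $\eta_f^*$ on $\Pic^3(C_f)$ of the $k$-rational automorphism $\eta_f \colon (u:v:w) \mapsto (u:v:\omega^i w)$ of $C_f$. Both are order-$3$ automorphisms of the $E_f$-torsor $\Pic^3(C_f)$ with linear part $[\omega^i]$: $\sigma$ fixes $\mathcal{L}$, while $\eta_f^*$ fixes $\mathcal{O}_{C_f}(1)$. Using that $\mu = [\omega^i]$ fixes the image of $\mathcal{L}$ in $E_f$ (by the diagram), together with $\mu$ fixing the identity of $E_f$, I conclude that $\mathcal{L} - \mathcal{O}_{C_f}(1)$ lies in the kernel of $1 - [\omega^i]$, which is $E_f[1-\omega]$. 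Consequently $\mathcal{L}$ and $\mathcal{O}_{C_f}(1)$ lie in the same $E_f[1-\omega]$-coset of $\Pic^3(C_f)$, and this coset is exactly the common fixed-point set of both $\sigma$ and $\eta_f^*$. Since two order-$3$ automorphisms of a torsor with the same linear part and the same fixed-point set must coincide, we obtain $\sigma = \eta_f^*$ on $\Pic^3(C_f)$. The automorphism $\eta_f$ is manifestly defined over $k$ since $\omega \in k$, so $\sigma$ inherits this $k$-rationality.

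The main obstacle will be the third stage: namely, extracting from the commutative diagram the constraint that $\mathcal{L} - \mathcal{O}_{C_f}(1) \in E_f[1-\omega]$, and then rigorously matching the fixed-point sets of $\sigma$ and $\eta_f^*$ on the $E_f$-torsor $\Pic^3(C_f)$. The subtlety lies in the fact that $\mathcal{L}$ may differ from $\mathcal{O}_{C_f}(1)$, so one cannot identify the two automorphisms on the nose on $C_f$; the identification takes place only after passing to $\Pic^3(C_f)$, where translations by $3$-torsion act trivially.
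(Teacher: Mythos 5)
Your overall strategy—classify the linear part $\mu$ of $\sigma$ using $j(E_f)=0$, then try to pin down $\sigma$ itself by matching fixed-point data with the $k$-rational automorphism $\eta_f\colon w\mapsto\omega^i w$—is the same route the paper takes, and your argument for $k$-rationality (that $\eta_f$ is manifestly defined over $k$ since $\omega\in k$) is an equivalent, slightly more direct version of the paper's Galois-commutation argument. The problem lies in the third stage.

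Your deduction of $\mathcal{L}-\mathcal{O}_{C_f}(1)\in E_f[1-\omega]$ is circular. Let $\mu$ denote the linear part of the torsor automorphism $\sigma$ on $\Pic^3 C_f$, so that $\sigma(x+p)=\sigma(x)+\mu(p)$ for all $x$ in the torsor and $p\in E_f$. Since $\sigma$ fixes $\mathcal{L}$, one has
\[
\sigma\bigl(\mathcal{O}(1)\bigr)=\sigma\bigl(\mathcal{L}+(\mathcal{O}(1)-\mathcal{L})\bigr)=\mathcal{L}+\mu\bigl(\mathcal{O}(1)-\mathcal{L}\bigr),
\]
so $\sigma$ fixes $\mathcal{O}(1)$ \emph{if and only if} $\mathcal{O}(1)-\mathcal{L}\in\ker(1-\mu)=E_f[1-\omega]$. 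Your two inputs to stage three---``$\mu$ fixes the identity'' and ``$\mu$ fixes the image of $\mathcal{L}$''---cannot both be free. If you take $\mu$ to be the linear part (so $\mu(0)=0$ tautologically), then ``$\mu$ fixes $\mathcal{L}(-1)$'' is precisely the statement $\sigma(\mathcal{O}(1))=\mathcal{O}(1)$, which is what you are trying to prove. If instead $\mu$ is the conjugate of $\sigma$ under the trivialization $\mathcal{M}\mapsto\mathcal{M}\otimes\mathcal{O}(-1)$, then $\mu$ fixes $\mathcal{L}(-1)$ automatically but $\mu(0)=\sigma(\mathcal{O}(1))\otimes\mathcal{O}(-1)$, and asserting $\mu(0)=0$ is again the target claim. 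Either way, the needed torsion relation on $\mathcal{L}-\mathcal{O}(1)$ is assumed, not derived. You do flag the third stage as ``the main obstacle,'' but you describe the difficulty as a bookkeeping issue about torsors versus curves; the real obstacle is that the stated premises already encode the conclusion.

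To be fair, the paper's own proof is terse exactly at this point: it asserts without argument that the three fixed bundles of $\sigma$ on $\Pic^3 C_f$ are $\mathcal{O}(1)$, $\mathcal{L}$, $\mathcal{L}^2(-1)$, which is equivalent to $\mathcal{L}-\mathcal{O}(1)\in E_f[1-\omega]$. What is genuinely needed, and what your write-up would have to supply, is an independent reason why $\sigma^*\mathcal{O}_{C_f}(1)\cong\mathcal{O}_{C_f}(1)$ (equivalently why $(\phi^{-1})^*\mathcal{O}_{C_f}(1)$ is $\eta$-invariant on $C_g$), coming from the specific nature of the isomorphism $\phi$ of Brauer--Severi varieties rather than from general torsor formalism.
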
 

\begin{proof} 
	The map $\sigma$ fixes exactly $3$ line bundles $\mathcal{O}(1), \mathcal{L}, $ and $\mathcal{L}^{2}(-1)$ in $\Pic^3C_f$. Therefore $\sigma$ is an order $3$ automorphism of $E_f$.  
	Since $E_f$ has $j$-invariant $0$, this autormorphism of $E_f$ is unique \cite[III Theorem 10.1]{silverman}. Hence $\sigma$ is the map induced by $w \mapsto \omega^i w$ on $C_f$ for $i=1$ or $i=2$. We may assume for the remainder of this proof, that $i=1$.\\
	
	To show that $\sigma$ is defined over $k$, let $\tau \in \Gal(\kbar/k)$ and consider the commutative diagram 
	$$\xymatrix{ C_f \ar[d]^\eta\ar[r]^\tau & C_f \ar[d]^\eta\\
		C_f \ar[r]^\tau & C_f },$$
	where $\eta$ is the map defined by $w \mapsto \omega w$. The commutativity of the above diagram implies that the action of $\tau$ on $\Pic^3 C_f$ commutes with $\sigma$ as well. Hence $\sigma$ is defined as a morphism of schemes over $k$.
\end{proof} 

By the above, there are actions of $\sigma$ on $H^{0}(C_f, \mathcal{O}(1))$ and $H^{0}(C_f,\mathcal{L})$. The former can be described by the matrix 
$$\begin{pmatrix} 
1 & 0 & 0 \\
0 & 1 & 0 \\
0 & 0 & \omega
\end{pmatrix} \in \PGL_2(k).$$
Suppose that the latter action is given by the action of a matrix $B$ in Jordan-canonical form with respect to a basis $r,s,t$. The eigenspaces of the action defined by $\mathcal{O}(1)$ are as follows: 
\begin{itemize} 
	\item The eigenspace of the eigenvalue $1$ is spanned by $u^3, u^2v, uv^2, v^3, w^3$. As there is a relation between these objects in $C_f$, this eigenspace is of dimension $5$. 
	\item The eigenspace of the eigenvalue $\omega$ is spanned by $u^2w, v^2w, uvw$.  This eigenspace is of dimension $3$. 
	\item The eigenspace of the eigenvalue $\omega^2$ is spanned by 
	$uw^2, vw^2$. This eigenspace is of dimension $2$. 
\end{itemize} 

\begin{lemma}
	$B$ is diagonal and we may assume that its eigenvalues are contained in the set $\{1,\omega,\omega^2\}$.
\end{lemma}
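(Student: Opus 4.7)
The plan is to use that $\sigma$ is an order-three automorphism of $C_f$, which forces the induced action on $H^0(C_f,\mathcal{L})$ to have order dividing three modulo scalars; a standard linear algebra argument (using $\operatorname{char} k \neq 3$ and $\omega \in k$) then gives diagonalisability and the eigenvalue normalisation.

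First I would make the action of $\sigma$ on $H^0(C_f,\mathcal{L})$ explicit. Since $\sigma$ fixes the class of $\mathcal{L}$ in $\Pic^3 C_f$, pick an isomorphism $\psi\colon \sigma^*\mathcal{L}\xrightarrow{\sim}\mathcal{L}$; the action is then $s\mapsto\psi(\sigma^* s)$, and its matrix in the basis $r,s,t$ is $B$. Any other choice of $\psi$ replaces $B$ by $cB$ for some $c\in k^\times$, so $B$ is canonically defined only as an element of $\PGL_3$.

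Next I would show that $B^3$ is a scalar matrix. Using $\sigma^3=\operatorname{id}_{C_f}$, the threefold composition $\psi\circ\sigma^*\psi\circ(\sigma^2)^*\psi$ is an automorphism of $\mathcal{L}$, hence multiplication by some $\lambda\in k^\times$. Therefore $B^3=\lambda I$. Since $\operatorname{char} k\neq 3$ and $\omega\in k$, the polynomial $X^3-\lambda$ is separable, so the minimal polynomial of $B$ divides a separable polynomial and $B$ is diagonalisable over $\overline{k}$. Because $B$ was already placed in Jordan canonical form with respect to $r,s,t$, the absence of nontrivial Jordan blocks forces $B$ itself to be diagonal.

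Finally, I would use the scaling freedom in the choice of $\psi$ to normalise the diagonal entries. Picking any eigenvalue $\mu$ of $B$ and rescaling by $c=\mu^{-1}$ replaces $B$ by $\mu^{-1}B$, whose diagonal entries all satisfy $X^3=\mu^{-3}\lambda=1$ and therefore lie in $\{1,\omega,\omega^2\}$. The main technical step in the whole argument is the computation $B^3=\lambda I$: it requires carefully tracking how the chosen trivialisation $\psi\colon\sigma^*\mathcal{L}\to\mathcal{L}$ interacts with further pullback under $\sigma$, which is where the hypothesis $\sigma^3=\operatorname{id}$ is genuinely used. Everything else reduces to standard facts about order-three elements of $\PGL_3$ in characteristic prime to three.
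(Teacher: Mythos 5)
Your proof is correct but takes a genuinely different route from the paper. The paper's proof simply asserts ``we may assume $B^3 = \mathbf{1}$'' and then runs a case analysis over the possible non-diagonal Jordan block structures, computing $B^3$ in each case and observing that the off-diagonal entries $3\lambda^2$, $3\lambda$ cannot vanish since $\operatorname{char}k\neq 3$ — a contradiction. You instead first make explicit why $B$ is intrinsically only defined in $\PGL_3$: the trivialisation $\psi\colon\sigma^*\mathcal{L}\to\mathcal{L}$ is unique up to $k^\times$. You then observe that $\sigma^3=\operatorname{id}$ forces the threefold composite of $\psi$ with its pullbacks to be an automorphism of $\mathcal{L}$, i.e.\ a scalar, so $B^3=\lambda I$; and separability of $X^3-\lambda$ (again using $\operatorname{char}k\neq 3$, with $\omega\in k$) gives that the minimal polynomial of $B$ has distinct roots, hence $B$ is diagonalisable and therefore already diagonal in Jordan form. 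This is cleaner: it avoids the explicit enumeration of Jordan forms and it actually justifies the paper's abrupt normalisation step, since one must first argue that $B^3$ is scalar before one can rescale $\psi$ to make $B^3=I$. Both arguments buy the same conclusion; yours trades three matrix computations for one appeal to separability, and has the additional virtue of isolating exactly where the projective ambiguity and the hypothesis $\sigma^3=\operatorname{id}$ enter. The one caveat — equally present in the paper's proof — is that placing $B$ in Jordan canonical form over $k$ already presupposes its eigenvalues lie in $k$, so strictly speaking the diagonalisability conclusion should be read over $\overline{k}$ and then descended; this is harmless here because $\omega\in k$ means that once one eigenvalue of $B$ lies in $k$ they all do.
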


\begin{proof} 
	Note that the action of $\sigma$ has order $3$. Therefore we may assume that $B^3 = \mathbf{1}$. Suppose that $B$ is not diagonal. If $B$ has an eigenvalue $\lambda\neq 0$ of multiplicity $3$ and the corresponding eigenspace has dimension $1$, then 
	$B = \begin{pmatrix}
	\lambda & 1 & 0 \\
	0 & \lambda & 1 \\
	0 & 0 & \lambda
	\end{pmatrix} 
	$
	and $B^3 = \begin{pmatrix} \lambda^3 & 3\lambda^2 & 3\lambda \\ 0 & \lambda^3 & 3\lambda^2 \\0&0&\lambda^3\end{pmatrix}$, which is not diagonal. If $B$ has an eigenvalue $\lambda_1 \neq 0$ of multiplicity $2$ (or $3$) and the corresponding eigenspace has dimension $1$ (or $2$, respectively). Denote the remaining eigenvalue by $\lambda_2$. 
	Then $B = \begin{pmatrix} 
	\lambda_1 & 1 & 0 \\
	0 & \lambda_1 & 0 \\
	0 & 0 & \lambda_2\end{pmatrix}.$
	In this case $B^3 = \begin{pmatrix} \lambda_1^3 & 3 \lambda_1^2 & 0 \\
	0 & \lambda_1^3 & 0 \\
	0 & 0 & \lambda_2^3\end{pmatrix}. $ We conclude that $B$ is diagonal. The second part of the statement follows as $B^3 = \mathbf{1}$. 
\end{proof} 

In the following, using contradiction, we will show that $B$ has to be the identity matrix. Suppose first that $B$ has three distinct eigenvalues, i.e. $B = \begin{pmatrix} 1 & 0 & 0 \\
0 & \omega & 0 \\ 0 & 0 & \omega^2  \end{pmatrix} $ The eigenspaces of this action on $H^0(C_f, \mathcal{O}(3))$ are as follows: 
\begin{itemize} 
	\item The eigenspace of eigenvalue $1$ is spanned by $r^3, s^3, t^3, rst$. 
	\item The eigenspace of eigenvalue $\omega$ is spanned by $r^2 s, r t^2, s^2 t$. 
	\item The eigenspace of eigenvalue $\omega^2$ is spanned by $rt^2, r^2 t, st^2$.
\end{itemize} 
Comparing this with the previous list, we conclude that there is a linear relation $a rt^2 + b r^2t+ c st^2=0$ with $a,b,c \neq 0$, which is a contradiction.\\

Assume now that $B$ has two distinct eigenvalues, in other words $B$ may be assumed to be of the form $\begin{pmatrix} 1 & 0 & 0 \\ 0 & 1 & 0 \\ 0 & 0 & \omega^i\end{pmatrix}$ for $i =1$ or $i=2$. If $i=1$, then the embedding given by $\mathcal{L}$ is the same, as the one given by $\mathcal{O}(1)$, which is a contradiction as we assumed that $\mathcal{O}(1) \neq \mathcal{L}$. If $i=2$, then the eigenspaces of the action on $H^0(C_f, \mathcal{O}(3) )$ are as follows: 
\begin{itemize} 
	\item The eigenspace of eigenvalue $1$ is spanned by $r^3, r^2s, rs^2, s^3, t^3$.  
	\item The eigenspace of eigenvalue $\omega^{1}$ is spanned by 
	$rt^2, st^2$. 
	\item The eigenspace of eigenvalue $\omega^{2}$ is spanned by $r^2t, rst, s^2t$. 
\end{itemize} 
Comparing this with the previous list, we get a contradiction. We conclude that the assumption made at the beginning of this part was incorrect, i.e. $\phi^*\left( \mathcal{O}_{C_g} \right) = \mathcal{O}_{C_f}$ and hence by \Cref{thm:iso-of-Clifford-alg} we deduce the statement of \Cref{prop:iso-SBV}. \\

The above argument additionally implies the following proposition. 

\begin{prop}\label{prop:X-with-auto}
	Let $X$ be a genus one curve in $\mathbb{P}^2$. If $X$ is equipped with an order $3$ automorphism $\sigma$ over $k$, then $X$ is of the form $w^3-f(u,v)$ for some nondegnerate binary cubic form $f$. 
\end{prop}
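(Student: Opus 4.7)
The plan is to apply the same eigenspace argument just completed for \Cref{prop:iso-SBV}, using $\sigma$ on $X$ in place of the composition $\phi^{-1}\circ\eta\circ\phi$ on $C_f$. Set $\mathcal{L}=\sigma^*\mathcal{O}_X(1)\in\Pic^3(X)$. The first goal is to establish $\mathcal{L}\cong\mathcal{O}_X(1)$; once this is in hand, $\sigma$ acts on the $3$-dimensional space $H^0(X,\mathcal{O}_X(1))$ and so lifts to a projective automorphism of $\mathbb{P}^2$ preserving $X$, which we can diagonalize to read off the required equation.

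To show $\mathcal{L}\cong\mathcal{O}_X(1)$ I would argue by contradiction. Assuming $\mathcal{L}\neq\mathcal{O}_X(1)$, choose a basis $r,s,t$ of $H^0(X,\mathcal{L})$ diagonalizing the $\sigma$-action; the resulting order-$3$ matrix $B$ is forced to be diagonal with eigenvalues in $\{1,\omega,\omega^2\}$ by the lemma already proved above. Comparing the $\sigma$-eigenspace decomposition of cubic monomials in $r,s,t$ (viewed as sections of $\mathcal{L}^{\otimes 3}$) with that of cubic monomials in $u,v,w$ (sections of $\mathcal{O}_X(3)$)---exactly the comparison carried out on the preceding pages---each non-identity configuration of $B$ produces a forced linear relation among monomials incompatible with the smoothness and irreducibility of $X$, contradicting $\mathcal{L}\neq\mathcal{O}_X(1)$. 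Hence $B$ is the identity and $\mathcal{L}\cong\mathcal{O}_X(1)$.

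With $\sigma\in\PGL_3(k)$ preserving $X$, I would diagonalize using $\omega\in k$ and write $\sigma$ in coordinates as $\mathrm{diag}(\omega^a,\omega^b,\omega^c)$ with $(a,b,c)$ not all equal. Decomposing $H^0(\mathbb{P}^2,\mathcal{O}(3))$ into $\sigma$-eigenspaces monomial-by-monomial, the defining cubic $F$ of $X$ must lie in one of them; the same smoothness/irreducibility bookkeeping used above singles out (after relabelling) the configuration $\sigma=\mathrm{diag}(1,1,\omega)$ with $F$ in the $\sigma$-invariant eigenspace. In that eigenspace $F=g(u,v)+cw^3$, with $c\neq 0$ and $g$ a binary cubic (both forced by smoothness: otherwise $w$ divides $F$ or the projection to $\{w=0\}$ acquires a singularity). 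Rescaling gives $X=\{w^3=f(u,v)\}$, and the nondegeneracy of $f$ follows because a repeated root of $f$ would produce a singular point of $X$ on the line $\{w=0\}$.

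The main technical obstacle is the first step: faithfully transporting the eigenspace bookkeeping of the preceding proof to the present setting, where the two degree-$3$ line bundles $\mathcal{O}_X(1)$ and $\sigma^*\mathcal{O}_X(1)$ live on the single curve $X$ rather than on a pair of curves linked by an isomorphism. All the linear-algebra content survives verbatim, however, because that argument depends only on having two degree-$3$ line bundles on a common curve together with a compatible order-$3$ action, with no further input from the geometry of two curves.
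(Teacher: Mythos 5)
Your approach is genuinely different from the paper's: you try to argue directly from the eigenspace bookkeeping of the preceding pages, whereas the paper passes through the Jacobian $E$, uses that the induced automorphism $\mu$ on $E$ has order $3$ (forcing $j(E)=0$ and $\mu$ to be complex multiplication), and then reads off the action of $\sigma$ on sections. Unfortunately your route has two gaps that the Jacobian route sidesteps.

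First, the contradiction argument in your Step~1 cannot be set up. Under your hypothesis $\mathcal{L}=\sigma^*\mathcal{O}_X(1)\neq\mathcal{O}_X(1)$, pullback sends $H^0(X,\mathcal{L})$ to $H^0(X,\sigma^*\mathcal{L})=H^0\bigl(X,(\sigma^*)^2\mathcal{O}_X(1)\bigr)$, and if $\sigma^*$ is nontrivial on $\Pic^3(X)$ then $(\sigma^*)^2\mathcal{O}_X(1)\neq\sigma^*\mathcal{O}_X(1)=\mathcal{L}$. So $\sigma$ does \emph{not} act on $H^0(X,\mathcal{L})$, nor on $H^0(X,\mathcal{O}_X(1))$, and there is no matrix $B$ to diagonalize and no ``$\sigma$-eigenspace decomposition of cubic monomials in $u,v,w$'' to compare against. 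In the proof of \Cref{prop:iso-SBV} this step worked because $\sigma=\phi^{-1}\eta\phi$ was \emph{constructed} so that both $\mathcal{O}_{C_f}(1)$ and $\mathcal{L}=\phi^*\mathcal{O}_{C_g}(1)$ are fixed (since $\eta^*\mathcal{O}_{C_g}(1)=\mathcal{O}_{C_g}(1)$), and because the explicit form $C_f=\{w^3=f(u,v)\}$ pinned down the action on $H^0(\mathcal{O}(1))$ as $\mathrm{diag}(1,1,\omega)$; neither of these inputs is available here. The ``compatible order-$3$ action'' you invoke at the end is precisely the thing that fails when $\mathcal{L}\neq\mathcal{O}_X(1)$, so the linear-algebra content does not ``survive verbatim.''

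Second, even granting a linear lift $\sigma=\mathrm{diag}(\omega^a,\omega^b,\omega^c)\in\PGL_3(k)$, the claim that smoothness and irreducibility single out $\mathrm{diag}(1,1,\omega)$ is false. Take $\sigma=\mathrm{diag}(1,\omega,\omega^2)$: its invariant eigenspace in degree $3$ is spanned by $u^3,v^3,w^3,uvw$, and the Hesse cubics $F=u^3+v^3+w^3+\lambda uvw$ are smooth and irreducible for all but finitely many $\lambda$, yet for generic $\lambda$ the Jacobian has $j\neq 0$ while every curve $w^3=f(u,v)$ has a $j=0$ Jacobian. In this configuration $\sigma$ acts without fixed points on $X$, i.e.\ as a translation by a $3$-torsion point, and the induced automorphism on $\Pic^0(X)$ is the identity. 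The paper's argument excludes exactly this case by insisting $\mu$ has order $3$ on $E$ (as it does for the $\sigma$ coming from complex multiplication in \Cref{lemma:kappa-alpha-is-binary-cubic}); your argument has no mechanism to rule out the translation case. To repair the proof you would need to import, in some form, the hypothesis that $\sigma$ acts nontrivially on the Jacobian, which is what the paper's detour through $E$ accomplishes.
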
 

\begin{proof} 
	Let $E$ be the Jacobian of $X$. The automorphism $\phi$ induces an order three automorphism $\mu$ on $E$. In particular, as $E$ has a unique order three automorphism, we may assume that $\mu$ is given by $x \mapsto \omega x, y \mapsto y$ on the elliptic curve $E$. Furthermore, $\mu$ fixes three line bundles. Denote the non-trivial line bundle that is fixed by $\mathcal{L}$. \\
	Now $\phi$ defines an action on $H^0\left(X,\mathcal{O}(1)\right)$ and on $H^0\left(X, \mathcal{L}\right)$. As observed above, we may assume that $\phi$ acts on both by a matrix of the form 
	$\begin{pmatrix} 1&0 &0\\0&1&0\\0&0&\omega\end{pmatrix}.$
	Hence, there is a commutative diagram of the form 
	$$ \xymatrix{ X \ar[rd]\ar[r]^\phi & X \ar[d]\\
	&\mathbb{P}^1}.$$
Hence we may write $X$ as $w^3 - f(u,v)$ for a nondegenerate binary cubic form $f$. 
\end{proof}

\subsection{Action on the relative cubic curve $\mathcal{C}$ over $U$}

Recall that $\GL_2$ acts on $U$ by change of coordinates, for details see \Cref{sec:Action-on-U}. We first extend the action on $U$ to the curve $\mathcal{C}$ given by 
$$\mathcal{C}: w^3 - \left( x^3 u^3 + \alpha u^2v + \beta uv^2 + y^3 v^3 \right) \subseteq \mathbf{P}^2_U.$$ Consider the action of a matrix $\mathfrak{g} = \begin{pmatrix} a & b \\c& d \end{pmatrix}$ on $u,v$ given by a linear change of coordinates $\mathfrak{g}.u = au + bv, \mathfrak{g}.v = cu + dv$ and $\mathfrak{g}.w = w$. By a direct computation, we see that 
\begin{align*} 
	\mathfrak{g}.u^3 &= a^3 u^3 + 3a^2b u^2v + 3 ab^2uv^2 + b^3 v^3,\\
	\mathfrak{g}.u^2v &= a^2 c u^3 + a(ad+2bc) u^2v + b(bc+2ad) uv^2 + b^2d v^3 ,\\
	\mathfrak{g}.uv^2 &= ac^2 u^3 + c(bc+2ad) u^2v + d(ad+2bc) uv^2 + bd^2  v^3,\\
	\mathfrak{g}.v^3 &= c^3 u^3+ 3c^2d u^2v + 3cd^2 uv^2 + d^3 v^3.
\end{align*} 
Comparing this with the matrix in \cref{matrixaction}, we see that the following action extends the action of $\GL_2(k)$ on $U$ to $\mathbb{P}^2_U$. 
\begin{defn} 
	Let $\mathfrak{g} \in \GL_2(k)$ and $P=\left[ u:v:w\right] \in \mathbb{P}^2_U$. Define $$\mathfrak{g}*P = \left[ \mathfrak{g}^{-1}.u : \mathfrak{g}^{-1}.v: w \right].$$ 
\end{defn} 

\begin{lemma} 
	The action of $\GL_2(k)$ on $\mathbb{P}^2$ leaves the ideal $w^3 - x^3 u^3 - \alpha u^2 v - \beta uv^2 - y^3 v^3$ invariant. Therefore, there is a well-defined action of $\GL_2(k)$ on $\mathcal{C}$. 
\end{lemma}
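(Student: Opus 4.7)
My plan is to rewrite $F := w^3 - x^3 u^3 - \alpha u^2v - \beta uv^2 - y^3 v^3$ as $F = w^3 - f_{\mathrm{univ}}(u,v)$, where $f_{\mathrm{univ}}(u,v) := x^3 u^3 + \alpha u^2 v + \beta uv^2 + y^3 v^3$ is the universal binary cubic form over $R$, and to prove the stronger statement that $F$ itself is fixed by the pull-back of the scheme action $\mathfrak{g}*$; this of course implies invariance of the principal ideal $(F)$. The whole argument reduces to a single polynomial identity in $R[u,v]$, namely
$$
f_{\mathrm{univ}}(\mathfrak{g}.u,\mathfrak{g}.v) \;=\; (\mathfrak{g}.x^3)\,u^3 + (\mathfrak{g}.\alpha)\,u^2v + (\mathfrak{g}.\beta)\,uv^2 + (\mathfrak{g}.y^3)\,v^3,
$$
which is not a separate computation but precisely what the matrix in \cref{matrixaction} records: expanding $f_{\mathrm{univ}}(au+bv,cu+dv)$ and collecting the coefficients of $u^3, u^2v, uv^2, v^3$ reproduces the four columns of that matrix, i.e.\ $\mathfrak{g}.x^3,\mathfrak{g}.\alpha,\mathfrak{g}.\beta,\mathfrak{g}.y^3$. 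In other words, the $\GL_2$-action on $R$ has been engineered so as to encode the change-of-variables action on the coefficients of a binary cubic form, and this is exactly the compatibility the preceding paragraphs of the section spell out.

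With this identity in hand, the verification is essentially a single substitution. The scheme action $\mathfrak{g}*[u:v:w] = [\mathfrak{g}^{-1}.u:\mathfrak{g}^{-1}.v:w]$ has pull-back that sends $u\mapsto \mathfrak{g}^{-1}.u$, $v\mapsto \mathfrak{g}^{-1}.v$, $w\mapsto w$ on the fiber coordinates while acting on the base coordinates $x^3,\alpha,\beta,y^3$ via \cref{matrixaction}. Applied to $F$ this yields
$$
(\mathfrak{g}*)^*F \;=\; w^3 - \bigl[(\mathfrak{g}.x^3)(\mathfrak{g}^{-1}.u)^3 + (\mathfrak{g}.\alpha)(\mathfrak{g}^{-1}.u)^2(\mathfrak{g}^{-1}.v) + (\mathfrak{g}.\beta)(\mathfrak{g}^{-1}.u)(\mathfrak{g}^{-1}.v)^2 + (\mathfrak{g}.y^3)(\mathfrak{g}^{-1}.v)^3\bigr].
$$
Invoking the displayed identity with $(u,v)$ replaced by $(\mathfrak{g}^{-1}.u,\mathfrak{g}^{-1}.v)$, the bracketed expression collapses to $f_{\mathrm{univ}}\bigl(\mathfrak{g}.(\mathfrak{g}^{-1}.u),\mathfrak{g}.(\mathfrak{g}^{-1}.v)\bigr) = f_{\mathrm{univ}}(u,v)$, because $\mathfrak{g}$ and $\mathfrak{g}^{-1}$ compose to the identity on the span of $u,v$. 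Hence $(\mathfrak{g}*)^*F = F$, the principal ideal $(F)$ is preserved, and the $\GL_2$-action descends to a well-defined action on $\mathcal{C}$.

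The only genuine hazard in carrying out this plan is keeping left/right conventions straight: the formula $\mathfrak{g}.u=au+bv$ reads the first row of $\mathfrak{g}$ rather than the first column (contrast with $\mathfrak{g}.x=ax+cy$), and so it defines a right action on the span of $\{u,v\}$. This is precisely why the paper's definition of $\mathfrak{g}*P$ contains $\mathfrak{g}^{-1}$, namely to convert this into a bona fide left action on points. Once the pull-back is recorded correctly---$\mathfrak{g}^{-1}$ on the fiber variables and $\mathfrak{g}$ via \cref{matrixaction} on the base variables---the two sides of the identity cancel and there is nothing further to check.
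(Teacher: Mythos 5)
Your proof is correct and uses the same approach as the paper, which simply asserts that the actions on $u^3, u^2v, uv^2, v^3$ and on $x^3, \alpha, \beta, y^3$ cancel. You organize that "direct computation" around the single transposition identity $f_{\mathrm{univ}}(\mathfrak{g}.u,\mathfrak{g}.v) = (\mathfrak{g}.x^3)u^3 + (\mathfrak{g}.\alpha)u^2v + (\mathfrak{g}.\beta)uv^2 + (\mathfrak{g}.y^3)v^3$, which matrix \cref{matrixaction} encodes, and then substitute $\mathfrak{g}^{-1}.u, \mathfrak{g}^{-1}.v$ to see the cancellation; this is a faithful and more careful rendering of what the paper's one-line proof leaves implicit, including the left/right-action bookkeeping that explains the $\mathfrak{g}^{-1}$ in the definition of $\mathfrak{g}*$.
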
 

\begin{proof} 
	This can be proven using a direct computation. The action on $u^3, u^2v, uv^2,$ and $v^3$ cancels the action on $x^3, \alpha, \beta,$ and $y^3$, leaving the equation $w^3 - x^3 u^3 + \alpha u^2v + \beta uv^2 + y^2 v^3$ invariant. 
\end{proof} 

\subsection{Action on the relative elliptic curve $J$ over $U$} 

The above gives us an action of $\GL_2(k)$ on the elliptic curve $J$, that extends the action on $U$. Recall that $J$ is given by the equation $s^2 = \gamma^3 + \frac14 \Delta$. Direct computation shows that the action on $J$ is given by 
$$
\mathfrak{g}.\gamma =  \left( \det \mathfrak{g} \right)^2 \gamma, 
\qquad 
\mathfrak{g}.s =  \left( \det \mathfrak{g} \right)^3 s, \text{and} 
\qquad 
\mathfrak{g}.\Delta =  \left( \det \mathfrak{g} \right)^6 \Delta.  
$$

Suppose that $\mathfrak{g}$ fixes a diagonal form $f$ in $U$ given by $pu^3 + rv^3$. We computed the stabilizer of $f$ in \Cref{sec:IsoClifford}.  Let $\mathfrak{g} = \begin{pmatrix} 
	\omega^i & 0 \\
	0 & \omega^j \end{pmatrix}$ be an element in the stabilizer of $f$. If $i+j \equiv 0 (\mod 3)$, then $\mathfrak{g}$ acts on $J_f$ by the identity. If $i+j \not\equiv 0 (\mod 3)$, then $\mathfrak{g}$ fixes $s$ and takes $\gamma$ to $\omega^{2i+2j} \gamma$. This defines an automorphism of $J_f$ with three fixed points $0, (0,s_1), (0,-s_1)$, where $(0,s_1)$ is a three torsion point on $J_f$. 
Finally, suppose that $\mathfrak{g} = \begin{pmatrix} 0 & \lambda \\
	\frac1\lambda & 0 \end{pmatrix}$ is an element in the stabilizer of $f$. Recall that in this case $f$ has a zero over $k$. In this case, the automorphism induced on $J_f$ by $\mathfrak{g}$ is the isogeny $[-1]$ that takes a point $(s,\gamma)$ to $(-s,\gamma)$.\\

In conclusion, every $f \in U$ has finite stabilizer and every element $\mathfrak{g}$ in the stabilizer of $f$ has finite fixed points in $J_f$. This implies the following proposition. 
\begin{prop}
	The stack $\left[J/\GL_2\right]$ is a Deligne-Mumford stack and each orbit in $\left[J/\GL_2\right](k)$ defines a relative elliptic curve over an orbit in $\left[ U/\GL_2\right](k)$. 
\end{prop}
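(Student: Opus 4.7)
The plan is to verify the two assertions in turn, leveraging the explicit computations of the preceding subsection. Since the $\GL_2$-action on $J$ was constructed to extend the action on $U$ compatibly with the projection $\pi: J \to U$, this projection is $\GL_2$-equivariant, and hence descends to a morphism of quotient stacks $[\pi]: [J/\GL_2] \to [U/\GL_2]$. I would begin by making this equivariance explicit, so that the proposition can be read as a statement about $[\pi]$.

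Next, to establish that $[J/\GL_2]$ is Deligne-Mumford, I would invoke the standard criterion reducing the problem to showing that all geometric stabilizers are finite and unramified. For any $P \in J(\kbar)$ with image $f = \pi(P)$, equivariance of $\pi$ gives $\Stab_{\GL_2}(P) \subseteq \Stab_{\GL_2}(f)$, and \Cref{sec:Action-on-U} already shows that $\Stab_{\GL_2}(f)$ has order at most $18$. Since $\mathrm{char}(k) \neq 2,3$, such a finite group scheme is automatically étale, yielding the Deligne-Mumford property; this mirrors the proof of \Cref{prop:Deligne}.

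For the second assertion, I would combine the relative elliptic curve structure on $\pi: J \to U$ (established in \Cref{algebraic} and \Cref{sect:geometric} via the Weierstrass model $s^2 = \gamma^3 + \tfrac{1}{4}\Delta$) with the equivariance of $\pi$. The explicit formulas $\mathfrak{g}.\gamma = (\det \mathfrak{g})^2 \gamma$ and $\mathfrak{g}.s = (\det \mathfrak{g})^3 s$ show that each $\mathfrak{g} \in \GL_2$ preserves the Weierstrass form up to an admissible scaling and, in particular, fixes the point at infinity fiberwise. Hence $\mathfrak{g}$ restricts to an isomorphism $J_f \xrightarrow{\sim} J_{\mathfrak{g}.f}$ of pointed curves, and so acts by isomorphisms of the relative group scheme. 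Equivariant descent then endows $[\pi]: [J/\GL_2] \to [U/\GL_2]$ with the structure of a relative elliptic curve, so that each orbit in $[U/\GL_2](k)$ supports an elliptic curve given by any lift in $[J/\GL_2](k)$.

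The main subtle point I anticipate is confirming that $\GL_2$ genuinely acts by group scheme automorphisms rather than by mere scheme automorphisms; the preservation of the Weierstrass form together with the standard rigidity fact (morphisms of elliptic curves fixing the origin are group homomorphisms) handles this cleanly, but must be invoked carefully to justify the descent of the group law.
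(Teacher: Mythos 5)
Your proposal is correct and follows essentially the same path as the paper's (implicit) proof: equivariance of $\pi \colon J \to U$, finiteness of stabilizers via $\stab_{\GL_2}(P) \subseteq \stab_{\GL_2}(\pi(P))$ together with the order-$\le 18$ bound from \Cref{sec:Action-on-U}, and the Weierstrass/origin-preservation calculations to descend the group structure. The paper additionally catalogs the explicit automorphisms that stabilizer elements induce on each fiber $J_f$ (identity, the order-$3$ CM automorphism, or $[-1]$) and notes they have finitely many fixed points, but as you observe this extra computation is not needed for the Deligne--Mumford property; your containment of stabilizers plus the rigidity argument for the group law is a slightly cleaner way to phrase the same content.
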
 

\subsection{Proof of \Cref{thm:Moduli-space}} 

We are now ready to proof \Cref{thm:Moduli-space}. Recall the construction of our bijection from the beginning of this section. We need to show that it is well-defined. 

\begin{lemma}\label{lemma:kappa-alpha-f-is-C-f}
	Let $f$ be a binary cubic nondegenerate form. Denote by $C_f$ the curve defined by $w^3- f(u,v)$, $E_f$ the Jacobian of $C_f$, and $\alpha_f$ the Brauer class associated to the Clifford algebra of $f$. Then the map $\kappa$ of \cref{eq:SES} takes $\alpha_f$ to the class of the principal homogeneous space $C_f$. 
\end{lemma}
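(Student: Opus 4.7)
The plan is to apply the standard description of the coboundary $\kappa$ in (\ref{eq:SES}) to the Azumaya algebra $\mathcal{A}$ constructed in \Cref{sect:geometric}, specialized at $f$. The coboundary admits the following concrete recipe: given $[\mathcal{B}] \in \Br(E_f)$, base change to $\kbar$ forces the class to become trivial (since $\Br(E_{f,\kbar}) = 0$ for an elliptic curve over an algebraically closed field), so $\mathcal{B} \otimes_k \kbar \cong \End(V)$ for some locally free sheaf $V$ on $E_{f,\kbar}$, unique up to twist by a line bundle. For $\sigma \in \Gal(\kbar/k)$, uniqueness forces $\sigma^* V \cong V \otimes M_\sigma$ for a unique $M_\sigma \in \Pic(E_{f,\kbar})$, and the image of the cocycle $\sigma \mapsto M_\sigma$ in $H^1(k, \Pic^0(E_{f,\kbar})) \cong H^1(k, E_f(\kbar))$ (via the splitting provided by the degree map together with the identity section of $E_f$) represents $\kappa([\mathcal{B}])$.

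The next step is to specialize the construction of $\mathcal{A}$ at $f$. By \Cref{prop:DefineP} and \Cref{prop:morphism after descent}, $\mathcal{A}|_f$ trivializes over $\kbar$ as $\End(V)$ with $V = \pi_* \mathcal{P}$, where $\mathcal{P}$ is the \Poincare bundle on $C_{f,\kbar} \times E_{f,\kbar}$ (using the canonical $k$-rational identification $\Pic^3(C_f) \cong E_f$ via translation by $[\mathcal{O}(1)]$ coming from the embedding $C_f \hookrightarrow \mathbb{P}^2$), rigidified along $\{P_0\} \times E_{f,\kbar}$ for a chosen $P_0 \in C_f(\kbar)$. For $\sigma \in \Gal(\kbar/k)$, the Galois conjugate $\sigma^* \mathcal{P}$ is again a \Poincare bundle, now rigidified along $\{\sigma(P_0)\} \times E_{f,\kbar}$, and the universal property produces $\sigma^* \mathcal{P} \cong \mathcal{P} \otimes p_2^* M_\sigma$ for a unique $M_\sigma \in \Pic(E_{f,\kbar})$. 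Consequently $\sigma^* V \cong V \otimes M_\sigma$.

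The crux will be to identify $M_\sigma$ explicitly. A seesaw computation comparing the two rigidified bundles along the slices $\{P_0\} \times E_{f,\kbar}$ and $C_{f,\kbar} \times \{[\mathcal{O}(1)]\}$ should show that $M_\sigma$ has degree zero and, under the canonical Abel--Jacobi isomorphism $E_{f,\kbar} \xrightarrow{\sim} \Pic^0(E_{f,\kbar})$, corresponds to the element $\sigma(P_0) - P_0 \in E_f(\kbar)$, where the difference is formed via the natural torsor action of $E_f$ on $C_f$. Since $\sigma \mapsto \sigma(P_0) - P_0$ is by definition the Galois cocycle representing $[C_f] \in H^1(k, E_f(\kbar))$, the desired equality $\kappa(\alpha_f) = [C_f]$ will follow.

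The hard part will be the seesaw step identifying $M_\sigma$ with $\sigma(P_0) - P_0$: the change of rigidification from $P_0$ to $\sigma(P_0)$ must be translated carefully into an explicit line bundle on $E_{f,\kbar}$, and then matched with a point of $E_f$ via Abel--Jacobi. Degree normalizations, the splitting $\Pic(E_{f,\kbar}) = \mathbb{Z} \oplus \Pic^0(E_{f,\kbar})$, and sign conventions all require care; once the degree-zero statement is in hand, the identification with $\sigma(P_0) - P_0$ follows from functoriality of the \Poincare bundle under change of rigidification.
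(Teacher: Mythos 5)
Your proposal is correct in overall strategy but takes a genuinely different route from the paper. The paper's proof works on the $H^2$ side: it identifies $\Br(E_f)$ with $H^2\left(k, \kbar(\overline{E}_f)^\times\right)$ via Tsen's theorem, pushes $\alpha_f$ forward through the sequences $0 \to \kbar(\overline{E})^\times \to \Prin(\overline{E}) \to \Div(\overline{E}) \to 0$ and $0 \to \Prin(\overline{E}) \to \Div^0(\overline{E}) \to E(\kbar) \to 0$ following Lichtenbaum, and then writes down an explicit $2$-cocycle for $\alpha_f$ as the divisor of a function built from translates of the $\Theta$-divisor (equation \eqref{eqn:theta}); the identification with the image of the torsor cocycle $\sigma \mapsto P^{\sigma^{-1}} - P$ of $C_f$ is read off by matching this divisor with the standard coboundary formula. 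Your proof instead works on the $H^1$ side: you trivialize $\alpha_f \otimes \kbar$ as $\End(V)$ with $V = \pi_* \mathcal{P}$ the push-forward of the rigidified Poincar\'e bundle, track the Galois descent data as a $1$-cocycle $\sigma \mapsto M_\sigma$ in $\Pic^0(\overline{E}_f)$, and then match $M_\sigma$ with $\sigma(P_0) - P_0$ by a seesaw computation comparing rigidifications at $P_0$ and $\sigma(P_0)$. These are complementary incarnations of the same coboundary; what the paper's approach buys is a completely explicit $2$-cocycle (the theta-translate formula) amenable to direct inspection, while yours is more conceptual and makes the change-of-rigidification the visible mechanism producing the torsor class.

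One remark: the crux you flag -- the seesaw identification of $M_\sigma$ with $\sigma(P_0) - P_0$ under Abel--Jacobi -- is indeed where all the content lives, and you would still have to discharge it (the paper sidesteps this by working directly with divisors of functions rather than line bundles, where the corresponding computation is absorbed into the theta-translate bookkeeping). The care you request around degree normalizations and the $k$-rational identification $\Pic^3(C_f) \cong E_f$ via $[\mathcal{O}(1)]$ is warranted; that identification is necessary to make the Galois descent data land in $\Pic^0(\overline{E}_f)$ and is used tacitly in the paper as well.
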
 

\begin{proof} 
	This was shown in \cite[p. 518]{Haile:WhenistheCliffordAlgebraofabinarycubicformsplit}
	even though it was stated in different terms. Furthermore, it is a a special case of the computations done in \cite[Proposition 3.2]{Kulkarni:the-extension-of-the-reduced-clifford-algebra-and-its-brauer-class}. We will review the basic argument here for completion. 
	Fix a point $P \in C_f(\kbar)$. Then a cocycle representation corresponding to $C_f$ is given by $f(\sigma) = P^{\sigma^{-1}} - P$. We will show that this is the image of $\alpha_f$. Let us first review the definition of $\kappa$ from sequence \cref{eq:SES}. This construction can be found in more detail for example in \cite{Lichtenbaum}. Denote $\overline{E} = E \times_k \kbar$. By Tsen's theorem, the group $\Br(E) $ is isomorphic to $H^2\left( k, k(\overline{E})^\times \right)$ and therefore we may think of $\alpha$ as a cocycle in this cohomology group. Consider the exact sequence 
	$$\xymatrix{ 0 \ar[r]& k(\overline{E})^\times \ar[r]& \Prin(\Ebar) \ar[r] & \Div(\Ebar) \ar[r] &0  },$$ 
	where $\Div(\overline{E})$ is the group of divisors and $\Prin(\overline{E})$ the set of principal divisors on $\overline{E}$. This induces a map 
	$$\xymatrix{ H^2 \left(k, k(\overline{E})^\times \right)  \ar[r] & H^2\left( k, \Prin(\Ebar) \right)}.$$
	Denote the image of $\alpha_f$ under this map by $h_f$. Now consider the sequence 
	$$\xymatrix{0 \ar[r] & \Prin(\Ebar) \ar[r] & \Div^0(\Ebar) \ar[r] & E(\kbar) \ar[r] & 0 }.$$
	This induces a map 
	\begin{equation}\label{eq:H1toH2}\xymatrix{H^1\left( k, E(\kbar)\right) \ar[r] & 
		H^2\left( k, \Prin(\Ebar) \right)}.\end{equation} 
	It can be shown that there is a unique lift of $\alpha_f$, which is $\kappa(\alpha_f)$. 
	Recall that $\alpha_f$ is defined via the \Poincare bundle and the $\Theta$-divisor $\Theta_0$. In $H^2\left( k, \kbar(E)^\times\right)$, the Azumaya algebra $\alpha_f$ can be represented by the cocycle that takes $\sigma, \tau \in \Gal(k)$ to a function whose divisor is 
	\begin{align} \label{eqn:theta}\Theta_{[P - P^{\sigma^{-1}} ]} + \Theta_{[P^{\sigma^{-1}} - P^{\sigma^{-1}\tau^{-1}} ]} - \Theta_{[P - P^{\sigma^{-1}\tau^{-1}0} ]} - \Theta. \end{align}
	Here $\Theta_a= t_a \Theta = \Theta +a$ denotes translation of the theta divisor by an element $a$ in the Jacobian $\overline{E}$. By definition, equation \ref{eqn:theta} describes the image of the cocycle of $C_f$ under the map in sequence \ref{eq:H1toH2}. 
\end{proof}

We now proceed with our investigation of the image under $\kappa$ of a Brauer class that is invariant under complex multiplication. Let $E$ be an elliptic curve over $k$ with $j$-invariant $0$. The automorphism group of $E$ is $\ZZ/6\ZZ$ by \cite[III Theorem 10.1]{silverman}. Denote the order $3$ automorphism  of $E$ by $\theta$. Note that if $E$ is given by the affine equation $y^2= x^3 + \frac{1}{4} \Delta$, then $\theta$ is the morphism defined by $\theta(x,y) = (\omega x, y)$. The fixed points of $\theta$ form the subgroup $\mathcal{T} = \left\{ 0, \left(0, \tfrac{1}{2} \sqrt{\Delta}\right), \left(0, -\tfrac{1}{2} \sqrt{\Delta}\right)\right\}$ of the $3$-torsion of $E$. Finally, denote by $\lambda = \theta - [1]$ the isogeny with kernel $\mathcal{T}$. Consider the exact sequence of $G= \Gal(\kbar/k)$-modules 
$$\xymatrix{ 0 \ar[r] & \mathcal{T} \ar[r] & E \ar[r]^\lambda \ar[r] & E \ar[r] & 0 }.$$
Applying group cohomology yields the exact sequence 
$$\xymatrix{0 \ar[r]& E(k)/\lambda E(k) \ar[r] & H^1\left( k, \mathcal{T}\right) \ar[r]^{\delta\phantom{ksjl}} & H^1\left( k, E(\kbar)\right)[\lambda] \ar[r] & 0,}$$
where $H^1\left( k, E(\kbar)\right)[\lambda]$ denotes the classes $[X]$ in the kernel of $\lambda_*:  H^1\left( k, E(\kbar)\right) \rightarrow H^1\left( k, \mathcal{T}\right)$. By \Cref{prop:torsor-covering}, every such $X$ is in the image of the map $\delta$. By \cite[Theorem 6.1]{Haile-Wadsworth-Han:Cyclic-Twists}, the image of this map $\delta$ is exactly described by principal homogeneous spaces of the form $w^3-f(u,v)$. As a $3$-torsion element in $H^1\left(k, E(\kbar)\right)$ is invariant under $\theta$ if and only if  it is in trivial under $\lambda_*$, the statement follows from a computational result of Haile, Han, and Wadsworth \cite[Theorem 6.1]{Haile-Wadsworth-Han:Cyclic-Twists}. 
We will prove the statement geometrically for completion.

\begin{lemma}\label{lemma:kappa-alpha-is-binary-cubic}
	Every $3$-torsion element in $H^1(k, E(\kbar))$ that is invariant under $\theta$ can be described as the principal homogeneous space given by a curve $C_f : w^3-f(u,v)$ for some nondegenerate binary cubic form $f$ so that the Jacobian of $C_f$ is isomorphic to $E$. 
\end{lemma}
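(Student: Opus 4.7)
The plan is to realize every $\theta$-invariant $3$-torsion class in $H^1(k, E(\kbar))$ as a smooth plane cubic carrying a $k$-rational order-$3$ automorphism, and then invoke \Cref{prop:X-with-auto} to put it in the form $V(w^3 - f(u,v))$. First I would identify the $\theta$-invariant classes with $\ker(\lambda_*)$: the condition $\theta_* \xi = \xi$ is equivalent to $\lambda_* \xi = 0$ since $\lambda = \theta - [1]$, and because $\bar\lambda \lambda = [3]$ in $\End(E) \cong \mathbb{Z}[\omega]$, any class in this kernel is automatically $3$-torsion. By the displayed exact sequence preceding the lemma, $\xi = \delta(\tau)$ for some $\tau \in H^1(k, \mathcal{T})$; choosing a cocycle representative $\sigma \mapsto t_\sigma$, the associated torsor $X$ is the scheme $E_{\kbar}$ equipped with the twisted Galois action $\sigma \cdot_X P = t_\sigma + \sigma(P)$.

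The key step is to descend $\theta$ to a $k$-rational automorphism of $X$. The crucial observation is that $\mathcal{T}$ is precisely the fixed locus of $\theta$ on $E$: since $\theta$ acts by $(x,y) \mapsto (\omega x, y)$ on the model $y^2 = x^3 + \tfrac14 \Delta$, its fixed points are exactly $\{0, (0, \pm \tfrac12 \sqrt{\Delta})\} = \mathcal{T}$. Hence $\theta$ fixes $\mathcal{T}$ pointwise, and a direct check gives
\[
\theta(\sigma \cdot_X P) = \theta(t_\sigma) + \theta(\sigma(P)) = t_\sigma + \sigma(\theta(P)) = \sigma \cdot_X \theta(P),
\]
so that $\theta$ commutes with the twisted Galois action and descends to a $k$-rational order-$3$ automorphism $\theta_X$ of $X$.

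To embed $X$ as a plane cubic, I would exploit the fixed-point divisor of $\theta_X$. Riemann--Hurwitz applied to the quotient $X \to X/\langle \theta_X \rangle \cong \mathbb{P}^1_{\kbar}$ shows that $\theta_X$ has exactly three geometric fixed points, and these assemble into a $k$-rational effective divisor $D$ of degree $3$ on $X$. The line bundle $\mathcal{O}_X(D)$ is thus a genuine $k$-rational line bundle of degree $3 > 2g$, hence very ample, embedding $X$ as a smooth plane cubic inside $\mathbb{P}(H^0(X, \mathcal{O}_X(D))) \cong \mathbb{P}^2$. Applying \Cref{prop:X-with-auto} to the pair $(X, \theta_X)$ then produces a nondegenerate binary cubic form $f$ with $X \cong V(w^3 - f(u,v))$, and $\Jac(X) \cong E$ by construction as an $E$-torsor.

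The main technical subtlety is the descent of $\theta$ to $X$; this rests entirely on the fortunate coincidence $\mathcal{T} = \operatorname{Fix}(\theta)$, itself a special feature of the complex multiplication on a $j$-invariant $0$ curve together with the choice of isogeny $\lambda = \theta - [1]$. Once $\theta_X$ is in hand, the fixed-point divisor elegantly sidesteps any potential Brauer-theoretic obstruction to producing a $k$-rational degree-$3$ line bundle, and \Cref{prop:X-with-auto} supplies the final geometric identification.
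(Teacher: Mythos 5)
Your proof is correct and follows the same overall route as the paper's: equip the torsor $X$ with a $k$-rational order-$3$ automorphism coming from $\theta$, and then invoke \Cref{prop:X-with-auto}. The paper's proof, however, is quite terse: it asserts that the canonical map $X \to \theta_*(X) \cong X$ gives an order-$3$ automorphism and then immediately applies \Cref{prop:X-with-auto} — which requires $X$ to already sit inside $\mathbb{P}^2$ — without explaining either why the composite has order exactly $3$ (the isomorphism $\theta_* X \cong X$ is a priori only defined up to translation) or how one obtains a $k$-rational degree-$3$ embedding of the torsor $X$. Your version fills both gaps: the explicit cocycle description $\sigma \cdot_X P = t_\sigma + \sigma(P)$ with $t_\sigma \in \mathcal{T} = \operatorname{Fix}(\theta)$ makes it clear that $\theta$ itself descends to $X$ with order $3$, and the fixed-point locus of $\theta_X$ gives a $k$-rational effective degree-$3$ divisor $D$, so that $\mathcal{O}_X(D)$ furnishes the needed embedding into $\mathbb{P}^2$. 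That construction of $D$ is an essential step — in general a period-$3$ torsor need not have index $3$ — and it is precisely the $\theta$-invariance, via $\mathcal{T}$, that produces it. One small economy: you don't actually need Riemann--Hurwitz to count fixed points, since you already know the geometric fixed locus of $\theta_X$ is $\mathcal{T}$, which has exactly three points.
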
 

\begin{proof} 
	Let $X$ be a $3$-torsion element in $H^1(k, E(\kbar))$  that is invariant under $\theta$. The canoncial map $X \rightarrow \theta_*(X) \cong X$ gives an order $3$ automorphism $\sigma$ of $X$. As seen in \Cref{prop:X-with-auto}, this implies that $X$ is of the form $w^3-f(u,v)$ for some nondegenerate binary cubic form $f$. 	
\end{proof} 

%
%

\begin{proof}[{Proof of \Cref{thm:Moduli-space}}]
We recall the bijection from the beginning of this section. Let $f$ be a nondegerate binary cubic form. Let $C_f$ be the cubic planar curve $w^3-f(u,v)$ and let $E_f$ be the Jacobian of $C_f$. Furthermore, let $\alpha_f \in \prescript{}{3}{\Br(E_f)}$ be the Brauer class on $E_f$ associated to the Clifford algebra. For another binary cubic form $g$ in the same orbit as $f$ with respect to the action of $\GL_2(k)$, define $C_g, E_g,$ and $\alpha_g$ respectively. Then there is some $\mathfrak{g} \in \GL_2(k)$ so that $\mathfrak{g}.f = g$, where we recall that $\mathfrak{g}$ acts on the coefficients of $f$. By \cite{Haile:On-the-Clifford-Algebra-of-a-binary-cubic-form} the elliptic curve $E_f$ is defined by the equation 
	$$s_f^2 = \gamma_f^3 + \frac14\Delta_f$$ so that by our computations in the previous section $E_g$ is given by $$\det(\mathfrak{g})^6 s_f^2 = \det(\mathfrak{g})^6 \gamma_f^3 + \frac14 \det(\mathfrak{g})^6 \Delta_f.$$ This implies by \cite[Section III.1]{silverman} that $E_f \cong E_g$ over $k$. Furthermore, for any  $\mathfrak{g} = \begin{pmatrix} a& b \\ c & d \end{pmatrix}$, the action $\mathfrak{g}.x = ax + cy$ and $\mathfrak{g}.y = bx + dy$ defines an isomorphism from the Clifford algebra of $f$ to the Clifford algebra of $g$. On the center this action takes the coordinate ring of $E_f$ to the coordinate ring of $E_g$. Hence, the equivalence class of $(E_f, \alpha_f)$ is well-defined for an orbit under the action of $\GL_2(k)$. \\
	
	On the other hand, consider a pair $(E,\alpha)$ as in the statement of the theorem. Consider the image of $\alpha$ under $\kappa$. By \Cref{lemma:kappa-alpha-is-binary-cubic} there is some binary cubic form $f$ so that $\kappa(\alpha)$ is represented by the principal homogeneous space given by $C_f$. Note that we get a well-defined class modulo the action of $\GL_2(k)$. 	Suppose that $E$ is isomorphic to $E'$ via the isomorphism $\phi: E' \rightarrow E$. Suppose that $g$ is constructed in a similar way from $E'$. By naturality of the spectral sequence, the following diagram commutes
	$$\xymatrix{
		0 \ar[r] &  \Br(k) \ar@{=}[d] \ar[r] & \Br(E) \ar[d]^{\phi^*} \ar[r]& H^1\left( k, E(\kbar)\right) \ar[d]^{\phi^*} \ar[r] &0 \\
		0 \ar[r] &  \Br(k) \ar[r] & \Br(E') \ar[r]& H^1\left( k, E'(\kbar)\right)\ar[r] &0 }.
	$$
	In conclusion, the equivalence class of $(E,\alpha)$ gives us a well-defined orbit in $U$ modulo $\GL_2(k)$. \\
	
	Finally, start with an orbit of $f$ and let $(E_f,\alpha_f)$ be as above. By \cref{lemma:kappa-alpha-f-is-C-f}, the orbit of binary cubic forms associated to the pair $(E_f,\alpha_f)$ is again $f$. On the other hand, start with a pair $(E, \alpha)$ and an associated orbit $f$ as in the second part of the proof. Then the Jacobian of $C_f$ is $E$ by definition. Furthemore, $\alpha$ is a lift of $C_f$ to $\Br E$ and so $\alpha$ and $\alpha_f$ describe the same element in $\Br(E)/\Br_0(E)$ by the short exact sequence in \cref{eq:SES}. 
\end{proof} 

\subsection{Descending the Brauer Class}

Let $U$, $\mathcal{C}$, and $J$ be as before. A Brauer class over $J$ descends to a Brauer class of the quotient stack $\left[J/\GL_2\right]$ if and only if it is $\GL_2$-equivariant \cite[Theorem 4.46]{FGA-Vistoli}. We finish this section by proving that the Brauer class $\alpha \in \Br(J)$ associated to the binary cubic generic Clifford algebra descends. 

\begin{prop}\label{prop:descend-alpha} 
	The Brauer class of $\alpha$ descends to the quotient stack $\left[J/\GL_2\right] \rightarrow \left[ U/\GL_2\right]$. 
\end{prop}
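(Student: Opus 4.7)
The plan is to invoke the equivariance criterion quoted immediately above the statement: $\alpha \in \Br(J)$ descends to $[J/\GL_2]$ if and only if it carries a $\GL_2$-equivariant structure. Rather than building such a structure on the Brauer class $\alpha$ directly, I would exhibit one on the Azumaya representative $A_\Delta$, which comes essentially for free from the linear $\GL_2$-action on the algebra $A$ itself. This reduces descent to a compatibility check between two actions on $J$ that have already been written down in the paper.

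The first step is to observe that the $\GL_2$-action on $V = kx \oplus ky$ extends canonically to the tensor algebra $TV$, and that the defining two-sided ideal $\langle [u, v^3] : u, v \in V \rangle$ of $A = A_{2,3}$ is preserved, because the generating set $\{[u, v^3] : u, v \in V\}$ is itself $\GL_2$-stable. Hence $\GL_2$ acts on $A$ by $k$-algebra automorphisms, and in particular on the center $Z(A)$. Next I would identify this algebraic action on $\Spec Z_\Delta = J$ with the geometric $\GL_2$-action on $J$ used to form $[J/\GL_2]$: by the explicit description of $Z$ in \Cref{thm:center-of-A} together with the transformation matrix in \cref{matrixaction} and the scalings of $\gamma$ and $s$ computed in \Cref{sec:Action-on-U}, the two actions agree on the generators $x^3, \alpha, \beta, y^3, \gamma, s$ of $Z$, and hence coincide. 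With this identification in hand, each $\mathfrak{g} \in \GL_2(k)$ furnishes a canonical isomorphism of Azumaya algebras
\begin{equation*}
\Psi_\mathfrak{g} : \sigma_\mathfrak{g}^* A_\Delta \xrightarrow{\sim} A_\Delta
\end{equation*}
on $J$, and the cocycle relating $\Psi_{\mathfrak{g}\mathfrak{h}}$ to $\Psi_\mathfrak{g}$ and $\Psi_\mathfrak{h}$ is automatic from the fact that $\GL_2 \to \mathrm{Aut}_k(A)$ is a homomorphism of groups.

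This data is precisely a $\GL_2$-equivariant structure on the Azumaya algebra $A_\Delta$, which descends to a $\GL_2$-equivariant structure on the Brauer class $\alpha$, and the criterion \cite[Theorem 4.46]{FGA-Vistoli} then produces the descended class on $[J/\GL_2]$. The one step that needs genuine care — more bookkeeping than obstacle — is verifying that the algebraic $\GL_2$-action on $J$ obtained from automorphisms of $A$ really is the same geometric $\GL_2$-action used to define the quotient stack; once this is checked on the explicit generators of $Z$, everything else is formal, precisely because the equivariance is built at the level of the algebra $A$ itself rather than merely at the level of Brauer cohomology.
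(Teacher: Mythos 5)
Your proposal is correct and closely parallels the paper's underlying idea, but it is more systematic and, in one respect, more careful than what the paper actually writes. The paper's proof is a terse orbit-by-orbit argument: it takes two forms $f = \mathfrak{g}.g$ in the same orbit, notes that $J_f \cong J_g$ over $k$, and asserts that this isomorphism identifies $\alpha_f$ and $\alpha_g$, concluding $\GL_2$-equivariance. The cocycle condition and the fact that these fiberwise identifications come from a single isomorphism over $J$ are left implicit. You instead build the equivariance once and for all at the level of the Azumaya algebra: the linear $\GL_2$-action on $V$ manifestly preserves the generating set $\{[u,v^3]\}$ of the defining ideal, so $\GL_2 \to \mathrm{Aut}_k(A)$ is a group homomorphism, and pushing this through the center gives the isomorphisms $\Psi_{\mathfrak{g}} : \sigma_{\mathfrak{g}}^* A_\Delta \to A_\Delta$ together with the cocycle condition for free. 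That is a genuine gain in transparency over the paper's phrasing.

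One citation in your argument is slightly off, though, and it points at exactly the check you already flag as necessary. The scalings $\mathfrak{g}.\gamma = (\det\mathfrak{g})^2\gamma$ and $\mathfrak{g}.s = (\det\mathfrak{g})^3 s$ are computed in the paper for the \emph{geometric} action on $J$ (the one coming from the $\GL_2$-action on $\mathcal{C}$, in the subsection on the action on $J$), not for the algebraic action on $Z(A)$ you have just defined. So you cannot cite those scalings as evidence that the two actions coincide; you have to re-derive them inside $A$ by applying $x \mapsto ax+cy$, $y \mapsto bx+dy$ to $\gamma = (xy)^2 - y^2x^2$ and to $\delta^3$ (using the relations $x^3y = yx^3$, $xy^3 = y^3x$, etc.). This does come out right — the relations of $A$ are exactly what make the extra terms cancel so that $\gamma$ transforms by $(\det\mathfrak{g})^2$ — but it is a computation, not a citation, and since the whole descent claim hinges on the algebraic $\GL_2$-action on $\Spec Z_\Delta$ agreeing with the geometric action used to define $[J/\GL_2]$, that computation is the real content of the proof and should be written out or at least reduced to the functoriality of the Poincaré bundle isomorphism $\psi : A_\Delta \to \mathcal{A} \otimes S$.
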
 

\begin{proof} 
	Let $f$ and $g$ be nondegenerate binary cubic forms and $\mathfrak{g} \in \GL_2(k)$ so that $f = \mathfrak{g}.g$. Then the Jacobians of $C_f$ and $C_g$ are isomorphic over $k$ and this isomorphism induces an identification of $\alpha_f$ and $\alpha_g$ in $\Br J_f$ and $\Br J_g$, respectively. Hence the Brauer class $\alpha$ is $\GL_2$-equivariant and descends to the quotient. 
\end{proof} 

\appendix

\section{Coverings of Elliptic Curves}\label{appendix}

Let $k$ be a field of characteristic different from $2$ or $3$ and let $E$ be an elliptic curve $E$. Fix an automorphism $\lambda: E \rightarrow E$ of finite order $n$. Denote the kernel of $\lambda$ by $E[\lambda]$. Consider the exact sequence 
$$\xymatrix{ 0 \ar[r] & E[\lambda] \ar[r] & E(\kbar) \ar[r] & E(\kbar) \ar[r] & 0}.$$
This induces an exact sequence on Galois-cohomology 
$$\xymatrix{ 0 \ar[r]& E(k)/\lambda E(k) \ar[r] &H^1(k, E[\lambda]) \ar[r]^\delta & H^1(k, E(\kbar)) \ar[r] & H^1(k, E(\kbar))\ar[r] & \cdots}.$$
In our proof of the correspondence of moduli spaces, the image of the map $\delta$ will play a crucial role. We will describe elements in the image using a generalization of $n$-coverings which were introduced by Cassels in \cite{Cassels}. Such a notion can be made in more generality for abelian varieties.

\begin{defn} 
	A \emph{$\lambda$-covering} is a pair $(X,\psi)$, where $X$ is a $k$-torsor under $E$, and $\psi: X \rightarrow E$ is a morphism satisfying 
	$$\psi(P+x) = \lambda(P) + \psi(x)$$
	for all $P \in E(\kbar)$ and $x \in X(\kbar)$. That is, the following diagram commutes
	$$\xymatrix{ 
		E \times X \ar[d]^{+} \ar[r]^{(\lambda, \psi)} & E \times E \ar[d]^+ \\
		X \ar[r]^\psi & E
	}.$$
	Let $(X,\psi)$ and $(Y,\phi)$ be $\lambda$-coverings. A morphism $(X,\psi) \rightarrow (Y,\phi)$ is given by a morphism $\eta: X \rightarrow Y$ such that the following diagram commutes: 
	$$
	\xymatrix{
		& E\times Y\ar[rd]^{(\lambda,\phi)}] \ar[dd]|(.35)\hole|(0.4)\hole|(0.45)\hole|(0.5)\hole|(0.55)\hole^(0.7){+} \\
		E \times X \ar[ru]^{(\id,\eta)} \ar[dd]^+ \ar[rr]^{(\lambda,\psi)} && E\times E \ar[dd]^{+}	\\
		& Y \ar[rd]^\phi \\
		X \ar[rr]^\psi \ar[ru]^\eta && E	
	}.
	$$
\end{defn} 

\begin{prop}\label{prop:torsor-covering}
	A $k$-torsor $X$ can be endowed with the structure of a $\lambda$-covering if and only if its class in $H^1(k,E(\overline{k}))$ is trivial under the push-forward $\lambda_*$. In particular, there is a one-to-one correspondence between isomorphism classes of $\lambda$-coverings and the image of $\delta$. \end{prop}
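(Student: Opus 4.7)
The plan is to translate the covering structure into cocycle data. Fix a base point $x_0 \in X(\bar k)$ and use the $E$-torsor structure to identify $X(\bar k) \cong E(\bar k)$ by $P \mapsto P + x_0$. With this identification, the class $[X] \in H^1(k, E(\bar k))$ is represented by the cocycle $c_\sigma = \sigma(x_0) - x_0$, and $\lambda_*[X]$ is represented by $\sigma \mapsto \lambda(c_\sigma)$.

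For the forward direction, assume a $\lambda$-covering structure $\psi \colon X \to E$ (defined over $k$) exists and set $Q = \psi(x_0) \in E(\bar k)$. The equivariance identity $\psi(P + x_0) = \lambda(P) + Q$ holds for all $P \in E(\bar k)$. Applying $\sigma \in \Gal(\bar k/k)$ and using that $\psi$ is defined over $k$, I would obtain
\[
\sigma(Q) = \sigma(\psi(x_0)) = \psi(\sigma(x_0)) = \psi(c_\sigma + x_0) = \lambda(c_\sigma) + Q,
\]
so $\lambda(c_\sigma) = \sigma(Q) - Q$ is a coboundary and hence $\lambda_*[X] = 0$.

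For the reverse direction, assume $\lambda_*[X] = 0$. Then there exists $Q \in E(\bar k)$ with $\lambda(c_\sigma) = \sigma(Q) - Q$. Define $\psi \colon X \to E$ by $\psi(P + x_0) = \lambda(P) + Q$. The $E$-equivariance $\psi(R + x) = \lambda(R) + \psi(x)$ is immediate from the formula. The step I expect to require the most care is verifying that $\psi$ is defined over $k$: a direct computation shows
\[
\sigma(\psi(P + x_0)) - \psi(\sigma(P + x_0)) = \sigma(Q) - Q - \lambda(c_\sigma),
\]
which vanishes precisely by the defining property of $Q$. So $\psi$ is Galois equivariant and hence descends to a $k$-morphism. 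Changing the base point $x_0$ modifies $Q$ by a coboundary and hence changes $\psi$ by a translation; this shows the construction is canonical up to the ambiguity measured by $E(k)/\lambda E(k)$, which is exactly the kernel of $\delta$.

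Finally, the ``in particular'' claim follows by combining the above with the exact sequence: the image of $\delta$ coincides with $\ker \lambda_*$ on $H^1(k, E(\bar k))$, which is exactly the set of torsor classes that can be lifted to a $\lambda$-covering. Tracing isomorphisms of $\lambda$-coverings $(X,\psi) \to (Y,\phi)$ shows that any such morphism $\eta$ is in particular a torsor isomorphism (from commutativity of the left and back faces of the cube in the definition), so forgetting $\psi$ gives a well-defined map to $H^1(k, E(\bar k))$ with image equal to $\ker \lambda_* = \mathrm{im}\,\delta$; the $E(k)$-ambiguity in the choice of $Q$ matches the indeterminacy in lifting along $\delta$, yielding the stated bijection.
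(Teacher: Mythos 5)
Your proof is correct and genuinely different in method from the one in the paper. The paper proves both directions at the level of torsors: it realizes $\lambda_*X$ as the explicit quotient $(X\times_k E)/E$ (with the twisted $E$-action $P.(x,Q)=(-P+x,\lambda(P)+Q)$), observes that the graph-like subset $\{(x,-\psi(x))\}$ is a Galois-stable orbit and hence a $k$-point trivializing $\lambda_*X$, and conversely that once $\lambda_*X\cong E$ the canonical projection $X\to\lambda_*X$ is itself a $\lambda$-covering. You instead work entirely at the cocycle level: after fixing a geometric base point $x_0$, you express everything in terms of $c_\sigma=\sigma(x_0)-x_0$, and the whole argument reduces to the identity $\lambda(c_\sigma)=\sigma(Q)-Q$, with the forward direction showing $Q=\psi(x_0)$ is a witness and the reverse direction constructing $\psi$ directly from such a $Q$ and checking Galois equivariance. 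Your approach is more elementary and makes the role of $E(k)/\lambda E(k)$ completely explicit; the paper's push-forward construction is closer to \cite[Proposition 3.3.4]{Skorobogatov}, stays coordinate-free, and generalizes more directly to non-split situations where choosing $x_0\in X(\bar k)$ and computing cocycles is less convenient. One caveat that applies equally to both proofs: the precise form of the ``in particular'' clause is delicate, since the forgetful map from isomorphism classes of $\lambda$-coverings to torsor classes in $\mathrm{im}\,\delta$ has fibers of size $|E(k)/\lambda E(k)|$ (the natural bijection is with $H^1(k,E[\lambda])$, via $(X,\psi)\mapsto[\psi^{-1}(0)]$); you at least flag this ambiguity, whereas the paper passes over it silently.
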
 

\begin{proof}The proof is analogous to the proof of \cite[Proposition 3.3.4]{Skorobogatov}. 
	We review it here for completion. Let $(X, \psi)$ be a $\lambda$-covering of $E$ and consider the push-forward $\lambda_*X$. This push-forward is defined to be  
	$$\lambda_*(X) = \left( X\times_k E \right)/E,$$
where the action of $E$ is given by 
	$$P.(x,Q) = \left( - (P+x), \lambda(P) + Q\right),$$
	for $P,Q \in E(\overline{k})$ and $x \in X(\overline{k})$. Notice that 
	$$P.\left(x,-\psi(x)\right) = \left( -(P + x), \lambda(P) - \psi(x) \right) = \left( - (P+x), \psi\left( P+x\right)\right).$$
	Thus the set $\left( \text{Id}, - \psi\right) \subseteq X \times_k E$ defines a $k$-point on the quotient so that the torsor $\lambda_*(X)$ is trivial. On the other hand, when $\lambda_*(X) =0$, then by definition $\lambda_*(X) \cong E$. The canonical map $\psi: X \rightarrow \lambda_*(X)$ defines a $\lambda$-covering. 
\end{proof}

\bibliography{newbib}
\bibliographystyle{plain}

\end{document}